\documentclass[12pt]{amsart}
\usepackage{amssymb}
\usepackage{amsbsy}
\usepackage{amscd}
\usepackage{enumerate}
\usepackage{mathrsfs}
\usepackage{eucal}
\usepackage{mathtools}
\usepackage[hidelinks,unicode]{hyperref}

\numberwithin{equation}{section}
\theoremstyle{plain}
\newtheorem{theorem}{Theorem}[section]
\newtheorem{lemma}[theorem]{Lemma}
\newtheorem{proposition}[theorem]{Proposition}

\theoremstyle{definition}
\newtheorem{definition}[theorem]{Definition}
\newtheorem*{definition*}{Definition}
\theoremstyle{remark}

\newtheorem*{remark*}{Remark}

\newcommand{\Y}{Y_{\hbar}}
\newcommand{\g}{\mathfrak g}
\newcommand{\sfr}{\mathfrak s}
\newcommand{\Tplus}{\mathfrak T_+}
\newcommand{\Bbd}{B^{\mathrm{bd}}}

\newcommand{\gU}{\widehat{\mathfrak g}_U}

\newcommand{\Id}{\operatorname{Id}}
\newcommand{\str}{\operatorname{str}}
\newcommand{\ev}{\operatorname{ev}}

\DeclareUnicodeCharacter{2013}{--}
\DeclareUnicodeCharacter{2014}{---}
\DeclareUnicodeCharacter{2019}{'}

\title[PBW theorem for affine super Yangian]{PBW theorem for affine super Yangian}
\author{Vasiliy Volkov}
\address{Center of Fundamental Mathematics, Moscow Institute of Physics and Technology, Dolgoprudny, Russia}
\email{volkov.vd@phystech.edu}
\author{Vladimir Stukopin}
\address{Moscow Institute of Physics and Technology, Dolgoprudny, Russia; South Mathematical Institute, Vladikavkaz, Russia}
\email{stukopin.va@phystech.edu}
\date{}
\hypersetup{pdftitle={PBW theorem for affine super Yangian},pdfauthor={Vasiliy Volkov and Vladimir Stukopin}}

\begin{document}
\maketitle
\tableofcontents

\section*{Introduction}\label{sec:introduction}

Yangians were introduced by Drinfeld as Hopf algebra quantizations of polynomial current Lie bialgebras \cite{Drinfeld85,Drinfeld88}. PBW theorems identify the classical algebras underlying these quantizations and provide a basis for their representation theory. Levendorskii developed a filtered approach to PBW bases \cite{Levendorskii}. The superalgebraic theory was developed, in particular, in the works of Nazarov and Stukopin \cite{Nazarov91,Stukopin94}. Standard accounts of Yangians and quantum groups can be found in the monographs of Molev and Chari--Pressley \cite{Molev,ChariPressley}.

Affine Yangians involve a further loop direction and require attention to central extensions and completions. The early construction of Boyarchenko and Levendorskii \cite{BoyarchenkoLevendorskii} and the completed coproducts of Guay, Nakajima, and Wendlandt \cite{GNW} form part of this development. Ueda constructed the affine super Yangian and its evaluation homomorphism \cite{UedaConstruction,UedaSurjectivity}. For the PBW problem considered here, the relevant classical object is the universal central extension of a two-variable current Lie superalgebra, including its differential-form central directions.

For finite-type super Yangians, Gow proves a PBW theorem for $Y(\mathfrak{gl}_{m|n})$ and relates its RTT and Drinfeld descriptions through Gauss decomposition \cite{Gow}. The proof separates spanning from linear independence and uses iterated coproducts followed by evaluation homomorphisms. In the ordinary affine setting, Guay, Regelskis, and Wendlandt obtain a PBW theorem for simply laced affine Yangians from vertex operator representations \cite{GRW}. These results illustrate complementary methods for controlling the classical limit. In the present affine super setting, the differential-form central directions and the degreewise completions are retained throughout the coefficientwise separation argument.

This paper proves a PBW theorem for Ueda's distinguished affine super Yangian with $m,n\ge2$, $m\ne n$, and $\varepsilon_1=\varepsilon_2=\frac12$, so that $\hbar=1$. The main result is the graded isomorphism
\[
\operatorname{gr}_F Y_\hbar\cong
U\bigl(\operatorname{uce}(\mathfrak{sl}(m|n)\otimes\mathbb C[z^{\pm1},u])\bigr).
\]
Consequently, any total order on a homogeneous classical basis, homogeneous also in current degree, yields a Yangian basis of ordered monomials in filtered lifts. Even basis elements admit arbitrary nonnegative powers, while odd basis elements occur with exponent zero or one.

The proof first identifies the classical presentation using universal central extensions and Steinberg Lie superalgebras \cite{IoharaKoga,ChenSun}. It then studies the leading symbols of Ueda's evaluation formulas in a completed enveloping algebra. A normal PBW order in the evaluation target gives locally finite coefficients and the continuity needed for the completed calculations. The resulting injective Lie map, together with iterated coproducts and joint separation by tensor powers, proves injectivity of the canonical graded homomorphism. Section~\ref{sec:classical-presentation} establishes the classical identification; Sections~\ref{sec:evaluation-map} and~\ref{sec:coproduct} carry out the evaluation and separation arguments. Appendix~\ref{app:order} chooses an explicit order for $\widehat{\mathfrak{sl}}(3|2)$ and illustrates reordering, central corrections, filtered lifts, and tensor separation.

The logical steps of the isomorphism proof can be located as follows. Surjectivity of the canonical map $\Phi$ is proved in Proposition~\ref{prop:graded-surjection}; after the classical identification of Proposition~\ref{prop:classical-presentation-isomorphism}, this is the map \eqref{eq:canonical-pbw-surjection}. Its injectivity is proved in the concluding proof of Theorem~\ref{thm:pbw} in Section~\ref{sec:coproduct}, using Proposition~\ref{prop:evaluation-leading-symbols} and Lemmas~\ref{lem:iterated-evaluation-symbols} and~\ref{lem:tensor-power-separation}. The final two paragraphs of that proof establish, respectively, completeness and linear independence of the ordered filtered lifts. Here completeness means algebraic spanning by finite linear combinations. The auxiliary classical map $\rho$ is proved surjective in Lemma~\ref{lem:classical-map-extension} and injective in Proposition~\ref{prop:classical-presentation-isomorphism}.

\section{Notation and statement}\label{sec:notation-and-statement}
Let $m,n\ge2$, $m\ne n$, $N=m+n$, and $I=\mathbb Z/N\mathbb Z$, represented by $0,1,\ldots,N-1$. Put
\begin{equation}\label{eq:distinguished-parity-sequence}
p(a)=\begin{cases}
0,&1\le a\le m,\\
1,&m<a\le N,
\end{cases}
\qquad p(0)=p(N)=1,
\end{equation}
and use the cyclic Cartan matrix
\begin{equation}\label{eq:cyclic-cartan-matrix}
a_{ij}=\begin{cases}
(-1)^{p(i)}+(-1)^{p(i+1)},&i=j,\\
-(-1)^{p(i+1)},&j=i+1,\\
-(-1)^{p(i)},&j=i-1,\\
1,&(i,j)=(0,N-1)\text{ or }(N-1,0),\\
0,&\text{otherwise}.
\end{cases}
\end{equation}
The second and third cases in \eqref{eq:cyclic-cartan-matrix} exclude the wrap-around pairs. We use Ueda's convention
\[
\hbar=\varepsilon_1+\varepsilon_2
\]
and, on the equal-parameter line, write
\begin{equation}\label{eq:equal-parameter-yangian}
Y_\hbar:=Y_{\hbar/2,\hbar/2}\bigl(\widehat{\mathfrak{sl}}(m|n)\bigr).
\end{equation}
Throughout the proof $\hbar=1$. Thus \eqref{eq:equal-parameter-yangian} is precisely Ueda's algebra with $\varepsilon_1=\varepsilon_2=\frac12$. The relation with the polynomial parameter family and its other specializations is explained after Theorem~\ref{thm:pbw}.

The bracket is the supercommutator and $\{x,y\}=xy+yx$ is the ordinary anticommutator. The generators $x^\pm_{0,r}$ and $x^\pm_{m,r}$ are odd, and all other standard generators are even. The defining difference relations on the equal-parameter line are
\begin{equation}\label{eq:cartan-current-difference-relation}
[h_{i,r+1},x^\pm_{j,s}]-[h_{i,r},x^\pm_{j,s+1}]
=\pm\frac{\hbar a_{ij}}2\{h_{i,r},x^\pm_{j,s}\},
\end{equation}
\begin{equation}\label{eq:root-current-difference-relation}
[x^\pm_{i,r+1},x^\pm_{j,s}]-[x^\pm_{i,r},x^\pm_{j,s+1}]
=\pm\frac{\hbar a_{ij}}2\{x^\pm_{i,r},x^\pm_{j,s}\}.
\end{equation}
In the present specialization the coefficient on the right is $a_{ij}/2$. The remaining relations are
\begin{equation}\label{eq:yangian-basic-relations}
\begin{aligned}
[h_{i,r},h_{j,s}]&=0,\\
[x^+_{i,r},x^-_{j,s}]&=\delta_{ij}h_{i,r+s},\\
[h_{i,0},x^\pm_{j,s}]&=\pm a_{ij}x^\pm_{j,s}.
\end{aligned}
\end{equation}
together with the symmetrized Serre relations
\begin{equation}\label{eq:yangian-serre-relations}
\sum_{\pi\in\mathfrak S_q}
[x^\pm_{i,r_{\pi(1)}},[x^\pm_{i,r_{\pi(2)}},\ldots,[x^\pm_{i,r_{\pi(q)}},x^\pm_{j,s}]\ldots]]=0,
\quad q=1+|a_{ij}|,\quad i\ne j,
\end{equation}
the isotropic relations $[x^\pm_{i,r},x^\pm_{i,s}]=0$ for $i=0,m$, and the two quartic odd relations
\begin{equation}\label{eq:yangian-quartic-odd-relations}
[[x^\pm_{i-1,r},x^\pm_{i,0}],[x^\pm_{i,0},x^\pm_{i+1,s}]]=0,
\qquad i=0,m.
\end{equation}
Equip $Y_\hbar$ with the increasing current filtration $F_\bullet Y_\hbar$ defined by the generator weights
\begin{equation}\label{eq:current-filtration-weights}
\deg_F x^\pm_{i,r}=r,\qquad \deg_F h_{i,r}=r.
\end{equation}
Explicitly, for $p\ge0$, $F_pY_\hbar$ is the $\mathbb C$-linear span of all finite words in the generators $x^\pm_{i,r}$ and $h_{i,r}$ whose second indices have sum at most $p$, including the empty word $1$. Set $F_pY_\hbar=0$ for $p<0$. This is an exhaustive multiplicative filtration: $F_pY_\hbar\,F_qY_\hbar\subseteq F_{p+q}Y_\hbar$, and
\[
\operatorname{gr}_F Y_\hbar=\bigoplus_{p\ge0}F_pY_\hbar/F_{p-1}Y_\hbar.
\]
In accordance with Ueda's presentation, the elements $x^\pm_{i,r}$ and $h_{i,r}$ will be called the Yangian generators; their second index is their assigned current-filtration weight. Later, for an element $X$ of a matrix Lie superalgebra, the notation $X(a)=X\otimes t^a$ will refer to a loop element of loop degree $a$. Thus the generator index $r$ and the loop degree $a$ are distinct notions throughout this proof. Let
\begin{equation}\label{eq:classical-toroidal-algebra}
\mathfrak g=\mathfrak{sl}(m|n),\qquad A_+=\mathbb C[z^{\pm1},u],\qquad
\mathfrak T_+=\operatorname{uce}(\mathfrak g\otimes A_+).
\end{equation}
The variable $u$ records current degree; $z$ is the affine loop variable (the variable denoted by $t$ in Ueda's loop realization). These two variables have different origins. The affine node already belongs to the loop algebra and contributes $z$ or $z^{-1}$, whereas the generator index $r$ contributes $u^r$. Indeed, after passing to the associated graded, the difference relations become
\begin{equation}\label{eq:graded-current-difference-relations}
\begin{aligned}
[H_{i,r+1},E^\pm_{j,s}]&=[H_{i,r},E^\pm_{j,s+1}],\\
[E^\pm_{i,r+1},E^\pm_{j,s}]&=[E^\pm_{i,r},E^\pm_{j,s+1}],
\end{aligned}
\end{equation}
which is precisely the identity obtained from $u^{r+1}u^s=u^ru^{s+1}$. Thus $u$ is not a deformation parameter; the deformation parameter is $\hbar$, while $u$ records the current degree. The current-degree-zero part of $\mathfrak T_+$ will be identified explicitly with Ueda's affine algebra in \eqref{eq:current-degree-zero-component}--\eqref{eq:current-degree-zero-identification}.

\begin{theorem}[PBW theorem]\label{thm:pbw}
Let $m,n\ge2$ with $m\ne n$, and let
\[
Y_\hbar=Y_{1/2,1/2}\bigl(\widehat{\mathfrak{sl}}(m|n)\bigr)
\]
be the affine super Yangian in the distinguished cyclic presentation, so that $\hbar=1$. For $p\ge0$, let $F_pY_\hbar$ be spanned by words in $x^\pm_{i,r},h_{i,r}$ with sum of second indices at most $p$, including the empty word $1$, and set $F_pY_\hbar=0$ for $p<0$. This defines its current filtration. Let
\[
\mathfrak T_+=\operatorname{uce}\bigl(\mathfrak{sl}(m|n)\otimes\mathbb C[z^{\pm1},u]\bigr)
\]
be the universal central extension, graded by current degree $\deg u=1$, $\deg z=0$, with its induced grading on the centre.
\begin{enumerate}[(i)]
\item The associated graded superalgebra is canonically isomorphic to the enveloping superalgebra of this classical Lie superalgebra:
\begin{equation}\label{eq:pbw-graded-isomorphism}
\operatorname{gr}_F Y_\hbar\cong U\bigl(\operatorname{uce}(\mathfrak{sl}(m|n)\otimes\mathbb C[z^{\pm1},u])\bigr).
\end{equation}
\item Let $B$ be any totally ordered $\mathbb C$-basis of $\mathfrak T_+$ homogeneous in parity and current degree. For every $b\in B$ of current degree $r$, choose an arbitrary lift $\widehat b\in F_rY_\hbar$ whose principal symbol corresponds to $b$ under the isomorphism in part~\textup{(i)}, and fix all these choices. Then the distinct ordered supermonomials in these fixed lifts,
\begin{equation}\label{eq:ordered-filtered-lift-monomials}
\widehat b_1^{\,d_1}\cdots\widehat b_t^{\,d_t},\qquad b_1<\cdots<b_t,
\end{equation}
including the empty monomial $1$, form a $\mathbb C$-vector-space basis of $Y_\hbar$. Here $t\ge0$, $d_i\in\mathbb Z_{\ge0}$ when $b_i$ is even, and $d_i\in\{0,1\}$ when $b_i$ is odd.
\end{enumerate}
\end{theorem}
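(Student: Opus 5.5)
Part~(i) will be proved by constructing the canonical graded homomorphism $\Phi\colon U(\Tplus)\to\operatorname{gr}_F\Y$ and showing that it is bijective. Part~(ii) then follows formally.

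\emph{Surjectivity.} First I present $\Tplus$ by generators $H_{i,r},E^\pm_{i,r}$ ($i\in I$, $r\ge0$) subject to the graded relations \eqref{eq:graded-current-difference-relations} together with the $\hbar\to0$ limits of \eqref{eq:yangian-basic-relations}--\eqref{eq:yangian-quartic-odd-relations}. This presentation will come from the Steinberg Lie superalgebra description of $\operatorname{uce}(\mathfrak g\otimes A_+)$ in the spirit of Iohara--Koga and Chen--Sun. The hypotheses $m\ne n$ and $m,n\ge2$ are exactly what make $\mathfrak{st}$ centrally closed, so that the universal central extension is the Steinberg algebra. With this presentation in hand, there is a classical map $\rho$ from the presented Lie superalgebra onto $\Tplus$, and I prove that $\rho$ is an isomorphism.

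Next, the symbols $\bar x^\pm_{i,r},\bar h_{i,r}$ of the Yangian generators satisfy these relations in $\operatorname{gr}_F\Y$. The reason is that every right-hand side $\pm\frac{a_{ij}}2\{\cdot,\cdot\}$ lies in filtration degree $r+s<r+s+1$, so it vanishes in $\operatorname{gr}$. This produces $\Phi$. Since $\operatorname{gr}_F\Y$ is generated by these symbols, $\Phi$ is surjective.

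\emph{Injectivity.} This is the main obstacle. I plan to detect elements through a family of representations built from evaluation maps. Ueda's evaluation homomorphism $\ev\colon\Y\to\widehat U$ lands in a completed enveloping algebra of $\widehat{\mathfrak{gl}}(m|n)$. I will choose a normal PBW order there, so that the generator images become locally finite sums and products remain continuous. Then I check that $\ev$ shifts the current filtration compatibly, with $u$ corresponding to a degree-filtration variable on $\widehat U$. Its leading symbols then give a Lie map $\Tplus\to\widehat U$, or more precisely into the associated graded of the target.

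Composing with iterated coproducts $\Delta^{(k)}$ and with $\ev^{\otimes k}$ twisted by distinct shift parameters $c_1,\dots,c_k$ (via $\tau_c$), the leading symbol of a generator of current degree $r$ becomes $\sum_j c_j^r(\cdot)_j$ plus lower terms. This gives a map
\[
U(\Tplus)\to\textstyle\prod_k \operatorname{gr}\widehat U^{\otimes k}.
\]
The central directions, namely the differential forms $\Omega_{A_+}/dA_+$, must also be hit faithfully. These are the directions most likely to be killed, and I would control them through the cocycle terms carried by the central charge in $\widehat U$.

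Joint separation is then a Vandermonde-type argument. A nonzero element of $U(\Tplus)$ has a nonzero image under $\ev^{\otimes k}\circ\Delta^{(k)}$ for $k$ large and generic $c$. This is the standard injectivity of $U(\mathfrak a\otimes\mathbb C[u])\to\prod U(\mathfrak a)^{\otimes k}$ via $u\mapsto c_j$, combined with injectivity of the leading-symbol Lie map. I expect the delicate points to be:
\begin{enumerate}[(a)]
\item keeping the degreewise completions coefficientwise finite;
\item verifying that the leading-symbol map $\Tplus\to\widehat U$ is injective on the centre.
\end{enumerate}

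\emph{Part (ii).} Since $\Phi$ is an isomorphism, ordinary PBW for Lie superalgebras gives a basis of $\operatorname{gr}_F\Y$ consisting of ordered supermonomials in $B$. Each filtered lift monomial \eqref{eq:ordered-filtered-lift-monomials} lies in $F_{\sum d_ir_i}$ with exactly that symbol.

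Spanning follows by induction on $p$. An element of $F_p$ is matched in $\operatorname{gr}_p$ by a combination of lift monomials, and the remainder lies in $F_{p-1}$.

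Independence follows by taking the top filtration degree in any nontrivial relation. Its symbols would give a nontrivial relation among PBW monomials in $\operatorname{gr}_F\Y$, which is a contradiction.
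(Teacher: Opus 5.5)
There is a genuine gap in the injectivity step. Your separation mechanism uses spectral shifts $\tau_{c_j}$ at distinct points and a Vandermonde argument modelled on $U(\mathfrak a\otimes\mathbb C[u])\to\prod_k U(\mathfrak a)^{\otimes k}$. This treats $\Tplus$ as $\widehat{\mathfrak g}_{\mathrm U}\otimes\mathbb C[u]$, which it is not.

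A leading symbol of the form $\sum_j c_j^r(\cdot)_j$ is the current-degree-zero evaluation composed with $u\mapsto c_j$ in the $j$-th factor. That map factors through the quotient $\widehat{\mathfrak g}_{\mathrm U}\otimes\mathbb C[u]$ of $\Tplus$, whose cocycle is $\operatorname{str}(XY)\operatorname{Res}_z(g\,\partial_z f)\,c$. The kernel of $\Tplus\to\widehat{\mathfrak g}_{\mathrm U}\otimes\mathbb C[u]$ is spanned by all $\kappa_{k,r}$ with $k\ne0$ and $r\ge1$, an infinite-dimensional part of the centre.

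Concretely, $\kappa_{1,1}=\overline{u\,dz}=[D\otimes z,D'\otimes u]$ for Cartan elements $D,D'$ with $\operatorname{str}(DD')=1$. In the target, $[D(1),c_jD'(0)]=0$, because the affine cocycle pairs only opposite loop degrees. So the nonzero element $\kappa_{1,1}\in U(\Tplus)$ is invisible to every map you build, and no Vandermonde argument can recover it. The ``cocycle terms carried by the central charge'' you invoke for point (b) only ever produce $\kappa_{0,r}\mapsto c_0\sum_j c_j^r$.

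There is also a filtration problem. If $\ev$ is filtered so that $x^\pm_{i,r}$ has a degree-$r$ leading symbol, the shift contributes $c^r\ev(x^\pm_{i,0})$ as the \emph{lowest} term, not the leading one. The powers $c_j^r$ lead only if you take top degree in formal parameters $c_j$, and that is exactly the scalar evaluation above.

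The missing idea is to realize $u$ not by a scalar but by the commuting current of identity-matrix loop modes. The paper filters the target $B$ by the number of factors $\Id(a)$. In $\operatorname{gr}_G\Bbd$ the symbols $j_q=\sigma_1(\Id(q))$ commute with each other and with the $\mathfrak g$-loop modes, since the Heisenberg bracket $[\Id(a),\Id(b)]=a\delta_{a+b,0}\eta^2/2$ lowers the $G$-degree by two.

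One then sets $\Theta(Xz^au^r)=\lambda^r\sum j_{q_1}\cdots j_{q_r}X(a-q_1-\cdots-q_r)$, i.e.\ $u\mapsto\lambda\mathcal J(z)$. A central form is sent to $c_0\operatorname{Res}_{z=0}$ of its pullback. Because $\mathcal J$ depends on $z$, this gives $\kappa_{k,r}\mapsto c_0\lambda^r[z^{-k}]\mathcal J(z)^r\ne0$, so $\Theta$ is injective on all of $\Tplus$, centre included.

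One checks $\sigma_r\ev(x^\pm_{i,r})=\Theta\rho(E^\pm_{i,r})$ from the explicit degree-one evaluation formulas and the recurrence $[h_{k,1}-\tfrac12h_{k,0}^2,x^\pm_{j,r}]=\pm a_{kj}x^\pm_{j,r+1}$. With an injective Lie map available, no spectral shifts are needed: the maps $v\mapsto\Theta^{\otimes d}\Delta_0^{(d-1)}(v)$ are jointly injective by an inclusion--exclusion and super-symmetrization argument.

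Your surjectivity argument and part (ii) match the paper's. One further step is only asserted: injectivity of $\rho$ is nontrivial. The paper builds an inverse from the Steinberg algebra using higher root vectors $x_\beta[r]$ and a real-root Cartan identity extracted from Ueda's construction.
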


We now describe the parameter family on the equal-parameter line. Let $\mathcal Y$ be the $\mathbb C[\hbar]$-superalgebra with the generators and relations \eqref{eq:cartan-current-difference-relation}--\eqref{eq:yangian-quartic-odd-relations}, including the isotropic relations, and set $Y_c=\mathcal Y/(\hbar-c)\mathcal Y$ for $c\in\mathbb C$. Thus the normalized algebra used in the proof is $Y_1$. As a consequence of Theorem~\ref{thm:pbw} and the classical identification in Proposition~\ref{prop:classical-presentation-isomorphism},
\[
\mathcal Y\cong\operatorname{Rees}_F(Y_1)
=\bigoplus_{p\ge0}\hbar^p F_pY_1\subseteq Y_1[\hbar],
\]
where a generator of current weight $r$ maps to $\hbar^r$ times the corresponding generator of $Y_1$. Indeed, assigning degree one to $\hbar$ makes the defining relations homogeneous. This map to the Rees algebra is surjective and, by the classical identification and the PBW theorem, is an isomorphism modulo $\hbar$. A homogeneous element in its kernel is therefore divisible by $\hbar$; since $\hbar$ is not a zero divisor in the Rees algebra, division reduces the degree and proves that the kernel is zero. The basis in part~\textup{(ii)}, with each monomial multiplied by $\hbar$ to its total current degree, gives a free $\mathbb C[\hbar]$-basis of the Rees algebra. Consequently $\mathcal Y$ is a flat deformation of the enveloping superalgebra $U(\mathfrak T_+)$, and $Y_0\cong U(\mathfrak T_+)$.

For $c,d\in\mathbb C^\times$, the assignments
\[
x^{\pm,(c)}_{i,r}\longmapsto(c/d)^r x^{\pm,(d)}_{i,r},
\qquad h^{(c)}_{i,r}\longmapsto(c/d)^r h^{(d)}_{i,r}
\]
define an isomorphism $Y_c\longrightarrow Y_d$, with inverse obtained by interchanging $c$ and $d$. These maps preserve the loop grading and intertwine the coproducts with values in the loop-degree completions described in Section~\ref{subsec:completed-tensor-product}. Indeed, the degree-zero generators are fixed, while the correction to the coproduct of a degree-one generator is linear in $\hbar$ \cite[Theorem~4.3, formula~(4.4)]{UedaConstruction}; the factor $c/d$ changes that coefficient from $d$ to $c$. The degree-zero generators and the $h_{i,1}$ generate the algebra, so this verifies compatibility throughout. The counit is also preserved. Thus all nonzero fibers are isomorphic as $\mathbb C$-bialgebras with coproducts valued in these completed tensor products. At $c=0$ the coproduct specializes to the ordinary primitive coproduct on $U(\mathfrak T_+)$. These statements concern the polynomial family with $\varepsilon_1=\varepsilon_2=\hbar/2$.

\section{The classical presentation}\label{sec:classical-presentation}
Let $\mathfrak s$ be the graded Lie superalgebra generated by
\[
E^\pm_{i,r},H_{i,r},\qquad i\in I,\quad r\ge0,
\]
with the parity of the corresponding Yangian generators and with relations
\begin{equation}\label{eq:classical-current-relations}
\begin{aligned}
[H_{i,r},H_{j,s}]&=0,\\
[E^+_{i,r},E^-_{j,s}]&=\delta_{ij}H_{i,r+s},\\
[H_{i,0},E^\pm_{j,s}]&=\pm a_{ij}E^\pm_{j,s},\qquad (i,j\in I,\ r,s\in\mathbb Z_{\ge0}).\\
[H_{i,r+1},E^\pm_{j,s}]&=[H_{i,r},E^\pm_{j,s+1}],\\
[E^\pm_{i,r+1},E^\pm_{j,s}]&=[E^\pm_{i,r},E^\pm_{j,s+1}].
\end{aligned}
\end{equation}
together with the following three homogeneous families. For both choices of sign, for $i\ne j$, put $q=1+|a_{ij}|$. Then, for all $r_1,\ldots,r_q,s\in\mathbb Z_{\ge0}$, impose
\begin{equation}\label{eq:classical-serre-relations}
\sum_{\pi\in\mathfrak S_q}
[E^\pm_{i,r_{\pi(1)}},[E^\pm_{i,r_{\pi(2)}},\ldots,[E^\pm_{i,r_{\pi(q)}},E^\pm_{j,s}]\ldots]]=0.
\end{equation}
For both signs and all $r,s\in\mathbb Z_{\ge0}$, impose the isotropic relations
\begin{equation}\label{eq:classical-isotropic-relations}
[E^\pm_{i,r},E^\pm_{i,s}]=0,\qquad i\in\{0,m\},
\end{equation}
and the quartic odd relations
\begin{equation}\label{eq:classical-quartic-odd-relations}
[[E^\pm_{i-1,r},E^\pm_{i,0}],[E^\pm_{i,0},E^\pm_{i+1,s}]]=0,
\qquad i\in\{0,m\}.
\end{equation}
Here the node subscripts are read in $I=\mathbb Z/N\mathbb Z$. The grading is $\deg E^\pm_{i,r}=\deg H_{i,r}=r$.

\begin{proposition}\label{prop:graded-surjection}
There is a canonical surjective graded homomorphism
\begin{equation}\label{eq:canonical-classical-surjection}
\Phi:U(\mathfrak s)\longrightarrow\operatorname{gr}_F Y_\hbar,
\qquad E^\pm_{i,r}\longmapsto\overline{x^\pm_{i,r}},\quad H_{i,r}\longmapsto\overline{h_{i,r}}.
\end{equation}
\end{proposition}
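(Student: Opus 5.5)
The plan is to construct $\Phi$ first as a homomorphism of Lie superalgebras $\mathfrak s\to\operatorname{gr}_FY_\hbar$, where $\operatorname{gr}_FY_\hbar$ carries its supercommutator. The universal property of $U(\mathfrak s)$ then extends it to an algebra map, and surjectivity is a separate, formal check.

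First record the structural facts about $\operatorname{gr}_FY_\hbar$ that are needed.
\begin{itemize}
\item It is a $\mathbb Z_{\ge0}$-graded associative superalgebra, since $F_pF_q\subseteq F_{p+q}$ and every generator is parity-homogeneous.
\item For homogeneous $x\in F_p$ and $y\in F_q$, the supercommutator of the symbols $\bar x\in F_p/F_{p-1}$ and $\bar y\in F_q/F_{p-1}$ is the image of $[x,y]$ in $F_{p+q}/F_{p+q-1}$.
\end{itemize}
Assign to $E^\pm_{i,r}$ and $H_{i,r}$ the classes $\overline{x^\pm_{i,r}},\overline{h_{i,r}}\in F_r/F_{r-1}$. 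Because $\mathfrak s$ is presented by generators and relations, it suffices to check that these classes satisfy \eqref{eq:classical-current-relations}--\eqref{eq:classical-quartic-odd-relations} in $\operatorname{gr}_FY_\hbar$.

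Next verify the relations one family at a time.
\begin{itemize}
\item The relations $[h,h]=0$, $[x^+,x^-]=\delta h$, $[h_{i,0},x]=\pm a_{ij}x$, the Serre relations, the isotropic relations and the quartic relations of $Y_\hbar$ are homogeneous for $\deg_F$. Each is an identity among elements of $F_d$, where $d$ is the total weight of the relation. Taking the class modulo $F_{d-1}$ gives exactly the corresponding relation \eqref{eq:classical-current-relations}, \eqref{eq:classical-serre-relations}, \eqref{eq:classical-isotropic-relations} or \eqref{eq:classical-quartic-odd-relations}. In the quartic relations the $E_{i,0}$ have weight $0$, so the weights still match.
\item For the difference relations \eqref{eq:cartan-current-difference-relation} and \eqref{eq:root-current-difference-relation}, the left side lies in $F_{r+s+1}$. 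The right side $\pm\frac{a_{ij}}2\{h_{i,r},x^\pm_{j,s}\}$ lies in $F_{r+s}\subseteq F_{r+s-1+1}$, strictly lower filtration. Hence in $F_{r+s+1}/F_{r+s}$ the two brackets on the left agree, which is \eqref{eq:graded-current-difference-relations}.
\end{itemize}
This yields a graded Lie superalgebra map $\mathfrak s\to\operatorname{gr}_FY_\hbar$, and hence a graded algebra homomorphism $\Phi$ on $U(\mathfrak s)$. The map is canonical because it is determined by the generators. It is graded because $\deg E_{i,r}=r$ matches the filtration degree of the symbol.

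Finally, surjectivity. By definition, $F_pY_\hbar$ is spanned by words $w=g_1\cdots g_k$ in the generators with $\sum r_j\le p$. The piece $F_p/F_{p-1}$ is therefore spanned by the classes of words whose weight is exactly $p$; words of smaller weight already lie in $F_{p-1}$. For such a word, multiplicativity gives $\bar w=\bar g_1\cdots\bar g_k$, since each $g_j\in F_{r_j}$ and the product of symbols is the symbol of the product in degree $\sum r_j=p$. This element is $\Phi$ of the corresponding monomial in $E,H$, so every graded piece lies in the image.

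The only point needing care, and the likeliest place for a gap, is this multiplicativity of symbols together with the claim that a word of weight exactly $p$ defines a class in $F_p/F_{p-1}$ which is the product of the generator classes. This follows directly from the explicit span definition of $F_\bullet$ and needs no knowledge of whether $\bar w$ vanishes. No other obstacle arises: every relation is either homogeneous or has its correction term in strictly lower filtration.
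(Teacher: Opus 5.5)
Your proposal is correct and is essentially the paper's argument. The paper phrases it ideal-theoretically: the leading components of the defining relations generate an ideal $J_{\mathrm{top}}$ of the free superalgebra $T$, with $T/J_{\mathrm{top}}=U(\mathfrak s)$ and $J_{\mathrm{top}}\subseteq\operatorname{in}_F(J)$, so $\operatorname{gr}_F(T/J)=T/\operatorname{in}_F(J)$ is a quotient of $U(\mathfrak s)$; this is exactly your relation check followed by your surjectivity-by-words step (note only the typo $\bar y\in F_q/F_{q-1}$).
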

\begin{proof}
Let $T$ be the free associative superalgebra on the standard generators, graded by their current weights, and let $J$ be the ideal of defining relations. Its grading identifies $\operatorname{gr}_F T$ with $T$. For a nonzero element $f=f_0+\cdots+f_d\in J$, where each $f_j$ is homogeneous of current degree $j$ and $f_d\ne0$, call $f_d$ its leading homogeneous component. The initial ideal $\operatorname{in}_F(J)$ is the two-sided ideal generated by these leading components for \emph{all} nonzero elements of $J$. By contrast, let $J_{\mathrm{top}}$ be generated by the leading components of the displayed defining relations alone. Then
\[
T/J_{\mathrm{top}}=U(\mathfrak s).
\]
Relations \eqref{eq:cartan-current-difference-relation}--\eqref{eq:root-current-difference-relation} have degree $r+s+1$ on the left and degree $r+s$ on the right; every other defining relation is homogeneous. Thus
\[
J_{\mathrm{top}}\subseteq\operatorname{in}_F(J),\qquad
\operatorname{gr}_F(T/J)=T/\operatorname{in}_F(J).
\]
Taking the quotient by the larger initial ideal gives \eqref{eq:canonical-classical-surjection}. \emph{This is the surjectivity step for $\Phi$.} Under the classical identification of Proposition~\ref{prop:classical-presentation-isomorphism}, it gives the surjectivity of \eqref{eq:canonical-pbw-surjection}. Injectivity amounts to showing that the defining relations exhaust the initial ideal; it is proved in the concluding proof of Theorem~\ref{thm:pbw} in Section~\ref{sec:coproduct}. This is the filtered starting point of the strategy used by Levendorskii \cite{Levendorskii}: first construct the canonical graded epimorphism and then prove that it is injective.
\end{proof}

\subsection{The universal central extension and Steinberg algebra}\label{subsec:central-extension-and-steinberg-algebra}
We next identify $\mathfrak s$. We first state in full the two external structural results used in this identification. The algebra $A_+=\mathbb C[z^{\pm1},u]$ is regarded as a purely even commutative superalgebra, and the invariant form on $\mathfrak g=\mathfrak{sl}(m|n)$ is $(X,Y)=\operatorname{str}(XY)$, where $\operatorname{str}(X)=\sum_{a=1}^N(-1)^{p(a)}X_{aa}$. For a parity-homogeneous matrix $X$, write $|X|\in\mathbb Z/2\mathbb Z$ for its parity.

We use the differential-form description of central extensions from Iohara--Koga \cite[Theorem~4.7]{IoharaKoga}. Here $\Omega^1_{A_+/\mathbb C}$ denotes the $A_+$-module of K\"ahler differentials, generated by the symbols $df$ for $f\in A_+$ subject to $\mathbb C$-linearity and the Leibniz relation $d(fg)=f\,dg+g\,df$. The map $d:A_+\to\Omega^1_{A_+/\mathbb C}$ is the universal $\mathbb C$-derivation. Its image $dA_+$ is the $\mathbb C$-linear subspace of exact forms, and $\Omega^1_{A_+/\mathbb C}/dA_+$ is a quotient of $\mathbb C$-vector spaces, placed in even parity. A bar denotes the class modulo exact forms: $\overline\omega=\omega+dA_+$. In particular, $\overline{g\,df}$ is the class of the product of $g\in A_+$ with the differential of $f\in A_+$; this is the notation used in \eqref{eq:universal-central-extension-bracket} and \eqref{eq:differential-cocycle-identities}.

\begin{proposition}[Iohara--Koga]\label{prop:differential-form-central-extension}
For $m,n\ge2$ and $m\ne n$, the universal central extension of $\mathfrak g\otimes A_+$ is
\begin{equation}\label{eq:universal-central-extension-model}
\mathfrak T_+=(\mathfrak g\otimes A_+)\oplus\Omega^1_{A_+/\mathbb C}/dA_+.
\end{equation}
The second summand is central, and for homogeneous $X,Y\in\mathfrak g$ and $f,g\in A_+$ the bracket is
\begin{equation}\label{eq:universal-central-extension-bracket}
[X\otimes f,Y\otimes g]=[X,Y]\otimes fg+\operatorname{str}(XY)\overline{g\,df}.
\end{equation}
The projection
\begin{equation}\label{eq:universal-central-extension-projection}
\pi_{\mathrm{uce}}:\mathfrak T_+\longrightarrow\mathfrak g\otimes A_+,
\qquad X\otimes f+\omega\longmapsto X\otimes f,
\end{equation}
is the universal central extension.
\end{proposition}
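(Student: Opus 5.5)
The plan is to follow Kassel's approach to central extensions of current algebras $\mathfrak g\otimes A$, adapted to the super setting, using throughout that $m\ne n$ and $m,n\ge2$. The key input from this hypothesis is that $\mathfrak g=\mathfrak{sl}(m|n)$ is a basic classical simple Lie superalgebra. Its Killing form equals $2(m-n)\operatorname{str}(XY)$, which is nondegenerate. Its space of even invariant supersymmetric bilinear forms is one-dimensional, spanned by $\operatorname{str}(XY)$. Finally, $H_2(\mathfrak g)=0$, which excludes the exceptional small-rank and $\mathfrak{psl}(n|n)$ cases. The argument has three steps, done in the following order.

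\textbf{Step 1: the bracket defines a Lie superalgebra.} Check that \eqref{eq:universal-central-extension-bracket} is a Lie superalgebra bracket. Super-antisymmetry follows from two facts: $\operatorname{str}(XY)=(-1)^{|X||Y|}\operatorname{str}(YX)$, and $\overline{g\,df}=-\overline{f\,dg}$, which holds because $\overline{d(fg)}=0$. For the super-Jacobi identity, the $\mathfrak g\otimes A_+$ components cancel as in $\mathfrak g$. The central components reduce to two facts. First, invariance makes $\operatorname{str}([X,Y]Z)$ cyclically (super)symmetric. Second, the cyclic sum $\overline{h\,d(fg)}+\overline{f\,d(gh)}+\overline{g\,d(hf)}$ equals $\overline{d(fgh)}=0$ by Leibniz. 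Since the second summand is central by construction, the result is a central extension of $\mathfrak g\otimes A_+$.

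\textbf{Step 2: $\mathfrak T_+$ is perfect.} Since $\mathfrak g=[\mathfrak g,\mathfrak g]$, every $X\otimes f$ lies in the derived algebra modulo centre. To reach the centre itself, pick commuting even diagonal elements $X,Y$ in the Cartan subalgebra with $\operatorname{str}(XY)=1$. Then $[X\otimes f,Y\otimes g]=\overline{g\,df}$, and these classes span $\Omega^1_{A_+/\mathbb C}/dA_+$.

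\textbf{Step 3: universality.} A perfect central extension is universal exactly when every central extension of $\mathfrak g\otimes A_+$ splits after pulling back to it. Equivalently, it suffices to show that every even $2$-cocycle $\psi$ on $\mathfrak g\otimes A_+$ with values in a trivial module is cohomologous to one of the form
\[
\psi(X\otimes f,Y\otimes g)=\operatorname{str}(XY)\,\beta(f,g),
\]
where $\beta$ satisfies $\beta(f,g)=-\beta(g,f)$ and $\beta(fg,h)+\beta(gh,f)+\beta(hf,g)=0$. Such a $\beta$ factors through the linear map $\overline{g\,df}\mapsto\beta(g,f)$ on $\Omega^1_{A_+/\mathbb C}/dA_+$. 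To obtain this normal form, fix $f,g$ and consider the bilinear form $\psi_{f,g}(X,Y)=\psi(X\otimes f,Y\otimes g)$. The cocycle identity with a third argument $Z\otimes1$ shows that, after subtracting a coboundary $\phi([\cdot,\cdot])$, each $\psi_{f,g}$ becomes $\mathfrak g$-invariant. The coboundary is defined using perfectness of $\mathfrak g$ and a Casimir averaging, which requires the nondegenerate Killing form. Invariance together with one-dimensionality then forces $\psi_{f,g}=\beta(f,g)\operatorname{str}$. The two identities for $\beta$ follow from the remaining cocycle identities by choosing $X,Y,Z$ with $\operatorname{str}([X,Y]Z)\ne0$.

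The main obstacle is this invariance normalization in the super setting. It needs the vanishing of $H^1(\mathfrak g,\mathfrak g^*)$ (equivalently, every even derivation-like functional is inner) together with the one-dimensionality of invariant forms. Both genuinely use $m\ne n$: the Casimir element acts on $\mathfrak g$ by a nonzero scalar, and that scalar is what allows cohomology classes to be averaged away. If this direct route becomes unwieldy, I would instead reduce to $H_2(\mathfrak g)=0$ and the Künneth-type decomposition $H_2(\mathfrak g\otimes A)\cong(\text{invariant forms on }\mathfrak g)\otimes HC_1(A)$, and identify $HC_1(A_+)$ with $\Omega^1_{A_+/\mathbb C}/dA_+$. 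That is the route of Iohara--Koga \cite[Theorem~4.7]{IoharaKoga}.
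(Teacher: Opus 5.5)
Your proposal is correct. Your Step~1 is exactly what the paper checks: super-skew-symmetry and the cocycle identity, using $\operatorname{str}(YX)=(-1)^{|X||Y|}\operatorname{str}(XY)$, invariance, $\overline{g\,df}=-\overline{f\,dg}$ and $\overline{gh\,df+hf\,dg+fg\,dh}=0$.

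Beyond that the routes differ. The paper does not prove universality itself. It verifies the hypotheses of Iohara--Koga's Theorem~4.7 and cites it: $A_+$ unital commutative, characteristic zero, $\mathfrak{sl}(m|n)$ basic classical simple with nondegenerate supertrace form, and $A(1,1)$ excluded by $m\ne n$. Perfectness of $\mathfrak T_+$ (your Step~2) appears only later, in the proof of Lemma~\ref{lem:classical-map-extension}, by the same argument with commuting Cartan elements $D,D'$ satisfying $\operatorname{str}(DD')=1$.

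Your Step~3 instead normalizes an arbitrary even cocycle to $\operatorname{str}(XY)\beta(f,g)$, in the style of Kassel. This makes the result self-contained and shows exactly where $m\ne n$ enters. The supertrace Casimir acts on $\mathfrak g^*\otimes V\cong\mathfrak g\otimes V$ by the scalar $2(m-n)\neq0$, so $H^\bullet(\mathfrak g,\mathfrak g^*\otimes V)=0$. The citation buys brevity and handles general coefficient algebras at once.

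If you write Step~3 out in full, tighten two points.

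First, organize the invariance step as follows. For fixed $h$, the map $Z\mapsto\psi(\,\cdot\otimes h,Z\otimes1)$ is a $1$-cocycle of $\mathfrak g$ with values in $\mathfrak g^*\otimes V$, so it equals $\delta\phi_h$. The primitive $\phi_h$ is unique, hence linear in $h$, because $\mathfrak g$ is perfect and so $(\mathfrak g^*\otimes V)^{\mathfrak g}=0$. Subtracting the coboundary of $\phi(W\otimes h)=\phi_h(W)$ makes $\psi(\,\cdot\otimes h,\cdot\otimes1)=0$. The cocycle identity with third argument $Z\otimes1$ then gives invariance of every $\psi_{f,g}$.

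Apply the Casimir only to this specific module. A general projection onto invariant cochains via complete reducibility is not available here. For example, for $\mathfrak{sl}(3|2)$ one has $\operatorname{sdim}\mathfrak g=0$, and $\mathbb C\,\mathrm{id}\subset\mathfrak g\otimes\mathfrak g^*$ is not a direct summand.

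Second, $\psi_{f,g}$ has no symmetry when $f\ne g$. So you need that \emph{every} invariant bilinear form on $\mathfrak g$ is a multiple of $\operatorname{str}$, not only the supersymmetric ones. This holds because $\operatorname{End}_{\mathfrak g}(\mathfrak g)=\mathbb C$: the adjoint module has no odd self-intertwiner, since $\mathfrak g_{\bar0}\not\cong\mathfrak g_{\bar1}$ as $\mathfrak g_{\bar0}$-modules.
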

\begin{proof}
This is the assertion of Iohara--Koga \cite[Theorem 4.7]{IoharaKoga} specialized to the present data. Its hypotheses hold because $A_+$ is a unital commutative $\mathbb C$-algebra and the ground field has characteristic zero, and $\mathfrak{sl}(m|n)$ with $m\ne n$ is a simple basic classical Lie superalgebra whose supertrace form is nondegenerate. The exceptional type $A(1,1)$ is excluded by $m\ne n$.

For all $f,g,h\in A_+$, the two identities in the quotient $\Omega^1_{A_+/\mathbb C}/dA_+$ which make the displayed cocycle work are
\begin{equation}\label{eq:differential-cocycle-identities}
\overline{g\,df}=-\overline{f\,dg},\qquad
\overline{gh\,df+hf\,dg+fg\,dh}=0.
\end{equation}
The first follows from $d(fg)=f\,dg+g\,df$ and the second from $d(fgh)=gh\,df+hf\,dg+fg\,dh$. Together with
\[
\operatorname{str}(YX)=(-1)^{|X||Y|}\operatorname{str}(XY)
\]
and invariance of the supertrace, these identities verify directly the super-skew-symmetry and the cocycle identity in \eqref{eq:universal-central-extension-bracket}. Universality follows from the cited theorem.
\end{proof}

We make its centre explicit. Since
\begin{equation}\label{eq:kahler-differentials-of-current-ring}
\Omega^1_{A_+/\mathbb C}=A_+\,dz\oplus A_+\,du,\qquad
d(z^ku^r)=kz^{k-1}u^r\,dz+rz^ku^{r-1}\,du,
\end{equation}
both terms on the right have bidegree $(k,r)$. When $r\ge1$, the space of forms of bidegree $(k,r)$ is two-dimensional and its exact subspace is generated by the second expression in \eqref{eq:kahler-differentials-of-current-ring}; the quotient is therefore one-dimensional. When $r=0$, $z^{k-1}\,dz$ is exact for $k\ne0$, whereas $z^{-1}\,dz$ is not exact. Thus
\begin{equation}\label{eq:central-differential-basis}
\frac{\Omega^1_{A_+/\mathbb C}}{dA_+}
=\mathbb C\kappa_{0,0}\oplus\bigoplus_{\substack{k\in\mathbb Z\\r\ge1}}\mathbb C\kappa_{k,r},
\qquad\kappa_{0,0}=\overline{z^{-1}\,dz},\quad
\kappa_{k,r}=\overline{z^{k-1}u^r\,dz}.
\end{equation}
With
\[
\deg u=\deg du=1,\qquad\deg z=\deg dz=0,
\]
the degree-zero component is
\begin{equation}\label{eq:current-degree-zero-component}
(\mathfrak T_+)_0=\mathfrak g\otimes\mathbb C[z^{\pm1}]\oplus\mathbb C\kappa_{0,0}.
\end{equation}
Indeed, \eqref{eq:universal-central-extension-bracket} gives the usual affine bracket
\begin{equation}\label{eq:affine-current-degree-zero-bracket}
[X\otimes z^k,Y\otimes z^\ell]=[X,Y]\otimes z^{k+\ell}
+k\delta_{k+\ell,0}\operatorname{str}(XY)\kappa_{0,0}.
\end{equation}
Consequently, \eqref{eq:current-degree-zero-component} is the derived affine Lie superalgebra, without the degree derivation.

We now spell out the Steinberg result, using the parity convention \eqref{eq:distinguished-parity-sequence}.

\begin{definition}\label{def:steinberg-lie-superalgebra}
The Steinberg Lie superalgebra $\mathfrak{st}_{m|n}(A_+)$ is generated by symbols $F_{ab}(f)$, linear in $f\in A_+$, for $a\ne b$, with parity $p(a)+p(b)$, subject to
\begin{equation}\label{eq:steinberg-chain-relation}
[F_{ab}(f),F_{bc}(g)]=F_{ac}(fg),\qquad a,b,c\text{ pairwise distinct},
\end{equation}
\begin{equation}\label{eq:steinberg-disjoint-relation}
[F_{ab}(f),F_{cd}(g)]=0,\qquad b\ne c,\quad a\ne d.
\end{equation}
\end{definition}

\begin{theorem}\label{thm:steinberg-universal-central-extension}
Let $m,n\ge2$ with $m\ne n$, put $N=m+n$, and let $A_+=\mathbb C[z^{\pm1},u]$ be purely even. Let $\mathfrak{sl}_{m|n}(A_+)$ be the Lie superalgebra of supertrace-zero $N\times N$ matrices over $A_+$, with matrix-unit parity $p(a)+p(b)$, where $p(a)=0$ for $a\le m$ and $p(a)=1$ for $a>m$. Let $\mathfrak{st}_{m|n}(A_+)$ be generated by the symbols $F_{ab}(f)$ for $a\ne b$ and $f\in A_+$, linear in $f$ and of parity $p(a)+p(b)$, subject to
\[
\begin{aligned}
[F_{ab}(f),F_{bc}(g)]&=F_{ac}(fg)&&\text{for distinct }a,b,c,\\
[F_{ab}(f),F_{cd}(g)]&=0&&\text{if }b\ne c\text{ and }a\ne d.
\end{aligned}
\]
Then the homomorphism
\begin{equation}\label{eq:steinberg-central-extension}
\varphi:\mathfrak{st}_{m|n}(A_+)\longrightarrow\mathfrak{sl}_{m|n}(A_+),
\qquad F_{ab}(f)\longmapsto E_{ab}\otimes f,
\end{equation}
is surjective with central kernel canonically isomorphic to the first cyclic homology $HC_1(A_+)$. It is the universal central extension: for every central extension $q:\mathfrak e\to\mathfrak{sl}_{m|n}(A_+)$ there is a unique Lie-superalgebra homomorphism $\theta:\mathfrak{st}_{m|n}(A_+)\to\mathfrak e$ such that $q\circ\theta=\varphi$.
\end{theorem}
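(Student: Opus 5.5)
The plan is to prove three things about $\varphi$: that it is surjective, that its kernel is central, and that $\mathfrak{st}_{m|n}(A_+)$ is perfect and \emph{centrally closed} (every central extension of $\mathfrak{sl}_{m|n}(A_+)$ receives a map from it). Once these hold, $\varphi$ is a universal central extension. Uniqueness of universal central extensions then identifies $\mathfrak{st}_{m|n}(A_+)$ with the model $\mathfrak T_+$ of Proposition~\ref{prop:differential-form-central-extension}. So $\ker\varphi\cong\Omega^1_{A_+/\mathbb C}/dA_+$, and since $A_+$ is smooth this is $HC_1(A_+)$ by the Hochschild--Kostant--Rosenberg and Connes identification. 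This route uses Iohara--Koga for the kernel computation and avoids a direct Kassel--Loday computation of the kernel. The result is the Chen--Sun theorem \cite{ChenSun}; the steps below indicate how the argument runs.

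\emph{Surjectivity and perfectness.}
\begin{itemize}
\item The off-diagonal $E_{ab}\otimes f$ lie in the image by definition.
\item A supertrace-zero diagonal element over $A_+$ is a sum of terms $f\bigl(E_{aa}-(-1)^{p(a)+p(b)}E_{bb}\bigr)$, up to normalisation by the sign $(-1)^{p(a)}$. Each such term equals $[E_{ab}\otimes f,E_{ba}\otimes 1]$.
\item Since $N\ge3$, for distinct $a,b,c$ we have $F_{ab}(f)=[F_{ac}(f),F_{cb}(1)]$, so $\mathfrak{st}_{m|n}(A_+)$ is perfect.
\end{itemize}

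\emph{Centrality of the kernel.}
\begin{itemize}
\item Put $H_{ab}(f,g)=[F_{ab}(f),F_{ba}(g)]$.
\item Using the defining relations and the super Jacobi identity, $\mathfrak{st}_{m|n}(A_+)$ is spanned by $\sum_{a\ne b}F_{ab}(A_+)$ together with the span of the $H_{ab}(f,g)$. Moreover $\varphi$ is injective on the off-diagonal part, because it preserves the root grading by the weights $\epsilon_a-\epsilon_b$.
\item Hence $\ker\varphi$ lies in the span of the $H$'s, which is the zero-weight part.
\item Next compute $[H_{ab}(f,g),F_{cd}(h)]$ by Jacobi. The result is $F_{cd}$ of an explicit element of $A_+$, namely the same value as $[\varphi(H_{ab}(f,g)),E_{cd}\otimes h]$.
\item Thus $\operatorname{ad}$ of any zero-weight element acts on the generators only through its image under $\varphi$. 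Elements of $\ker\varphi$ therefore kill all generators and are central.
\end{itemize}

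\emph{Central closedness.} Let $q:\mathfrak e\to\mathfrak{sl}_{m|n}(A_+)$ be a central extension, and choose arbitrary parity-homogeneous lifts $\widetilde{E}_{ab}(f)$ of $E_{ab}\otimes f$. Define
\[
\theta(F_{ab}(f))=[\widetilde{E}_{ac}(f),\widetilde{E}_{cb}(1)],\qquad c\ne a,b .
\]
\begin{itemize}
\item This is independent of the choice of lifts, because brackets of lifts see only the central ambiguity through $q$ and central elements bracket to zero.
\item It is independent of $c$: expand with a third auxiliary index $d$ via Jacobi, which is available since $N\ge3$. Independence in $c$ also gives linearity in $f$.
\item One then checks the chain and disjoint relations in $\mathfrak e$. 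In each case the bracket in question lies in a root space where, modulo the centre, the identity already holds in $\mathfrak{sl}_{m|n}(A_+)$. Rewriting both sides as double brackets of lifts removes the central ambiguity.
\item Uniqueness of $\theta$ follows from perfectness.
\end{itemize}

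\emph{Main obstacle.} The hardest step is this last verification when indices coincide and odd root vectors are involved. The relevant cases are $[F_{ab}(f),F_{ba}(g)]$-type cancellations and odd--odd brackets $F_{ab}$ with $p(a)\ne p(b)$, where the super-Jacobi signs must be tracked. For these the first attempt is to choose $c$ of the same parity as $a$ or $b$, which is possible because $m,n\ge2$, so that intermediate brackets involve an even generator and the signs simplify. Finally, I would confirm that the resulting identification of $\ker\varphi$ is canonical by tracing $H_{ab}(f,g)-H_{ab}(fg,1)\mapsto\pm\overline{g\,df}$, using \eqref{eq:universal-central-extension-bracket}.
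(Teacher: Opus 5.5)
Your argument is correct, but it is not the paper's route. The paper gives no internal proof of this theorem. It cites Chen--Sun: Proposition~4.1 for centrality of the kernel, Theorem~5.1 for $\ker\varphi\cong HC_1(A_+)$, and Theorem~6.1 for universality. It only checks their hypotheses: $A_+$ is purely even, unital and associative with a basis containing $1$, and $N\ge5$.

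You instead reprove centrality and universality directly by the Steinberg/Kassel--Loday lifting argument. Centrality comes from the weight grading together with injectivity of $\varphi$ on each $F_{ab}(A_+)$, and universality from setting $\theta(F_{ab}(f))=[\widetilde E_{ac}(f),\widetilde E_{cb}(1)]$. You then obtain the kernel not intrinsically but by comparison with the Iohara--Koga model $\mathfrak T_+$, using uniqueness of universal central extensions and $HC_1(A)\cong\Omega^1_{A}/dA$. That last identification holds for every commutative $A$ (the paper records it directly from the tensor description), so invoking smoothness and HKR is more than you need.

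This reverses the paper's logic. The paper takes the $HC_1$ description from Chen--Sun, then builds $\psi:\mathfrak{st}_{m|n}(A_+)\to\mathfrak T_+$ and checks $\psi(h_j(f,g))=\overline{g\,df}$ as agreement between two independently cited results. In your version that same formula is what defines the identification, so the check becomes tautological. The citation buys generality (possibly noncommutative $\mathbb Z/2$-graded coefficients) and keeps the kernel statement independent of Iohara--Koga. Your route is nearly self-contained and shows exactly where the index count enters.

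Some small corrections:
\begin{itemize}
\item Independence of the auxiliary index $c$ needs a fourth index, so $N\ge4$, not $N\ge3$.
\item The disjoint relation with $a,b,c,d$ pairwise distinct is handled by writing $\theta(F_{cd}(g))=[\widetilde E_{ce}(g),\widetilde E_{ed}(1)]$ with $e\notin\{a,b,c,d\}$. This is where $N\ge5$ is used.
\item The sign bookkeeping you flag as the main obstacle does not actually arise. In each Jacobi expansion, one term is a bracket with a lift of a vanishing bracket in $\mathfrak{sl}_{m|n}(A_+)$, hence with a central element, so it drops out. No special choice of parity for the auxiliary index is needed.
\item Linearity of $\theta$ in $f$ comes from choosing the lifts through a linear section of $q$ plus independence of lifts, not from independence of $c$.
\end{itemize}
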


\begin{remark*}
The central-extension assertion is Chen--Sun's Proposition~4.1, the kernel identification is their Theorem~5.1, and universality is their Theorem~6.1 \cite[Definition~4.1, Proposition~4.1 and Theorems~5.1, 6.1]{ChenSun}. Their hypotheses hold here: $A_+$ is purely even, associative, unital and has the $\mathbb C$-basis $\{z^ku^r\mid k\in\mathbb Z,r\ge0\}$ containing $1$, while $m,n\ge2$ and $m\ne n$ imply $N=m+n\ge5$.
\end{remark*}

Since $A_+$ is commutative,
\begin{equation}\label{eq:special-linear-current-algebra}
\mathfrak{sl}_{m|n}(A_+)=\mathfrak{sl}(m|n)\otimes A_+.
\end{equation}
For this purely even commutative algebra, the first cyclic homology $HC_1(A_+)$ is the quotient of $A_+\otimes_{\mathbb C}A_+$ by the $\mathbb C$-linear span of
\[
f\otimes g+g\otimes f,\qquad
f\otimes gh+g\otimes hf+h\otimes fg,\qquad f,g,h\in A_+,
\]
as in the tensor description of Chen--Sun \cite[Definition~4.1 and Theorem~5.1]{ChenSun}. Denote the class of $f\otimes g$ in this quotient by $\langle\!\langle f,g\rangle\!\rangle$. Its defining relations, for all $f,g,h\in A_+$, are
\begin{equation}\label{eq:cyclic-homology-skew-symmetry}
\langle\!\langle f,g\rangle\!\rangle+\langle\!\langle g,f\rangle\!\rangle=0,
\end{equation}
\begin{equation}\label{eq:cyclic-homology-cycle-relation}
\langle\!\langle f,gh\rangle\!\rangle+
\langle\!\langle g,hf\rangle\!\rangle+
\langle\!\langle h,fg\rangle\!\rangle=0.
\end{equation}
The differential description used above identifies this space with the central kernel in the Iohara--Koga model \cite[Theorem~4.7]{IoharaKoga}:
\begin{equation}\label{eq:cyclic-homology-differential-isomorphism}
HC_1(A_+)\xrightarrow{\sim}\Omega^1_{A_+/\mathbb C}/dA_+,
\qquad\langle\!\langle f,g\rangle\!\rangle\longmapsto\overline{g\,df}.
\end{equation}
Relations \eqref{eq:cyclic-homology-skew-symmetry}--\eqref{eq:cyclic-homology-cycle-relation} map respectively to $\overline{d(fg)}=0$ and $\overline{d(fgh)}=0$; conversely they are exactly the antisymmetry and Leibniz relations needed to define the inverse $\overline{g\,df}\mapsto\langle\!\langle f,g\rangle\!\rangle$.

The two universal central extensions are identified by
\begin{equation}\label{eq:steinberg-differential-isomorphism}
\psi:\mathfrak{st}_{m|n}(A_+)\xrightarrow{\sim}\mathfrak T_+,
\qquad F_{ab}(f)\longmapsto E_{ab}\otimes f.
\end{equation}
Indeed, if $a,b,c$ are distinct, then $E_{ab}E_{bc}=E_{ac}$, $E_{bc}E_{ab}=0$, and $\operatorname{str}(E_{ab}E_{bc})=0$, so the first Steinberg relation is preserved. If $b\ne c$ and $a\ne d$, both products of $E_{ab}$ and $E_{cd}$ and their supertrace pairing vanish, so the second relation is preserved. On the central kernel, fix $j\ne1$ and set
\[
H_{1j}(f,g)=[F_{1j}(f),F_{j1}(g)],\qquad h_j(f,g)=H_{1j}(f,g)-H_{1j}(1,gf).
\]
Then a direct use of \eqref{eq:universal-central-extension-bracket} gives
\begin{equation}\label{eq:steinberg-diagonal-images}
\begin{aligned}
\psi(H_{1j}(f,g))&=(E_{11}-(-1)^{p(j)}E_{jj})\otimes fg+\overline{g\,df},\\
\psi(H_{1j}(1,gf))&=(E_{11}-(-1)^{p(j)}E_{jj})\otimes gf+\overline{gf\,d1}\\
&=(E_{11}-(-1)^{p(j)}E_{jj})\otimes fg.
\end{aligned}
\end{equation}
Subtracting the two lines yields
\begin{equation}\label{eq:steinberg-central-images}
\psi(h_j(f,g))=\overline{g\,df},
\end{equation}
which agrees with \eqref{eq:cyclic-homology-differential-isomorphism}. Finally, both sides of \eqref{eq:steinberg-differential-isomorphism} are universal central extensions of the same current Lie superalgebra, so uniqueness of a universal central extension makes $\psi$ an isomorphism.

Consider the following assignment from the standard generators of $\mathfrak s$ to $\mathfrak T_+$. For $1\le i<N$, prescribe
\begin{equation}\label{eq:finite-generator-assignment}
\begin{aligned}
E^+_{i,r}&\longmapsto E_{i,i+1}\otimes u^r,\\
E^-_{i,r}&\longmapsto(-1)^{p(i)}E_{i+1,i}\otimes u^r,\\
H_{i,r}&\longmapsto\bigl((-1)^{p(i)}E_{ii}-(-1)^{p(i+1)}E_{i+1,i+1}\bigr)\otimes u^r.
\end{aligned}
\end{equation}
and at the affine vertex prescribe
\begin{equation}\label{eq:affine-generator-assignment}
\begin{aligned}
E^+_{0,r}&\longmapsto E_{N1}\otimes zu^r,\\
E^-_{0,r}&\longmapsto-E_{1N}\otimes z^{-1}u^r,\\
H_{0,r}&\longmapsto(-E_{11}-E_{NN})\otimes u^r+\overline{z^{-1}u^r\,dz}.
\end{aligned}
\end{equation}

\subsection{The affine algebra in current degree zero}\label{subsec:current-degree-zero-affine-algebra}
Here ``current degree zero'' means degree zero in the variable $u$. It places no restriction on the exponent of the affine loop variable $z$: the affine generators already contain $z$ and $z^{-1}$, and their brackets produce loop elements with arbitrary integral powers of $z$.

We record explicitly the affine presentation used below.

Let $\mathfrak a_{\mathrm{CS}}$ denote the Chevalley--Serre presentation used in
\cite[Definition~2.3 and Proposition~2.4]{UedaConstruction}, with the loop variable renamed $z$.
Its generators are $e_i^\pm,k_i$ for $i\in I$, with $e_0^\pm,e_m^\pm$ odd,
and its relations are
\begin{equation}
\begin{aligned}
 &[k_i,k_j]=0,\qquad [k_i,e_j^\pm]=\pm a_{ij}e_j^\pm,
       \qquad [e_i^+,e_j^-]=\delta_{ij}k_i,\\
 &(\operatorname{ad}e_i^\pm)^{1+|a_{ij}|}(e_j^\pm)=0
       &&(i\ne j),\\
 &[e_i^\pm,e_i^\pm]=0 &&(i=0,m),\\
 &[[e_{i-1}^\pm,e_i^\pm],[e_{i+1}^\pm,e_i^\pm]]=0
       &&(i=0,m).
\end{aligned}
\label{eq:affine-chevalley-relations}
\end{equation}
We use the following concrete affine loop notation:
\begin{equation}
\widehat{\g}_{\mathrm U}=\g\otimes\mathbb C[z^{\pm1}]\oplus\mathbb Cc,
\qquad
[Xz^r,Yz^s]=[X,Y]z^{r+s}
   +r\delta_{r+s,0}\str(XY)c.
\label{eq:affine-loop-model}
\end{equation}
Write $\Xi$ for the Chevalley generator assignment with normalization
\begin{equation}
\begin{aligned}
 e_i^+&\longmapsto E_{i,i+1},
 &e_i^-&\longmapsto(-1)^{p(i)}E_{i+1,i}
       &&(1\le i<N),\\
 k_i&\longmapsto(-1)^{p(i)}E_{ii}
                    -(-1)^{p(i+1)}E_{i+1,i+1}
       &&&&(1\le i<N),\\
 e_0^+&\longmapsto E_{N1}z,
 &e_0^-&\longmapsto-E_{1N}z^{-1},\\
 k_0&\longmapsto-E_{11}-E_{NN}+c.
\end{aligned}
\label{eq:affine-chevalley-normalization}
\end{equation}
In Ueda's notation, \eqref{eq:affine-loop-model} is $\widehat{\g}_{\mathrm U}$; the full affine algebra
also contains the degree derivation:
\begin{equation}
\widetilde{\g}_{\mathrm U}
   =\widehat{\g}_{\mathrm U}\oplus\mathbb Cd,
\qquad [d,Xz^r]=rXz^r.
\label{eq:affine-degree-derivation}
\end{equation}
The quartic relation in \eqref{eq:affine-chevalley-relations} uses the order $[e_{i+1}^\pm,e_i^\pm]$
in its second inner bracket. Reversing this order gives the convention in
\eqref{eq:classical-quartic-odd-relations}, up to a nonzero super-skew-symmetry sign. The central term at the
affine vertex can be checked directly:
\begin{equation}
\begin{aligned}
 [E_{N1}z,-E_{1N}z^{-1}]
   &=(-E_{11}-E_{NN})+\str(E_{N1}(-E_{1N}))c\\
   &=-E_{11}-E_{NN}+c,
\end{aligned}
\label{eq:affine-node-central-bracket}
\end{equation}
because both matrix units are odd and $\str(-E_{NN})=1$. At a finite vertex,
\begin{equation}
 [E_{i,i+1},(-1)^{p(i)}E_{i+1,i}]
   =(-1)^{p(i)}E_{ii}-(-1)^{p(i+1)}E_{i+1,i+1}.
\label{eq:finite-node-chevalley-bracket}
\end{equation}
The remaining matrix checks use
\begin{equation}
 [E_{ab},E_{cd}]
  =\delta_{bc}E_{ad}
    -(-1)^{(p(a)+p(b))(p(c)+p(d))}\delta_{ad}E_{cb}.
\label{eq:matrix-unit-superbracket}
\end{equation}
Since $\g$ is perfect, its loop algebra lies in the derived algebra of
$\widetilde{\g}_{\mathrm U}$. Both the left-hand side of \eqref{eq:affine-node-central-bracket} and its
diagonal term are derived, so $c$ is derived as well. No commutator has a
$d$-component. Hence
\begin{equation}
 [\widetilde{\g}_{\mathrm U},\widetilde{\g}_{\mathrm U}]
   =\widehat{\g}_{\mathrm U}.
\label{eq:affine-derived-subalgebra}
\end{equation}

\begin{lemma}\label{lem:current-degree-zero-identification}
There are homomorphisms
\begin{equation}
 \mathfrak a_{\mathrm{CS}}
   \xrightarrow{\ \iota_0\ }\mathfrak s
   \xrightarrow{\ \pi_0\ }\mathfrak a_{\mathrm{CS}}
\label{eq:current-degree-zero-retraction}
\end{equation}
given on generators by
\begin{equation}
\begin{aligned}
 \iota_0(e_i^\pm)&=E_{i,0}^\pm,
 &\iota_0(k_i)&=H_{i,0},\\
 \pi_0(E_{i,0}^\pm)&=e_i^\pm,
 &\pi_0(H_{i,0})&=k_i,\\
 \pi_0(E_{i,r}^\pm)&=0,
 &\pi_0(H_{i,r})&=0\qquad(r>0).
\end{aligned}
\label{eq:current-degree-zero-retraction-generators}
\end{equation}
Moreover, $\pi_0\iota_0=\Id$ and
\begin{equation}
 \mathfrak s_0\cong\mathfrak a_{\mathrm{CS}}
       \cong\widehat{\g}_{\mathrm U}.
\label{eq:current-degree-zero-identification}
\end{equation}
\end{lemma}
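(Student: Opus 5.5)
We first check that $\iota_0$ and $\pi_0$ are well defined. For $\iota_0$, every defining relation of $\mathfrak a_{\mathrm{CS}}$ in \eqref{eq:affine-chevalley-relations} is the specialization to $r=s=0$ (and all $r_k=0$ in the symmetrized Serre sums) of a defining relation of $\mathfrak s$. The symmetrized Serre relation with all indices zero is $q!\,(\operatorname{ad}E^\pm_{i,0})^{q}(E^\pm_{j,0})$, and we divide by $q!$. The quartic relation differs from \eqref{eq:classical-quartic-odd-relations} only by the nonzero super-skew-symmetry sign noted after \eqref{eq:affine-degree-derivation}. For $\pi_0$, we send every generator of positive weight to zero and check each relation of \eqref{eq:classical-current-relations}--\eqref{eq:classical-quartic-odd-relations}. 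A relation all of whose terms involve a generator of positive weight maps to $0=0$. A relation all of whose indices vanish maps to a relation of $\mathfrak a_{\mathrm{CS}}$.

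The mixed cases need separate attention. Consider $[E^+_{i,r},E^-_{j,s}]=\delta_{ij}H_{i,r+s}$: if $r+s>0$, then one side of the bracket has positive weight, so both sides map to zero. In the difference relations, both sides involve an index $\ge1$, so both vanish. In the Serre sums with some $r_k>0$, every summand contains a positive-weight generator. The quartic relations with $r$ or $s$ positive vanish in the same way. Since $\pi_0\iota_0$ fixes the generators $e_i^\pm,k_i$, it is the identity.

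For \eqref{eq:current-degree-zero-identification}, the grading of $\mathfrak s$ is by total generator weight, so $\mathfrak s_0$ is spanned by brackets of weight-zero generators. Hence $\iota_0$ surjects onto $\mathfrak s_0$. Since $\pi_0\iota_0=\Id$, $\iota_0$ is also injective, and therefore $\mathfrak s_0\cong\mathfrak a_{\mathrm{CS}}$. The identification $\mathfrak a_{\mathrm{CS}}\cong\widehat{\g}_{\mathrm U}$ is Ueda's \cite[Proposition~2.4]{UedaConstruction}, realized by $\Xi$ in \eqref{eq:affine-chevalley-normalization}. We verify that $\Xi$ respects the normalization using \eqref{eq:affine-node-central-bracket}, \eqref{eq:finite-node-chevalley-bracket} and \eqref{eq:matrix-unit-superbracket}. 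The image of $\Xi$ lies in the derived algebra $\widehat{\g}_{\mathrm U}$ by \eqref{eq:affine-derived-subalgebra}, and this is the form in which the cited presentation is stated.

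The main obstacle is this last step: matching conventions with Ueda's statement. Ueda's result may present $\widetilde{\g}_{\mathrm U}$, or a presentation including $d$, rather than $\widehat{\g}_{\mathrm U}$. It may also use a different sign in the quartic relation or a different normalization of $e_0^\pm$. If it presents $\widetilde{\g}_{\mathrm U}$, we pass to derived subalgebras via \eqref{eq:affine-derived-subalgebra}. If necessary, we compose with the rescaling automorphism of the generators to transport the normalization. The rest of the argument is a routine relation check.
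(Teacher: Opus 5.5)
Your proposal is correct and follows the paper's proof essentially step for step. You verify $\iota_0$ by specializing the relations of $\mathfrak s$ to zero current indices (dividing the symmetrized Serre relation by $q!$ and absorbing the quartic super-skew sign), observe that every relation with a positive index dies under $\pi_0$, deduce $\pi_0\iota_0=\Id$, use nonnegativity of the grading to get $\mathfrak s_0=\operatorname{im}\iota_0$, and cite Ueda's Definition~2.3 and Proposition~2.4 for $\mathfrak a_{\mathrm{CS}}\cong\widehat{\g}_{\mathrm U}$; your contingency about Ueda presenting $\widetilde{\g}_{\mathrm U}$ is unnecessary, since the paper defines $\mathfrak a_{\mathrm{CS}}$ as exactly the $d$-free presentation \eqref{eq:affine-chevalley-relations}.
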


\begin{proof}
Putting every current index equal to zero in \eqref{eq:classical-current-relations} gives the first three
relations of \eqref{eq:affine-chevalley-relations}. A symmetrized Serre relation then equals
\begin{equation}
 (1+|a_{ij}|)!
 (\operatorname{ad}E_{i,0}^\pm)^{1+|a_{ij}|}(E_{j,0}^\pm)=0.
\label{eq:zero-mode-serre-specialization}
\end{equation}
The isotropic relations agree, and the quartic relations agree after the
super-skew-symmetry adjustment described above. Thus $\iota_0$ is well defined.

For $\pi_0$, the root--coroot relation maps to
\begin{equation}
 \pi_0([E_{i,r}^+,E_{j,s}^-]-\delta_{ij}H_{i,r+s})
 =\begin{cases}
 [e_i^+,e_j^-]-\delta_{ij}k_i,&r=s=0,\\
 0,&r+s>0,
 \end{cases}
\label{eq:retraction-root-bracket-check}
\end{equation}
and both expressions vanish. Similarly,
\begin{equation}
 \pi_0([H_{i,0},E_{j,s}^\pm]\mp a_{ij}E_{j,s}^\pm)
 =\begin{cases}
 [k_i,e_j^\pm]\mp a_{ij}e_j^\pm,&s=0,\\
 0,&s>0.
 \end{cases}
\label{eq:retraction-cartan-action-check}
\end{equation}
Every term of a difference relation contains a generator of positive
current degree, so its image is zero. A Serre, isotropic, or quartic relation
maps either to its current-degree-zero counterpart or to zero. Hence
$\pi_0$ is well defined and $\pi_0\iota_0=\Id$.

Finally, $\mathfrak s$ is a quotient of a nonnegatively graded free Lie
superalgebra by homogeneous relations. A degree-zero Lie word uses only
degree-zero generators, so $\mathfrak s_0=\operatorname{im}\iota_0$ and
$\mathfrak s_0\cong\mathfrak a_{\mathrm{CS}}$. We now use
\cite[Definition~2.3 and Proposition~2.4]{UedaConstruction} to identify this Chevalley--Serre
algebra with the concrete loop algebra \eqref{eq:affine-loop-model} through the normalized
assignment \eqref{eq:affine-chevalley-normalization}. This gives the remaining isomorphism in \eqref{eq:current-degree-zero-identification}.
\end{proof}

Applying the enveloping functor gives
\begin{equation}
 U(\mathfrak a_{\mathrm{CS}})
  \xrightarrow{\ U(\iota_0)\ }U(\mathfrak s)
  \xrightarrow{\ U(\pi_0)\ }U(\mathfrak a_{\mathrm{CS}}),
 \qquad U(\pi_0)U(\iota_0)=\Id.
\label{eq:enveloping-retraction}
\end{equation}
Equation \eqref{eq:enveloping-retraction} makes $U(\iota_0)$ a split injection. Thus the enveloping
algebra of the current-degree-zero subalgebra embeds into $U(\mathfrak s)$. By Lemma~\ref{lem:current-degree-zero-identification}, $\iota_0$ is an isomorphism from $\mathfrak a_{\mathrm{CS}}$ onto $\mathfrak s_0$. Identify $\widehat{\g}_{\mathrm U}$ with $(\mathfrak T_+)_0$ by $Xz^k\mapsto X\otimes z^k$ and $c\mapsto\kappa_{0,0}=\overline{z^{-1}dz}$. Under this identification, the composition
\begin{equation}
 \Xi\circ\iota_0^{-1}:\mathfrak s_0\xrightarrow{\sim}(\mathfrak T_+)_0
 \hookrightarrow\mathfrak T_+
\label{eq:rho-current-degree-zero-restriction}
\end{equation}
agrees on the current-degree-zero generators with the assignment \eqref{eq:finite-generator-assignment}--\eqref{eq:affine-generator-assignment}.

\begin{lemma}\label{lem:classical-map-extension}
The assignments \eqref{eq:finite-generator-assignment}--\eqref{eq:affine-generator-assignment} extend uniquely to a surjective graded Lie superalgebra homomorphism
$\rho:\mathfrak s\twoheadrightarrow\Tplus$. Its restriction to $\mathfrak s_0$ is the map \eqref{eq:rho-current-degree-zero-restriction}.
\end{lemma}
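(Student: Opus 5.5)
Since $\mathfrak s$ is presented by generators and relations, I would define $\rho$ on the free Lie superalgebra by \eqref{eq:finite-generator-assignment}--\eqref{eq:affine-generator-assignment} and check that every defining relation of $\mathfrak s$ maps to zero in $\Tplus$; uniqueness is then automatic. The assignment is homogeneous: $E^\pm_{i,r}$ and $H_{i,r}$ go to elements of current degree $r$, including the central term $\overline{z^{-1}u^r\,dz}$. Hence the extension will be graded. In current degree zero the images agree with $\Xi\circ\iota_0^{-1}$, as recorded after \eqref{eq:rho-current-degree-zero-restriction}, so the restriction statement follows once $\rho$ exists.

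\textbf{Relations.} The key observation is that each generator image has the form $X_i^\pm\otimes z^{\pm\delta_{i0}}u^r$, and each $H_{i,r}$ maps to $K_i\otimes u^r$ plus a central term. Using \eqref{eq:universal-central-extension-bracket}, the bracket of two such images is $[X,Y]\otimes fg+\operatorname{str}(XY)\overline{g\,df}$.

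\begin{enumerate}[(a)]
\item \emph{Serre, isotropic and quartic relations.} All entries are $E^+$'s, or all are $E^-$'s. The matrix parts are then strictly upper, resp.\ lower, triangular, or involve $E_{N1}$ together with upper ones. Since the $u$-powers only multiply, every bracket reduces to a current-independent matrix identity times $z^\bullet u^{\sum r}$. For the central term, $\operatorname{str}(XY)=0$ for two positive root vectors: $E_{N1}$ pairs nontrivially only with $E_{1N}$, which never occurs among the $E^+$. So these relations reduce to the same relations in $\widehat{\g}_{\mathrm U}$ (or in $\mathfrak g\otimes\mathbb C[z^{\pm1}]$), which hold by Lemma~\ref{lem:current-degree-zero-identification}.
\item \emph{Difference relations.} Both sides depend on $u^{r+s+1}$ only through the product, and the central terms agree. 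For instance, $\operatorname{str}(K_iX_j)\,\overline{z^{\bullet}u^{s}\,d(u^{r+1})}$ versus the shifted version; here $\operatorname{str}(K_iX_j^\pm)=0$ anyway, and $\operatorname{str}(X_iX_j)=0$ for same-sign root vectors. So both sides coincide.
\item \emph{$[H,H]=0$ and $[H_{i,0},E]$.} These follow from diagonal matrices commuting, the central parts being central, and the $u^0$ Cartan action.
\item \emph{$[E^+_{i,r},E^-_{j,s}]=\delta_{ij}H_{i,r+s}$.} This is the one place where the central term matters, and I expect it to be the main computation. For $i=j=0$ we get $[E_{N1}\otimes zu^r,-E_{1N}\otimes z^{-1}u^s]=(-E_{11}-E_{NN})\otimes u^{r+s}+\operatorname{str}(-E_{N1}E_{1N})\,\overline{z^{-1}u^s\,d(zu^r)}$. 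Expanding $d(zu^r)=u^r\,dz+rzu^{r-1}\,du$, the second piece is $\overline{r u^{r+s-1}\,du}=\overline{d(\tfrac{r}{r+s}u^{r+s})}$, which is exact when $r+s>0$. The remaining term is $\overline{z^{-1}u^{r+s}\,dz}$, matching \eqref{eq:affine-generator-assignment}. For $1\le i<N$ the supertrace term is $\operatorname{str}(E_{i,i+1}E_{i+1,i})(\pm1)\,\overline{u^s\,d(u^r)}$, which is exact; when $r=s=0$ it vanishes. For $i\neq j$ the matrix bracket vanishes, and so does the supertrace pairing.
\end{enumerate}

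\textbf{Surjectivity.} The image of $\rho$ is a subalgebra containing $E_{i,i+1}\otimes u^r$, $E_{i+1,i}\otimes u^r$, $E_{N1}\otimes zu^r$ and $E_{1N}\otimes z^{-1}u^r$. Bracketing $E_{N1}\otimes z$ with finite root vectors generates all $E_{ab}\otimes z^{\pm1}u^r$. Iterating then gives $E_{ab}\otimes z^ku^r$ for all $k\in\mathbb Z$ and $a\ne b$, using chains $a\to c\to b$ with $N\ge5$. The differences of diagonal brackets give all Cartan elements, so the image surjects onto $\mathfrak g\otimes A_+$ under $\pi_{\mathrm{uce}}$. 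Since $\mathfrak g\otimes A_+$ is perfect, the derived algebra of $\Tplus$ is all of $\Tplus$; indeed \eqref{eq:steinberg-central-images} exhibits every central $\overline{g\,df}$ as a commutator. More directly, $\psi$ in \eqref{eq:steinberg-differential-isomorphism} shows that $\Tplus$ is generated by the $E_{ab}\otimes f$ with $a\ne b$. Hence the image is all of $\Tplus$.
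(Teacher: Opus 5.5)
Your proposal is correct and follows essentially the paper's proof: a relation-by-relation check in $\Tplus$ via \eqref{eq:universal-central-extension-bracket}, then surjectivity because the image contains every $E_{ab}\otimes z^ku^r$ with $a\ne b$, and these generate $\Tplus$ (the paper phrases this as $q(S)=\g\otimes A_+$ plus perfectness, $\Tplus=[S+Z(\Tplus),S+Z(\Tplus)]\subseteq S$). The one slip is in (c): $[K_iu^r,K_ju^s]$ carries the cocycle $\str(K_iK_j)\,\overline{u^s\,d(u^r)}$ with $\str(K_iK_j)\ne0$ in general, so ``diagonal matrices commute'' is not enough; you need the same exactness $\overline{u^s\,d(u^r)}=\tfrac{r}{r+s}\overline{d(u^{r+s})}=0$ (for $r+s>0$, and trivially zero when $r=0$) that you already use in (d), exactly as in \eqref{eq:rho-cartan-commutativity}.
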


\begin{proof}
Introduce the matrix representatives of the simple Chevalley generators:
\begin{equation}
\begin{aligned}
 D_i&=(-1)^{p(i)}E_{ii}-(-1)^{p(i+1)}E_{i+1,i+1},
 &X_i^+&=E_{i,i+1},
 &X_i^-&=(-1)^{p(i)}E_{i+1,i}\quad(1\le i<N),\\
 D_0&=-E_{11}-E_{NN},
 &X_0^+&=E_{N1},
 &X_0^-&=-E_{1N},\\
 \ell_i^\pm&=0\quad(1\le i<N),
 &\ell_0^+&=1,
 &\ell_0^-&=-1.
\end{aligned}
\label{eq:rho-matrix-and-loop-data}
\end{equation}
The assignments \eqref{eq:finite-generator-assignment}--\eqref{eq:affine-generator-assignment} are therefore
\begin{equation}
 \rho(E_{i,r}^\pm)=X_i^\pm z^{\ell_i^\pm}u^r,
 \qquad
 \rho(H_{i,r})=D_i u^r+\delta_{i0}z^{-1}u^r dz.
\label{eq:rho-generator-formulas}
\end{equation}
We repeatedly use the elementary identities
\begin{equation}
 E_{ab}E_{cd}=\delta_{bc}E_{ad},
 \qquad
 \str(E_{ab}E_{cd})=\delta_{bc}\delta_{ad}(-1)^{p(a)},
\label{eq:matrix-unit-product-supertrace}
\end{equation}
and the superbracket formula \eqref{eq:matrix-unit-superbracket}.

\noindent For the Cartan--Cartan relations, the $D_i$ are even and diagonal, so $[D_i,D_j]=0$. The only possible nonzero term is the cocycle
\begin{equation}
\begin{aligned}
[D_i u^r,D_j u^s]
 &=\str(D_iD_j)\,\overline{u^s\,d(u^r)}\\
 &=r\,\str(D_iD_j)\,\overline{u^{r+s-1}\,du}=0.
\end{aligned}
\label{eq:rho-cartan-commutativity}
\end{equation}
When $r+s>0$, the last class is $\frac{r}{r+s}\overline{d(u^{r+s})}=0$; when $r+s=0$, one has $r=s=0$. The differential part of $\rho(H_{0,r})$ is central.

For $1\le i<N$, direct multiplication gives
\begin{equation}
\begin{aligned}
[X_i^+,X_i^-]&=(-1)^{p(i)}E_{ii}-(-1)^{p(i+1)}E_{i+1,i+1}=D_i,\\
\str(X_i^+X_i^-)&=(-1)^{2p(i)}=1.
\end{aligned}
\label{eq:finite-simple-matrix-pairing}
\end{equation}
Consequently,
\begin{equation}
\begin{aligned}
[\rho(E_{i,r}^+),\rho(E_{i,s}^-)]
 &=D_i u^{r+s}+r\,\overline{u^{r+s-1}\,du}\\
 &=D_i u^{r+s}=\rho(H_{i,r+s}),
\end{aligned}
\label{eq:rho-finite-opposite-root-bracket}
\end{equation}
where exactness was just checked. At the affine vertex, for $f=zu^r$ and $g=z^{-1}u^s$,
\begin{equation}
\begin{aligned}
\overline{g\,df}
 &=\overline{z^{-1}u^{r+s}\,dz}+r\,\overline{u^{r+s-1}\,du}\\
 &=\overline{z^{-1}u^{r+s}\,dz}.
\end{aligned}
\label{eq:affine-node-differential-cocycle}
\end{equation}
Since $X_0^{\pm}$ are odd,
\begin{equation}
\begin{aligned}
[X_0^+,X_0^-]&=[E_{N1},-E_{1N}]=-E_{NN}-E_{11}=D_0,\\
\str(X_0^+X_0^-)&=\str(-E_{NN})=1.
\end{aligned}
\label{eq:affine-simple-matrix-pairing}
\end{equation}
Thus
\begin{equation}
[\rho(E_{0,r}^+),\rho(E_{0,s}^-)]
 =D_0u^{r+s}+\overline{z^{-1}u^{r+s}\,dz}=\rho(H_{0,r+s}).
\label{eq:rho-affine-opposite-root-bracket}
\end{equation}
If $i\ne j$, both products $X_i^+X_j^-$ and $X_j^-X_i^+$ vanish, as does their supertrace pairing. Hence
\begin{equation}
[\rho(E_{i,r}^+),\rho(E_{j,s}^-)]=0\qquad(i\ne j).
\label{eq:rho-distinct-opposite-root-bracket}
\end{equation}
We compute the Cartan action explicitly. Let $d_i(a)$ be the coefficient of $E_{aa}$ in $D_i$. Then
\begin{equation}
[D_i,E_{ab}]=(d_i(a)-d_i(b))E_{ab}.
\label{eq:diagonal-matrix-action}
\end{equation}
The definition \eqref{eq:cyclic-cartan-matrix} gives
\begin{equation}
\begin{aligned}
d_i(j)-d_i(j+1)&=a_{ij}\qquad(1\le j<N),\\
d_i(N)-d_i(1)&=a_{i0}.
\end{aligned}
\label{eq:cartan-matrix-diagonal-differences}
\end{equation}
For the wrap-around cases this reads explicitly
\[
\begin{aligned}
d_0(1)-d_0(2)&=-1=a_{01}, &d_0(N-1)-d_0(N)&=1=a_{0,N-1},\\
d_1(N)-d_1(1)&=-1=a_{10}, &d_{N-1}(N)-d_{N-1}(1)&=1=a_{N-1,0}.
\end{aligned}
\]
A diagonal matrix is supertrace-orthogonal to every off-diagonal matrix, so no cocycle occurs. Therefore
\begin{equation}
[\rho(H_{i,0}),\rho(E_{j,s}^{\pm})]=\pm a_{ij}\rho(E_{j,s}^{\pm}).
\label{eq:rho-zero-mode-cartan-action}
\end{equation}
The same calculation proves both current-index shift relations. The central part of $\rho(H_{0,r})$ commutes with everything, and the relevant supertrace pairings vanish, so
\begin{equation}
\begin{aligned}
[\rho(H_{i,r+1}),\rho(E_{j,s}^{\pm})]
 &=\pm a_{ij}X_j^{\pm}z^{\ell_j^{\pm}}u^{r+s+1}\\
 &=[\rho(H_{i,r}),\rho(E_{j,s+1}^{\pm})].
\end{aligned}
\label{eq:rho-cartan-current-difference}
\end{equation}
Also $\str(X_i^{\pm}X_j^{\pm})=0$ for all simple vertices. Hence
\begin{equation}
\begin{aligned}
[\rho(E_{i,r+1}^{\pm}),\rho(E_{j,s}^{\pm})]
 &=[X_i^{\pm},X_j^{\pm}]z^{\ell_i^{\pm}+\ell_j^{\pm}}u^{r+s+1}\\
 &=[\rho(E_{i,r}^{\pm}),\rho(E_{j,s+1}^{\pm})].
\end{aligned}
\label{eq:rho-root-current-difference}
\end{equation}
It remains to check the three higher relation families. If $a_{ij}=0$, then $[X_i^{\pm},X_j^{\pm}]=0$. At adjacent finite vertices,
\begin{equation}
[E_{i,i+1},E_{i+1,i+2}]=E_{i,i+2},\qquad [E_{i,i+1},E_{i,i+2}]=0,
\label{eq:finite-serre-matrix-check}
\end{equation}
while at the two affine edges
\begin{equation}
\begin{aligned}
[E_{N1},E_{12}]&=E_{N2}, &[E_{N1},E_{N2}]&=0,\\
[E_{N-1,N},E_{N1}]&=E_{N-1,1}, &[E_{N-1,N},E_{N-1,1}]&=0.
\end{aligned}
\label{eq:affine-serre-matrix-check}
\end{equation}
The reversed orientations and negative generators give the transposed identities, up to the scalar signs in \eqref{eq:rho-matrix-and-loop-data}; the second bracket remains zero. Every supertrace pairing here is zero, and scalar monomials only multiply the identities. Thus each summand in \eqref{eq:classical-serre-relations} maps to zero.

For the two isotropic vertices the root matrices are odd and satisfy
\begin{equation}
(X_0^{\pm})^2=(X_m^{\pm})^2=0,\qquad
\str((X_0^{\pm})^2)=\str((X_m^{\pm})^2)=0.
\label{eq:isotropic-matrix-squares}
\end{equation}
For scalar monomials $f,g$ this gives
\begin{equation}
[X_i^{\pm}f,X_i^{\pm}g]
 =2(X_i^{\pm})^2fg+\str((X_i^{\pm})^2)\,\overline{g\,df}=0,
\qquad i=0,m.
\label{eq:rho-isotropic-relation}
\end{equation}
At the finite odd vertex, the positive quartic relation becomes
\begin{equation}
\begin{aligned}
[X_{m-1}^+,X_m^+]&=E_{m-1,m+1}, &[X_m^+,X_{m+1}^+]&=E_{m,m+2},\\
[E_{m-1,m+1},E_{m,m+2}]&=0, &\str(E_{m-1,m+1}E_{m,m+2})&=0.
\end{aligned}
\label{eq:finite-positive-quartic-check}
\end{equation}
At the affine odd vertex it becomes
\begin{equation}
\begin{aligned}
[X_{N-1}^+,X_0^+]&=E_{N-1,1}, &[X_0^+,X_1^+]&=E_{N2},\\
[E_{N-1,1},E_{N2}]&=0, &\str(E_{N-1,1}E_{N2})&=0.
\end{aligned}
\label{eq:affine-positive-quartic-check}
\end{equation}
The omitted $z$-factor in each affine inner bracket is the same and does not affect the matrix calculation. For the negative generators,
\begin{equation}
\begin{aligned}
[X_{m-1}^-,X_m^-]&=-E_{m+1,m-1}, &[X_m^-,X_{m+1}^-]&=E_{m+2,m},\\
[X_{N-1}^-,X_0^-]&=-E_{1,N-1}, &[X_0^-,X_1^-]&=E_{2N}.
\end{aligned}
\label{eq:negative-quartic-matrix-brackets}
\end{equation}
In each row the two matrix units have zero product in both orders and zero supertrace pairing. Restoring the powers of $u$ and $z^{\pm1}$ only multiplies these zero identities. Thus the prescribed images satisfy all defining relations \eqref{eq:classical-current-relations}--\eqref{eq:classical-quartic-odd-relations}, so the assignment extends to a Lie superalgebra homomorphism $\rho:\mathfrak s\to\mathfrak T_+$. This extension is unique because the elements $E^\pm_{i,r},H_{i,r}$ generate $\mathfrak s$. Their prescribed images have current degree $r$, so $\rho$ is graded. On $\mathfrak s_0$ it agrees with \eqref{eq:rho-current-degree-zero-restriction}, since both maps have the same values on the current-degree-zero generators.

Let $q:\Tplus\to\g\otimes A_+$ be the central quotient and $S=\operatorname{im}\rho$. By \eqref{eq:rho-current-degree-zero-restriction}, the current-degree-zero generators give the entire affine algebra, so $q(S)$ contains $\g\otimes\mathbb C[z^{\pm1}]$. For fixed $r>0$, the intersection of $q(S)$ with $\g\otimes\mathbb C[z^{\pm1}]u^r$ is stable under the adjoint action of this loop algebra and contains a nonzero simple-root vector. Bracketing with constant matrices generates all of $\g$ because $\g$ is simple, and bracketing with loop matrices supplies every power of $z$. Therefore
\begin{equation}
q(S)\supseteq\g\otimes\mathbb C[z^{\pm1}]u^r\qquad(r\ge0),
\label{eq:rho-image-current-components}
\end{equation}
and hence $q(S)=\g\otimes A_+$ and $\Tplus=S+Z(\Tplus)$.

Perfectness can also be seen explicitly. If $X=[A,B]\in\g$, then
\begin{equation}
[A\otimes1,B\otimes f]=X\otimes f,
\label{eq:perfect-current-algebra-generation}
\end{equation}
so the current summand is derived. Choose commuting even Cartan elements $D,D'$ with $\str(DD')=1$; this is possible by nondegeneracy of the Cartan form. Then
\begin{equation}
[D\otimes f,D'\otimes g]=\overline{g\,df}.
\label{eq:central-differential-generation}
\end{equation}
These classes span $\Omega^1_{A_+/\mathbb C}/dA_+$, so the centre is derived as well. Thus $\Tplus$ is perfect, and
\[
\Tplus=[\Tplus,\Tplus]
 =[S+Z(\Tplus),S+Z(\Tplus)]=[S,S]\subseteq S.
\]
Therefore $S=\Tplus$. This proves the surjectivity of $\rho$; its injectivity will be established by the inverse-map construction in Proposition~\ref{prop:classical-presentation-isomorphism}.
\end{proof}

To prove injectivity of $\rho$, we use the standard description of the real affine roots and the commutator relation established in Lemma~\ref{lem:real-root-cartan-identity}. We first fix the root conventions and prove separatedness of the degreewise completion.

\medskip
\noindent\textbf{Affine-root conventions.} Let
\[
\mathfrak h=\left\{D=\operatorname{diag}(d_1,\ldots,d_N):
 \sum_{a=1}^{N}(-1)^{p(a)}d_a=0\right\}
\]
be the finite Cartan subalgebra. Write $\varepsilon_a(D)=d_a$, let $\delta$ vanish on $\mathfrak h\oplus\mathbb Cc$, and put $\delta(d)=1$. With the loop variable $z$ used in \eqref{eq:affine-loop-model}, the simple affine roots and the real roots are
\begin{equation}
\begin{gathered}
\alpha_i=\varepsilon_i-\varepsilon_{i+1}\quad(1\le i<N),
\qquad \alpha_0=\delta+\varepsilon_N-\varepsilon_1,\\
\Delta^{\mathrm{re}}=\{\varepsilon_a-\varepsilon_b+k\delta:a\ne b,\ k\in\mathbb Z\}.
\end{gathered}
\label{eq:affine-real-roots}
\end{equation}
Indeed, for $D=\operatorname{diag}(d_1,\ldots,d_N)$,
\begin{equation}
[D,E_{ab}z^k]=(d_a-d_b)E_{ab}z^k,\qquad
[d,E_{ab}z^k]=kE_{ab}z^k,
\label{eq:affine-root-weight-actions}
\end{equation}
so $E_{ab}z^k$ spans the root space of $\varepsilon_a-\varepsilon_b+k\delta$. We use the positive system
\begin{equation}
\begin{aligned}
\Delta^{\mathrm{re}}_+
 &=\{\varepsilon_a-\varepsilon_b+k\delta:k>0,\ a\ne b\}
 \sqcup\{\varepsilon_a-\varepsilon_b:a<b\},\\
\Delta_+&=\Delta^{\mathrm{re}}_+\sqcup\{k\delta:k>0\}.
\end{aligned}
\label{eq:positive-affine-roots}
\end{equation}
The symmetric root-lattice form used below is characterized by $(\alpha_i,\alpha_j)=a_{ij}$; explicitly on finite root differences,
\begin{equation}
\begin{gathered}
(\varepsilon_a-\varepsilon_b,\varepsilon_c-\varepsilon_d)
 =(-1)^{p(a)}(\delta_{ac}-\delta_{ad})-(-1)^{p(b)}(\delta_{bc}-\delta_{bd}),\\
(\delta,\delta)=(\delta,\varepsilon_a-\varepsilon_b)=0.
\end{gathered}
\label{eq:affine-root-bilinear-form}
\end{equation}
The invariant affine form is normalized by
\begin{equation}
\begin{gathered}
\kappa(Xz^r,Yz^s)=\delta_{r+s,0}\str(XY),\qquad \kappa(c,d)=1,\\
\kappa(c,c)=\kappa(d,d)=0,
\end{gathered}
\label{eq:affine-invariant-form}
\end{equation}
with $c,d$ orthogonal to the loop part. In particular, $(-1)^{p(a)}E_{ba}z^{-k}$ is dual to $E_{ab}z^k$. Thus all root pairings used in \cite[Proposition 4.26]{UedaConstruction} are fixed by \eqref{eq:affine-real-roots}--\eqref{eq:affine-invariant-form}.

\begin{lemma}\label{lem:degreewise-completion-separatedness}
Let
\begin{equation}
\mathfrak a=\bigoplus_{q\in\mathbb Z}\mathfrak a(q),\qquad
[\mathfrak a(r),\mathfrak a(s)]\subseteq\mathfrak a(r+s),
\label{eq:graded-lie-algebra}
\end{equation}
be a graded Lie superalgebra, and let $A=U(\mathfrak a)$ with its induced grading. For $L\in\mathbb Z_{\ge0}$, set
\begin{equation}
T_LA(q)=\sum_{s>L}A(q-s)A(s),\qquad
\widehat A(q)=\varprojlim_L A(q)/T_LA(q).
\label{eq:right-tail-degreewise-completion}
\end{equation}
Here the sum means the linear span of the indicated products, and the inverse limit is taken as $L$ tends to infinity through $\mathbb Z_{\ge0}$. Choose a homogeneous basis of $\mathfrak a$ and order it so that negative-degree elements come first, degree-zero elements come next, and positive-degree elements come last. If $\mathcal M=y_1\cdots y_d$ is a normally ordered PBW monomial, define
\begin{equation}
\deg_{>0}(\mathcal M)=\sum_{\deg y_j>0}\deg y_j.
\label{eq:positive-degree-of-pbw-monomial}
\end{equation}
Then, for every $q\in\mathbb Z$ and $L\in\mathbb Z_{\ge0}$,
\begin{equation}
T_LA(q)=\operatorname{span}\{\mathcal M:\deg\mathcal M=q,\ \deg_{>0}(\mathcal M)>L\}.
\label{eq:right-tail-pbw-description}
\end{equation}
Consequently,
\begin{equation}
\bigcap_{L\ge0}T_LA(q)=0,
\label{eq:right-tail-separatedness}
\end{equation}
and the canonical map $A\to\bigoplus_q\widehat A(q)$ is injective.
\end{lemma}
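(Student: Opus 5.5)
The plan is to use the PBW theorem for $U(\mathfrak a)$ with the chosen order: negative-degree basis elements first, then degree zero, then positive degree. The normally ordered monomials $\mathcal M=y_1\cdots y_d$ of degree $q$ form a basis of $A(q)$. Write $R_L(q)$ for the right-hand side of \eqref{eq:right-tail-pbw-description}. We prove $T_LA(q)=R_L(q)$ by two inclusions, and then deduce separatedness.

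For $R_L(q)\subseteq T_LA(q)$, let $\mathcal M$ have degree $q$ and $\deg_{>0}(\mathcal M)>L$. Because of the ordering, $\mathcal M=\mathcal M_{\le0}\mathcal M_{>0}$, where $\mathcal M_{>0}$ is the product of the positive-degree factors. This factor has degree $s=\deg_{>0}(\mathcal M)>L$, and the remaining factor has degree $q-s$. Hence $\mathcal M\in A(q-s)A(s)\subseteq T_LA(q)$.

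For $T_LA(q)\subseteq R_L(q)$, it suffices to show that a product $ab$ with $a\in A(q-s)$, $b\in A(s)$ and $s>L$ lies in $R_L(q)$. First, $b\in R_L(s)$. For a monomial of degree $s$ we have $\deg_{>0}\ge s$, since the nonpositive factors only lower the total degree. So every PBW monomial of degree $s$ already has $\deg_{>0}\ge s>L$. It then remains to prove the key claim that $R_L:=\bigoplus_q R_L(q)$ is a left ideal of $A$.

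The main step is this left-ideal claim. By linearity and induction on length, it is enough to show that $x\mathcal M\in R_L$ for a homogeneous basis vector $x$ and a normal monomial $\mathcal M$ with $\deg_{>0}\mathcal M>L$. I would use the standard straightening induction on the length and disorder of words. The key observation is that $\deg_{>0}$ does not decrease under straightening. Moving $x$ rightward past $y_j$ produces the swapped word plus the word with $[x,y_j]$ in place of the pair. If both $x$ and $y_j$ have positive degree, then $[x,y_j]$ has degree $\deg x+\deg y_j>0$, so the positive degree is preserved. If $y_j$ has positive degree and $x$ does not, then $x$ lies left of $y_j$ in normal order, so no swap with $y_j$ is needed. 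Any bracket that merges a nonpositive factor with a positive factor $y$ has degree at least $\deg y$ whenever it is positive. The bracket may also land in nonpositive degree, which would lose positive degree, so this case needs care.

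To avoid tracking losses, I would prove the cleaner invariant that, for any word $w$, all monomials in its normal-order expansion satisfy $\deg_{>0}\ge \sum_{\deg w_i>0}\deg w_i-\sum_{\deg w_i<0}|\deg w_i|$. That bound alone is too weak, so I would use a right-multiplication description instead. Writing $A=U(\mathfrak a_{\le0})\otimes U(\mathfrak a_{>0})$ by PBW, we get $R_L=U(\mathfrak a_{\le0})\cdot U(\mathfrak a_{>0})_{>L}$, where the subscript denotes the part of degree $>L$. Moreover $T_LA=A\cdot U(\mathfrak a_{>0})_{>L}$, since any $b\in A(s)$ decomposes as $\sum c_i b_i$ with $b_i\in U(\mathfrak a_{>0})$ of degree $\ge s$. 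Then $A\cdot U(\mathfrak a_{>0})_{>L}=U(\mathfrak a_{\le0})U(\mathfrak a_{>0})U(\mathfrak a_{>0})_{>L}\subseteq U(\mathfrak a_{\le0})U(\mathfrak a_{>0})_{>L}$, because $U(\mathfrak a_{>0})$ is a graded subalgebra of positive degrees. This settles both inclusions without any straightening, and it is the route I would actually follow.

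Separatedness then follows directly. An element of $A(q)$ has finitely many PBW monomials in its expansion, each with finite $\deg_{>0}$. If the element lies in every $R_L(q)$, then taking $L$ larger than all these values forces every coefficient to vanish. So $\bigcap_L T_LA(q)=0$, which says that $A(q)\to\widehat A(q)$ is injective for every $q$. Summing over $q$ gives injectivity of $A\to\bigoplus_q\widehat A(q)$.
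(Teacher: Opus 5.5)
Your final route is correct and is essentially the paper's argument: the paper also factors a degree-$s$ normal monomial as $V_{<0}V_0V_{>0}$ with $\deg V_{>0}\ge s>L$, regroups $UV=(UV_{<0}V_0)V_{>0}$, and notes that restoring normal order only involves brackets inside the positive block. That last step is exactly your observation that $U(\mathfrak a_{>0})$ is a nonnegatively graded subalgebra, and the paper handles the reverse inclusion (splitting off the positive block) and separatedness (a finite PBW expansion escapes $T_L$ for large $L$) just as you do, so your $A=U(\mathfrak a_{\le0})\otimes U(\mathfrak a_{>0})$ formulation is a tidier packaging of the same idea. Note only that your discarded remark that a bracket of a nonpositive factor with a positive $y$ has degree at least $\deg y$ is false for negative-degree factors, but nothing in your final argument uses it.
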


\begin{proof}
Since $T_{L+1}A(q)\subseteq T_LA(q)$, the natural quotient maps form the inverse system in \eqref{eq:right-tail-degreewise-completion}. These are vector-space quotients; the spaces $T_LA(q)$ need not be ideals.

The ordinary PBW theorem for Lie superalgebras gives the normally ordered supermonomials described in the statement as a basis of $A$; an odd basis element occurs with exponent at most one. Thus each element of $A$ has a finite expansion in this homogeneous monomial basis.

The function $\deg_{>0}(M)$ sums the degrees of the positive-degree factors in a normally ordered PBW monomial $M$. In the affine application, these are the positive loop indices. The total degree $\deg M$ sums the degrees of all factors, so positive and negative loop indices can cancel in that sum. The auxiliary function $\deg_{>0}$ describes the tail subspaces $T_LA(q)$ in \eqref{eq:right-tail-pbw-description}, which determine the standard completion through the quotient vector spaces in \eqref{eq:right-tail-degreewise-completion}.

We first prove the inclusion ``$\subseteq$'' in \eqref{eq:right-tail-pbw-description}. Since $A(s)$ is spanned by homogeneous normal PBW monomials, it is enough to take
\[
U\in A(q-s)
\]
and a normal monomial $V\in A(s)$, where $s>L$. Factor $V$ according to the signs of the degrees of its factors:
\begin{equation}
V=V_{<0}V_0V_{>0}.
\label{eq:ordered-monomial-degree-blocks}
\end{equation}
An empty factor in this decomposition is understood to be $1$. Put $r=\deg V_{>0}=\deg_{>0}(V)$. Since
\begin{equation}
s=\deg V=\deg V_{<0}+r\qquad\text{and}\qquad \deg V_{<0}\le0,
\label{eq:right-factor-degree-estimate}
\end{equation}
we have $r\ge s>L$.

Now write
\begin{equation}
UV=(UV_{<0}V_0)V_{>0}.
\label{eq:right-tail-monomial-factorization}
\end{equation}
Expand the parenthesized factor as a finite linear combination of normal PBW monomials. When $V_{>0}$ is appended to any one of these monomials, it is already to the right of all negative- and zero-degree factors. Therefore only positive-degree factors may have to be interchanged in order to restore the chosen order inside the positive block. If their degrees are $a,b>0$, their bracket lies in $\mathfrak a(a+b)$ by \eqref{eq:graded-lie-algebra}. Replacing their product by this bracket therefore preserves the sum of the positive degrees. Consequently every normal monomial occurring in $UV$ has
\begin{equation}
\deg_{>0}(\mathcal M)\ge\deg V_{>0}=r>L.
\label{eq:right-tail-positive-degree-bound}
\end{equation}
It follows that $A(q-s)A(s)$ is contained in the right-hand side of \eqref{eq:right-tail-pbw-description} whenever $s>L$. Summing over $s>L$ proves the required inclusion.

For the reverse inclusion, let $\mathcal M$ be a normal monomial of degree $q$ with $\deg_{>0}(\mathcal M)>L$. Because all its positive-degree factors occur at the right end, it has an exact factorization
\[
\mathcal M=\mathcal M_{\le0}\mathcal M_{>0},
\]
where $\mathcal M_{>0}$ is the product of all positive-degree factors. Its degree is $\deg_{>0}(\mathcal M)$, and hence
\[
\mathcal M\in A\bigl(q-\deg_{>0}(\mathcal M)\bigr)
 A\bigl(\deg_{>0}(\mathcal M)\bigr)\subseteq T_LA(q).
\]
This proves \eqref{eq:right-tail-pbw-description}.

Equation \eqref{eq:right-tail-pbw-description} also describes the topology: modulo $T_LA(q)$ one discards precisely those PBW monomials whose positive-degree sum is greater than $L$. Consequently, if $M_1,M_2,\ldots$ are normally ordered PBW monomials of the same total degree $q$ and $\deg_{>0}(M_r)$ tends to infinity, then $\sum_{r\ge1}M_r$ defines an element of $\widehat A(q)$. For every fixed $L$, all but finitely many summands belong to $T_LA(q)$, so the image in $A(q)/T_LA(q)$ is finite. This makes Ueda's infinite normally ordered quadratic root sums converge in the inverse-limit topology \eqref{eq:right-tail-degreewise-completion}.

The affine loop grading determines this completion, whereas the current filtration \eqref{eq:current-filtration-weights} determines the associated graded algebra in the PBW theorem. On the Yangian generators the loop grading, recorded again in \eqref{eq:yangian-affine-loop-grading}, is
\[
\deg_\ell h_{i,r}=0,\qquad
\deg_\ell x_{i,r}^{\pm}=
\begin{cases}
\pm1,&i=0,\\
0,&i\ne0,
\end{cases}
\]
Now let $0\ne a\in A(q)$. It is a finite linear combination of PBW monomials, so there is an $L$ for which every monomial $\mathcal M$ occurring in $a$ has $\deg_{>0}(\mathcal M)\le L$. Since distinct PBW monomials are linearly independent, \eqref{eq:right-tail-pbw-description} implies $a\notin T_LA(q)$. This proves \eqref{eq:right-tail-separatedness}.

Finally, the kernel of the canonical map $A(q)\to\widehat A(q)$ is exactly $\bigcap_{L\ge0}T_LA(q)$, so this map is injective for every $q$. An element of $A=\bigoplus_q A(q)$ has only finitely many homogeneous components; consequently the direct sum of these maps is injective. This is precisely the separatedness needed later: an equality between ordinary elements of $A$ that holds in the completion already holds in $A$ itself.
\end{proof}

\begin{lemma}\label{lem:real-root-cartan-identity}
Identify the current-degree-zero part of $\mathfrak s$ with
$\widehat{\g}_{\mathrm U}$ by Lemma~\ref{lem:current-degree-zero-identification}. If $\beta$ is a real affine root and
$0\ne x_\beta\in(\widehat{\g}_{\mathrm U})_\beta$, then, for $1\le i,j<N$,
\begin{equation}
 (\alpha_j,\beta)[H_{i,1},x_\beta]
   =(\alpha_i,\beta)[H_{j,1},x_\beta].
\label{eq:real-root-cartan-commutator-identity}
\end{equation}
\end{lemma}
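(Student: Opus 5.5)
The plan is to prove the stronger assertion that for every real root $\beta$ and every $x_\beta\in(\widehat{\g}_{\mathrm U})_\beta$ there is an element $y\in\mathfrak s_1$, depending on $x_\beta$ but not on $i$, such that
\[
[H_{i,1},x_\beta]=(\alpha_i,\beta)\,y\qquad\text{for all }i\in I.
\]
Multiplying the case $i$ by $(\alpha_j,\beta)$ and the case $j$ by $(\alpha_i,\beta)$ then gives \eqref{eq:real-root-cartan-commutator-identity}. The argument takes place entirely inside $\mathfrak s$, since injectivity of $\rho$ is not yet available.

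\textbf{Reduction to Lie words.} Using the standard triangular decomposition of the Chevalley--Serre algebra $\mathfrak a_{\mathrm{CS}}\cong\mathfrak s_0$ from Lemma~\ref{lem:current-degree-zero-identification}, a positive real root vector is a linear combination of Lie words $w=[E^+_{k_1,0},[E^+_{k_2,0},\ldots,E^+_{k_l,0}]\ldots]$ with $\alpha_{k_1}+\cdots+\alpha_{k_l}=\beta$. A negative real root vector is likewise a combination of such words in the $E^-_{k,0}$. The assertion is linear in $x_\beta$, and every word occurring in the combination has weight $\beta$. It therefore suffices to produce, for each such word $w$, an element $y_w$ with $[H_{i,1},w]=(\alpha_i,\beta)y_w$; then $y=\sum c_wy_w$ works. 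Real root spaces are one-dimensional, but this fact is not even needed.

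\textbf{Shifted words.} For a word $w$ and a position $p$, let $w^{(p)}$ be the word obtained by replacing the index $0$ of the $p$-th letter by $1$. The key lemma is that $w^{(p)}$ is independent of $p$; call the common value $y_w$. I would prove this by induction on the length $l$. For $w=[E^\pm_{a,0},X]$ with $X$ a shorter word, the inductive hypothesis already identifies all shifts inside $X$ with $X^{\mathrm{sh}}$. It then remains to show
\[
[E^\pm_{a,1},X]=[E^\pm_{a,0},X^{\mathrm{sh}}].
\]
Write $X=[E^\pm_{b,0},X']$ and expand both sides by the Jacobi identity. The terms $[[E_{a,1},E_{b,0}],X']$ and $[[E_{a,0},E_{b,1}],X']$ agree by the difference relation \eqref{eq:classical-current-relations}. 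The remaining terms $[E_{b,0},[E_{a,1},X']]$ and $[E_{b,0},[E_{a,0},X'^{\mathrm{sh}}]]$ agree by the induction hypothesis applied to the shorter word $[E_{a,0},X']$; the extra $E_{b,0}$ must be carried along consistently with the signs. The base case $l=2$ is exactly the relation $[E^\pm_{a,1},E^\pm_{b,0}]=[E^\pm_{a,0},E^\pm_{b,1}]$.

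I expect this step to be the main obstacle. The supersigns must be tracked carefully, and the induction has to be set up so that it closes, for instance by a double induction on length and on the position of the shifted letter.

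\textbf{Derivation step.} Since $\operatorname{ad}H_{i,1}$ is a superderivation of even parity, applying it to $w$ gives $\sum_p[H_{i,1},E^\pm_{k_p,0}]$ inserted in position $p$. The relations \eqref{eq:classical-current-relations} give
\[
[H_{i,1},E^\pm_{k,0}]=[H_{i,0},E^\pm_{k,1}]=\pm a_{ik}E^\pm_{k,1}=(\alpha_i,\pm\alpha_k)E^\pm_{k,1}.
\]
Hence
\[
[H_{i,1},w]=\sum_p(\alpha_i,\pm\alpha_{k_p})\,w^{(p)}=\Bigl(\alpha_i,\pm\sum_p\alpha_{k_p}\Bigr)y_w=(\alpha_i,\beta)\,y_w,
\]
using the key lemma. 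This completes the plan.
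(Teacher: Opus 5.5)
Your derivation step is correct: $\operatorname{ad}H_{i,1}$ is an even derivation and $[H_{i,1},E^\pm_{k,0}]=\pm a_{ik}E^\pm_{k,1}$. The gap is the key lemma. Shift-independence of $w^{(p)}$ is false in $\mathfrak s$, and your argument never uses that $\beta$ is real, which any correct argument must.

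Here is a counterexample. Let $B=[E^+_{1,0},[E^+_{2,0},\ldots,[E^+_{N-2,0},E^+_{N-1,0}]\ldots]]$ and $w=[E^+_{0,0},B]$, which has weight $\delta$. Under $\rho$, every shift of $B$ maps to $E_{1N}\otimes u$, since strictly upper triangular matrices have zero supertrace pairing. On the other hand, \eqref{eq:universal-central-extension-bracket}, together with $\operatorname{str}(E_{N1}E_{1N})=-1$ and $\overline{d(zu)}=0$, gives
\[
\rho\bigl([E^+_{0,1},B]\bigr)-\rho\bigl([E^+_{0,0},B^{\mathrm{sh}}]\bigr)=[E_{N1}\otimes zu,E_{1N}]-[E_{N1}\otimes z,E_{1N}\otimes u]=\overline{u\,dz}=\kappa_{1,1}\ne0 .
\]
Since $\rho$ is already a homomorphism by Lemma~\ref{lem:classical-map-extension}, this refutes, in $\mathfrak s$, exactly the equality $[E^\pm_{a,1},X]=[E^\pm_{a,0},X^{\mathrm{sh}}]$ that your inductive step asserts (here $a=0$, $X=B$).

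Equivalently, your plan would give $[H_{1,1},w]=(\alpha_1,\delta)y_w=0$. But $\rho([H_{1,1},w])=[D_1\otimes u,(E_{11}+E_{NN})\otimes z]=\overline{z\,du}=-\kappa_{1,1}\ne0$. Moreover, $w$ is the tail of the real-root word $[E^+_{1,0},w]$ of weight $\delta+\alpha_1$. So an induction quantified over all shorter words invokes the false statement even when $\beta$ is real.

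The inductive step also does not close as written. You match the first Jacobi term using $X^{\mathrm{sh}}=[E_{b,1},X']$, but the second using $X^{\mathrm{sh}}=[E_{b,0},X'^{\mathrm{sh}}]$. With one consistent choice, what remains is one of two things:
\begin{enumerate}[(a)]
\item the same claim for $[E_{b,0},[E_{a,0},X']]$, which has the same length with the first two letters swapped, so the argument is circular; or
\item shift-independence for the non-right-normed bracket $[[E_{a,0},E_{b,0}],X']$, which is exactly where the cocycle enters.
\end{enumerate}

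The missing idea is this. The discrepancies between shifts live in the central classes $\kappa_{k,1}$ of $\Omega^1_{A_+/\mathbb C}/dA_+$, which exist only because of the loop variable $z^{\pm1}$. These classes have zero $\mathfrak h$-weight, so they cannot survive in weight $\beta$ when $\beta$ has nonzero finite part. However, inside $\mathfrak s$ you do not yet know that such discrepancies are central; that is essentially what this lemma feeds into, namely injectivity of $\rho$.

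The paper avoids the issue by importing an external computation:
\begin{enumerate}[(1)]
\item It specializes Ueda's Proposition~4.26 to $\varepsilon_1=\varepsilon_2=0$, where $Y_{0,0}=U(\mathfrak s)$ and $J(h_i)=H_{i,1}$.
\item It transfers the resulting identity from the degreewise completion to $U(\mathfrak s)$ using the separatedness Lemma~\ref{lem:degreewise-completion-separatedness}.
\item It kills the scalar term $c^\alpha_{ij}x_{\pm\alpha}$ by comparing current degrees.
\end{enumerate}
A self-contained argument along your lines would need an input that genuinely distinguishes real from imaginary roots.
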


\begin{proof}
Set $\varepsilon_1=\varepsilon_2=0$ directly in the defining presentation
of \cite[Definition~3.1]{UedaConstruction}. The resulting relations are homogeneous in current
degree and are precisely the enveloping relations of \eqref{eq:classical-current-relations}--\eqref{eq:classical-quartic-odd-relations}. Hence
\begin{equation}
 Y_{0,0}\bigl(\widehat{\mathfrak{sl}}(m|n)\bigr)=U(\mathfrak s)
 \qquad\text{as current-graded algebras}.
\label{eq:zero-parameter-classical-specialization}
\end{equation}
In the notation of \cite[Definition~4.5]{UedaConstruction}, the same substitution gives
\begin{equation}
 J(h_i)=H_{i,1}.
\label{eq:j-presentation-classical-cartan-generator}
\end{equation}

Apply \cite[Proposition~4.26 and its proof]{UedaConstruction} at these parameter values.
For each positive real affine root $\alpha$, the calculation gives the
real-root identity for $x_\alpha$ and its negative-root companion for
$x_{-\alpha}$, with coefficients paired with $\alpha$ and opposite scalar
signs. The loop grading used there is a grading of $\mathfrak s$.
Lemma~\ref{lem:degreewise-completion-separatedness} therefore makes the canonical map from $U(\mathfrak s)$ to that
completion injective. After \eqref{eq:j-presentation-classical-cartan-generator}, the difference between the two sides
of each identity is an ordinary element of $U(\mathfrak s)$; consequently
the identities hold in the uncompleted algebra:
\begin{equation}
 (\alpha_j,\alpha)[H_{i,1},x_{\pm\alpha}]
  -(\alpha_i,\alpha)[H_{j,1},x_{\pm\alpha}]
   =\pm c_{ij}^{\alpha}x_{\pm\alpha}
 \qquad\text{in }U(\mathfrak s).
\label{eq:specialized-real-root-identity}
\end{equation}
Here $c_{ij}^{\alpha}$ is the scalar supplied by the cited calculation at
$(\varepsilon_1,\varepsilon_2)=(0,0)$. The left-hand side has current
degree one, while the right-hand side has degree zero. Both homogeneous
components must vanish. Moreover, $x_{\pm\alpha}\ne0$ by \eqref{eq:enveloping-retraction}, so
$c_{ij}^{\alpha}=0$. This proves \eqref{eq:real-root-cartan-commutator-identity} for $\beta=\alpha$. For
$\beta=-\alpha$, multiply the negative-root identity by $-1$ and use
$(\alpha_k,-\alpha)=-(\alpha_k,\alpha)$.
\end{proof}

Choose, for every real affine root $\beta$, a nonzero root vector
$x_\beta$ in current degree zero. Its finite part is nonzero. The finite
Cartan form is nondegenerate, so choose a finite Cartan element $t_0$ with
$\beta(t_0)\ne0$. For $t=\sum_{i=1}^{N-1}t_iD_i\in\mathfrak h$, write
$H(t)=\sum_{i=1}^{N-1}t_iH_{i,1}$ and
$\beta(t)=\sum_{i=1}^{N-1}t_i(\alpha_i,\beta)$, and define recursively
\begin{equation}
 x_\beta[0]=x_\beta,
 \qquad
 x_\beta[r+1]=\frac{1}{\beta(t_0)}[H(t_0),x_\beta[r]].
\label{eq:higher-real-root-vector-recurrence}
\end{equation}

The recursion is homogeneous, so $x_\beta[r]$ has current degree $r$; the affine root $\beta$, including its loop component, remains fixed.

\begin{lemma}\label{lem:current-root-vector-recursion}
The element \eqref{eq:higher-real-root-vector-recurrence} is independent of $t_0$ and satisfies
\begin{equation}
[H(t),x_\beta[r]]=\beta(t)x_\beta[r+1]
\label{eq:higher-root-cartan-action}
\end{equation}
for every finite Cartan element $t$. If $\beta,\gamma$ have linearly independent finite parts, then
\begin{equation}
[x_\beta[r],x_\gamma[s]]=
\begin{cases}
c_{\beta,\gamma}x_{\beta+\gamma}[r+s],&[x_\beta,x_\gamma]=c_{\beta,\gamma}x_{\beta+\gamma}\ne0,\\
0,&[x_\beta,x_\gamma]=0.
\end{cases}
\label{eq:higher-real-root-brackets}
\end{equation}
\end{lemma}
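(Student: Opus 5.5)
Independence of $t_0$ and the Cartan-action formula \eqref{eq:higher-root-cartan-action} will be proved together by induction on $r$, working inside $\mathfrak s$ (Lie brackets in $U(\mathfrak s)$). The key input is Lemma~\ref{lem:real-root-cartan-identity}, which by linearity in $i,j$ gives, for all finite Cartan elements $t,t'$ and every nonzero current-degree-zero root vector $y$ of root $\beta$,
\[
\beta(t')\,[H(t),y]=\beta(t)\,[H(t'),y].
\]
For $r=0$, take $t'=t_0$ and $y=x_\beta$. Dividing by $\beta(t_0)$ gives $[H(t),x_\beta]=\beta(t)x_\beta[1]$, and comparing two admissible choices $t_0,t_0'$ shows that $x_\beta[1]$ does not depend on the choice.

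For the inductive step we need the identity at higher current degree. The plan is to transport it using the relation $[H_{i,r},H_{j,s}]=0$, which gives $[H(t),H(t')]=0$. Suppose \eqref{eq:higher-root-cartan-action} holds at level $r$ for all $t$. Then
\[
[H(t),x_\beta[r+1]]=\tfrac1{\beta(t_0)}[H(t),[H(t_0),x_\beta[r]]]=\tfrac1{\beta(t_0)}[H(t_0),[H(t),x_\beta[r]]]
=\tfrac{\beta(t)}{\beta(t_0)}[H(t_0),x_\beta[r+1]],
\]
and the last expression equals $\beta(t)x_\beta[r+2]$ by definition. Here the middle step uses the super Jacobi identity with even $H$'s, and the final step applies the induction hypothesis at level $r$. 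Independence of $t_0$ at level $r+2$ then follows from the same symmetric identity, obtained by exchanging the roles of $t$ and $t_0$. Thus the only genuinely nontrivial input is level zero, which is supplied by Lemma~\ref{lem:real-root-cartan-identity}.

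For \eqref{eq:higher-real-root-brackets}, use induction on $r+s$; the base case $r=s=0$ is the definition of $c_{\beta,\gamma}$. Since the finite parts of $\beta$ and $\gamma$ are linearly independent, the finite Cartan form allows us to choose $t$ with $\beta(t)\ne0$ and $\gamma(t)=0$, and similarly $t'$ with $\gamma(t')\ne0$ and $\beta(t')=0$. To raise $r$, apply $\operatorname{ad}H(t)$ to $[x_\beta[r],x_\gamma[s]]$. By the Leibniz rule and \eqref{eq:higher-root-cartan-action}, this gives
\[
\beta(t)[x_\beta[r+1],x_\gamma[s]]+\gamma(t)[x_\beta[r],x_\gamma[s+1]]=\beta(t)[x_\beta[r+1],x_\gamma[s]].
\]
On the other side, the induction hypothesis gives $c_{\beta,\gamma}[H(t),x_{\beta+\gamma}[r+s]]=c_{\beta,\gamma}(\beta+\gamma)(t)\,x_{\beta+\gamma}[r+s+1]$ when $\beta+\gamma$ is a root with nonzero bracket, and zero otherwise. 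Dividing by $\beta(t)$ yields the claim for $(r+1,s)$, and the choice $t'$ raises $s$ in the same way.

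The main obstacle is the case $[x_\beta,x_\gamma]\ne0$, where $\beta+\gamma$ must be a real root so that $x_{\beta+\gamma}[\cdot]$ is defined. Its finite part is nonzero because the finite parts of $\beta$ and $\gamma$ are independent, so $\beta+\gamma$ cannot be imaginary. The step also requires that $x_{\beta+\gamma}$ be the chosen root vector up to the scalar $c_{\beta,\gamma}$, which holds because real root spaces are one-dimensional. I expect the delicate point to be confirming that \eqref{eq:higher-root-cartan-action} applies to the root $\beta+\gamma$ with its own recursion; this does hold, since that formula was proved for every real root and is independent of the auxiliary element $t_0$.
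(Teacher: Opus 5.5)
Your proposal is correct and follows the paper's proof. The Cartan-action formula and independence of $t_0$ come from Lemma~\ref{lem:real-root-cartan-identity} at level zero together with commutativity of the $H_{i,1}$, and the bracket formula comes from choosing $t$ with $\gamma(t)=0\ne\beta(t)$ and $t'$ with $\beta(t')=0\ne\gamma(t')$. Your step-by-step induction on $r+s$ is the unrolled form of the paper's closed expression $[x_\beta[r],x_\gamma[s]]=\beta(t)^{-r}\gamma(t')^{-s}(\operatorname{ad}H(t))^r(\operatorname{ad}H(t'))^s[x_\beta,x_\gamma]$.
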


\begin{proof}
Equation \eqref{eq:real-root-cartan-commutator-identity} gives \eqref{eq:higher-root-cartan-action} for $r=0$. All $H_{i,1}$ commute. Assuming \eqref{eq:higher-root-cartan-action} in degree $r$, we obtain
\begin{align*}
[H(t),x_\beta[r+1]]
&=\frac1{\beta(t_0)}[H(t_0),[H(t),x_\beta[r]]]\\
&=\frac{\beta(t)}{\beta(t_0)}[H(t_0),x_\beta[r+1]]
=\beta(t)x_\beta[r+2].
\end{align*}
This also proves independence of $t_0$.

For \eqref{eq:higher-real-root-brackets}, choose finite Cartan elements $t,q$ such that
\[
\gamma(t)=0,\quad\beta(t)\ne0,\quad\beta(q)=0,\quad\gamma(q)\ne0.
\]
Then
\[
x_\beta[r]=\beta(t)^{-r}(\operatorname{ad}H(t))^rx_\beta,
\qquad x_\gamma[s]=\gamma(q)^{-s}(\operatorname{ad}H(q))^sx_\gamma.
\]
The two adjoint operators commute; the first kills all $x_\gamma[s]$ and the second kills all $x_\beta[r]$. Applying the super-Jacobi identity repeatedly gives
\[
[x_\beta[r],x_\gamma[s]]
=\beta(t)^{-r}\gamma(q)^{-s}(\operatorname{ad}H(t))^r(\operatorname{ad}H(q))^s[x_\beta,x_\gamma],
\]
which is exactly \eqref{eq:higher-real-root-brackets}.
\end{proof}

For $a\ne b$ and $k\in\mathbb Z$, normalize the degree-zero root vector $x_{ab}(k)$ by
\begin{equation}
\rho(x_{ab}(k))=E_{ab}\otimes z^k.
\label{eq:normalized-real-root-images}
\end{equation}
This is unambiguous because the restriction of $\rho$ in current degree zero is the standard loop realization of $\gU$.

\begin{proposition}\label{prop:classical-presentation-isomorphism}
The map $\rho$ is an isomorphism
\begin{equation}
\sfr\xrightarrow{\sim}\Tplus.
\label{eq:rho-classical-isomorphism}
\end{equation}
\end{proposition}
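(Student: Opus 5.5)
Since $\rho$ is already surjective by Lemma~\ref{lem:classical-map-extension}, the plan is to construct a Lie superalgebra homomorphism $\sigma:\Tplus\to\sfr$ with $\sigma\circ\rho=\Id_{\sfr}$. I will build $\sigma$ through the Steinberg model: by \eqref{eq:steinberg-differential-isomorphism} it suffices to construct $\theta:\mathfrak{st}_{m|n}(A_+)\to\sfr$ and set $\sigma=\theta\circ\psi^{-1}$. On Steinberg generators I define
\[
\theta\bigl(F_{ab}(z^ku^r)\bigr)=x_{ab}(k)[r],
\]
using the higher real root vectors of Lemma~\ref{lem:current-root-vector-recursion} with the normalization \eqref{eq:normalized-real-root-images}. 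Extending linearly in $f\in A_+$ gives a well-defined map on generators. It remains to check the two Steinberg relations.

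For the chain relation with $a,b,c$ distinct, the roots $\varepsilon_a-\varepsilon_b+k\delta$ and $\varepsilon_b-\varepsilon_c+\ell\delta$ have linearly independent finite parts. In current degree zero, $[x_{ab}(k),x_{bc}(\ell)]=x_{ac}(k+\ell)$, and this normalization is inherited from the loop realization via $\rho$ being an isomorphism in degree zero. Formula \eqref{eq:higher-real-root-brackets} then gives $[x_{ab}(k)[r],x_{bc}(\ell)[s]]=x_{ac}(k+\ell)[r+s]$, which is the relation \eqref{eq:steinberg-chain-relation} for $f=z^ku^r$ and $g=z^\ell u^s$. For the disjoint relation with $b\ne c$ and $a\ne d$, there are two cases. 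If $\{a,b\}\cap\{c,d\}=\emptyset$, or if $a=c$ or $b=d$ (the latter two with $(a,b)\ne(c,d)$), the finite parts are independent and the degree-zero bracket vanishes in $\widehat{\g}_{\mathrm U}$, so \eqref{eq:higher-real-root-brackets} gives zero. The remaining case is $(a,b)=(c,d)$ with $E_{ab}$ odd, which is not covered by Lemma~\ref{lem:current-root-vector-recursion}. There I must show $[x_{ab}(k)[r],x_{ab}(\ell)[s]]=0$. I would do this by writing $E_{ab}=[E_{ae},E_{eb}]$ for a third index $e$ (available since $N\ge5$) and expanding with the already established chain relations and super-Jacobi. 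This gives a sum of brackets of root vectors with independent finite parts, all of which vanish.

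By universality in Theorem~\ref{thm:steinberg-universal-central-extension}, or simply by the presentation, $\theta$ is then a homomorphism, and $\sigma=\theta\psi^{-1}$ is defined. Next I check $\sigma\rho=\Id_{\sfr}$ on generators. For $E^+_{i,r}$ with $1\le i<N$, the value $\rho(E^+_{i,r})=E_{i,i+1}u^r$ is sent to $x_{i,i+1}(0)[r]$. I must show this equals $E^+_{i,r}$. In degree zero this holds by the normalization, and the relations $[H_{j,1},E^+_{i,r}]=a_{ji}E^+_{i,r+1}$ coming from \eqref{eq:classical-current-relations} show that $E^+_{i,r}$ obeys the same recursion \eqref{eq:higher-real-root-vector-recurrence}. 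The cases $E^-$ and the affine node are analogous, with $x_{N1}(1)[r]$. For $H_{i,r}$, write $\rho(H_{i,r})=[\rho(E^+_{i,r}),\rho(E^-_{i,0})]$; since $\sigma$ is a homomorphism, $\sigma\rho(H_{i,r})=[E^+_{i,r},E^-_{i,0}]=H_{i,r}$. Because the generators of $\sfr$ are fixed, $\sigma\rho=\Id$, so $\rho$ is injective. Combined with Lemma~\ref{lem:classical-map-extension}, $\rho$ is an isomorphism.

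The main obstacle is the odd self-bracket case together with the sign bookkeeping for $E^-$ generators. The factors $(-1)^{p(i)}$ and the minus sign at the affine node must match the normalization \eqref{eq:normalized-real-root-images}. I would handle these signs by always defining $\sigma$ through $\rho$-normalized vectors, so that each sign is forced by the degree-zero identity.
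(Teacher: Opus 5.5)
Your proposal follows the paper's proof step for step: the same map $F_{ab}(z^ku^r)\mapsto x_{ab}(k)[r]$ out of the Steinberg algebra, Lemma~\ref{lem:current-root-vector-recursion} for pairs with independent finite parts, the same $x_{ab}(\ell)[s]=[x_{ae}(\ell)[s],x_{eb}(0)[0]]$ super-Jacobi argument for the equal-root case, and the same generator check via the recursion (you prove only the left inverse, which suffices given the surjectivity from Lemma~\ref{lem:classical-map-extension}). One correction: the case $(a,b)=(c,d)$ is not confined to odd $E_{ab}$, because for even $E_{ab}$ the bracket $[x_{ab}(k)[r],x_{ab}(\ell)[s]]$ with $(k,r)\ne(\ell,s)$ is also outside that lemma and not automatically zero, but your Jacobi argument disposes of it unchanged, exactly as in the paper.
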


\begin{proof}
Let $\rho_{\mathrm{st}}=\psi^{-1}\rho$. Define on Steinberg generators
\begin{equation}
\chi\bigl(F_{ab}(z^ku^r)\bigr)=x_{ab}(k)[r],
\qquad a\ne b,\ k\in\mathbb Z,\ r\ge0,
\label{eq:steinberg-to-classical-map}
\end{equation}
and extend linearly in the coefficient from $A_+$.

If $a,b,c$ are distinct, the finite parts of the roots $\varepsilon_a-\varepsilon_b$ and $\varepsilon_b-\varepsilon_c$ are linearly independent. Lemma~\ref{lem:current-root-vector-recursion}, with the normalization \eqref{eq:normalized-real-root-images}, gives
\begin{equation}
[x_{ab}(k)[r],x_{bc}(\ell)[s]]=x_{ac}(k+\ell)[r+s].
\label{eq:higher-root-steinberg-chain}
\end{equation}
If $b\ne c$ and $a\ne d$, the same lemma gives
\begin{equation}
[x_{ab}(k)[r],x_{cd}(\ell)[s]]=0
\label{eq:higher-root-steinberg-disjoint}
\end{equation}
whenever the finite root parts are linearly independent. We now classify the potentially dependent cases explicitly. Let $e_1,\ldots,e_N$ be the coordinate basis of $\mathbb C^N$. If the restrictions of $\varepsilon_a-\varepsilon_b$ and $\varepsilon_c-\varepsilon_d$ to $\mathfrak h$ are proportional by a nonzero scalar $\tau$, then
\begin{equation}
e_a-e_b-\tau(e_c-e_d)
=\mu\bigl((-1)^{p(1)},\ldots,(-1)^{p(N)}\bigr)
\label{eq:root-weight-proportionality}
\end{equation}
for some $\mu\in\mathbb C$. Indeed, the annihilator of the hyperplane $\sum_{\nu=1}^N(-1)^{p(\nu)}d_\nu=0$ is precisely the line spanned by $\bigl((-1)^{p(1)},\ldots,(-1)^{p(N)}\bigr)$. The union $\{a,b,c,d\}$ has at most four elements, whereas $N\ge5$; choose $e\notin\{a,b,c,d\}$. The $e$-th coordinate in \eqref{eq:root-weight-proportionality} is $0=\mu(-1)^{p(e)}$, and $(-1)^{p(e)}\in\{1,-1\}$, so $\mu=0$. Comparing the two nonzero coordinates now shows that either $\tau=1$ and $(c,d)=(a,b)$, or $\tau=-1$ and $(c,d)=(b,a)$. The second alternative is excluded by $b\ne c$ and $a\ne d$. Hence the only dependent case still to treat is $(c,d)=(a,b)$. Choose $e\notin\{a,b\}$ and write, using \eqref{eq:higher-root-steinberg-chain},
\[
x_{ab}(\ell)[s]=[x_{ae}(\ell)[s],x_{eb}(0)[0]].
\]
Both brackets of $x_{ab}(k)[r]$ with the two factors vanish by the already proved case. The super-Jacobi identity therefore proves \eqref{eq:higher-root-steinberg-disjoint} also in this last case.

Equations \eqref{eq:higher-root-steinberg-chain}--\eqref{eq:higher-root-steinberg-disjoint} are precisely the Steinberg relations
\[
[F_{ab}(f),F_{bc}(g)]=F_{ac}(fg),\qquad
[F_{ab}(f),F_{cd}(g)]=0\quad(b\ne c,\ a\ne d).
\]
Thus \eqref{eq:steinberg-to-classical-map} defines a homomorphism $\chi:\mathfrak{st}_{m|n}(A_+)\to\sfr$.

On every simple affine generator the recursion \eqref{eq:higher-root-cartan-action} is exactly the leading relation \eqref{eq:classical-current-relations}. Hence
\[
\chi\rho_{\mathrm{st}}(E^\pm_{i,r})=E^\pm_{i,r},\qquad
\chi\rho_{\mathrm{st}}(H_{i,r})=H_{i,r}.
\]
The Cartan equality follows from $H_{i,r}=[E^+_{i,r},E^-_{i,0}]$. Conversely, in $\Tplus$ the same Cartan recursion gives
\[
[\rho(H(t)),E_{ab}\otimes z^ku^r]
=(\varepsilon_a-\varepsilon_b)(t)E_{ab}\otimes z^ku^{r+1};
\]
the cocycle vanishes because one matrix is diagonal and the other is off diagonal. Therefore $\rho_{\mathrm{st}}\chi$ is the identity on all Steinberg generators. The two maps are inverse, so $\rho_{\mathrm{st}}$ and hence $\rho=\psi\rho_{\mathrm{st}}$ are injective. Together with the surjectivity of Lemma~\ref{lem:classical-map-extension}, this proves \eqref{eq:rho-classical-isomorphism}.
\end{proof}

From now on we identify $\sfr$ and $\Tplus$ by \eqref{eq:rho-classical-isomorphism}. Under this identification, \eqref{eq:canonical-classical-surjection} becomes the canonical epimorphism
\begin{equation}
\Phi:U(\Tplus)\twoheadrightarrow\operatorname{gr}_F\Y.
\label{eq:canonical-pbw-surjection}
\end{equation}

\section{Evaluation map}\label{sec:evaluation-map}

Evaluation homomorphisms are standard constructions in Yangian theory, going back to the finite-dimensional setting of Drinfeld \cite{Drinfeld85,Drinfeld88}. In ordinary affine type~$A$, an evaluation homomorphism into a completed affine enveloping algebra was constructed by Guay \cite[Section~7]{Guay2007}. Here we use the affine super Yangian evaluation homomorphism constructed by Ueda \cite[Theorem~5.1]{UedaConstruction}.

Put
\begin{equation}
\eta=m-n,\qquad c_0=\frac{n-m}{2}=-\frac\eta2,
\qquad\lambda=\frac1\eta.
\label{eq:evaluation-constants}
\end{equation}
All three numbers are nonzero. These are the parameters of the specialization used below.

Let $X(a)=X\otimes t^a$. To define the evaluation target $B$ explicitly, start with the affine Lie superalgebra used in \cite[Section~5, immediately before Theorem~5.1]{UedaConstruction}:
\begin{equation}
\widehat{\mathfrak{gl}}(m|n)
=\mathfrak{gl}(m|n)\otimes\mathbb C[t^{\pm1}]\oplus\mathbb Cc\oplus\mathbb Cz,
\label{eq:affine-general-linear-algebra}
\end{equation}
where $c$ and $z$ are central. Here $z$ denotes the central generator, while $t$ is the loop variable. The bracket is
\begin{equation}
[X(a),Y(b)]=[X,Y](a+b)
+a\delta_{a+b,0}\bigl(\str(XY)c+\str(X)\str(Y)z\bigr).
\label{eq:affine-general-linear-bracket}
\end{equation}
We define $B$ to be the standard degreewise completion of the enveloping algebra obtained after imposing $c=c_0$ and $z=1$. In this central quotient the bracket is
\begin{equation}
\begin{aligned}
[X(a),Y(b)]={}&[X,Y](a+b)\\
&+a\delta_{a+b,0}\bigl(c_0\str(XY)+\str(X)\str(Y)\bigr).
\end{aligned}
\label{eq:evaluation-central-quotient-bracket}
\end{equation}
The formula with the Kronecker factor in both central summands is printed in \cite[Section~2, p.~484]{UedaSurjectivity}; this factor is required by the loop grading. It is missing from the second central summand in the displayed formula preceding Theorem~5.1 of \cite{UedaConstruction}. For matrix units, \eqref{eq:evaluation-central-quotient-bracket} reads
\begin{equation}
\begin{aligned}
[E_{ab}(r),E_{cd}(s)]={}&\delta_{bc}E_{ad}(r+s)
-(-1)^{(p(a)+p(b))(p(c)+p(d))}\delta_{ad}E_{cb}(r+s)\\
&+r\delta_{r+s,0}\bigl(c_0\delta_{bc}\delta_{ad}(-1)^{p(a)}
+\delta_{ab}\delta_{cd}(-1)^{p(a)+p(c)}\bigr).
\end{aligned}
\label{eq:evaluation-matrix-unit-bracket}
\end{equation}
Indeed, $\str(E_{ab}E_{cd})=\delta_{bc}\delta_{ad}(-1)^{p(a)}$ and $\str(E_{ab})\str(E_{cd})=\delta_{ab}\delta_{cd}(-1)^{p(a)+p(c)}$. The matrix-unit bracket is given in Ueda \cite[Section~2, p.~484]{UedaSurjectivity}.

Explicitly, put
\[
B_0:=U\bigl(\widehat{\mathfrak{gl}}(m|n)\bigr)/(c-c_0,z-1)
\]
where $(c-c_0,z-1)$ denotes the two-sided ideal generated by the two displayed central elements. Give $B_0$ the loop grading induced by $\deg_\ell X(a)=a$ and $\deg_\ell c=\deg_\ell z=0$, and write $B_0(q)$ for its degree-$q$ subspace. For $q\in\mathbb Z$ and $M\in\mathbb Z_{\ge0}$, set
\begin{equation}
\begin{gathered}
T_MB_0(q)=\sum_{r>M}B_0(q-r)B_0(r),\qquad
\widehat B_0(q)=\varprojlim_M B_0(q)/T_MB_0(q),\\
B=\bigoplus_{q\in\mathbb Z}\widehat B_0(q).
\end{gathered}
\label{eq:evaluation-target-completion}
\end{equation}
The quotients in \eqref{eq:evaluation-target-completion} are vector-space quotients, and the inverse limits carry their natural degreewise topology. With the continuously extended product from $B_0$, this defines the completed associative superalgebra $B$. It is the degreewise completion used for the evaluation target in \cite[Definition~4.1 and Section~5]{UedaConstruction}.

Let $\Id=\sum_{\nu=1}^NE_{\nu\nu}$ be the identity matrix. Recall the convention $X(a)=X\otimes t^a$ introduced above. Accordingly, for $a\in\mathbb Z$, define
\begin{equation}
\Id(a):=\Id\otimes t^a
\label{eq:identity-loop-modes}
\end{equation}
$\Id(a)$ is the identity-matrix loop element of loop degree $a$, viewed first as an element of the affine Lie superalgebra and then as an element of its completed enveloping algebra $B$. In particular, $\Id(0)$ is the constant scalar-matrix Lie element $\Id\otimes1$, not the unit of the enveloping algebra. Since $m\ne n$,
\begin{equation}
\mathfrak{gl}(m|n)=\g\oplus\mathbb C\Id,\qquad
E_{aa}=E^0_{aa}+\frac{(-1)^{p(a)}}\eta\Id,\quad E^0_{aa}\in\g.
\label{eq:special-linear-scalar-splitting}
\end{equation}
Although $\Id$ commutes with every finite matrix, the central cocycle in the affine bracket gives a nontrivial bracket between identity-matrix loop elements of opposite loop degrees. Indeed, $\str(\Id)=\str(\Id^2)=\eta$ and $\str(\Id X)=\str(X)=0$ for $X\in\g$. Using $c_0=-\eta/2$ from \eqref{eq:evaluation-constants} in \eqref{eq:evaluation-central-quotient-bracket} therefore gives
\begin{equation}
[\Id(a),X(b)]=0\quad(X\in\g),\qquad
[\Id(a),\Id(b)]=a\delta_{a+b,0}\frac{\eta^2}{2}.
\label{eq:identity-mode-heisenberg-relations}
\end{equation}
Filter $B$ by the number of identity-matrix loop factors in a PBW monomial:
\begin{equation}
\deg_G\Id(a)=1,\qquad\deg_GX(a)=0\quad(X\in\g).
\label{eq:evaluation-target-filtration}
\end{equation}
Let $G_dB$ be the closed span of normal PBW monomials containing at most $d$ factors of the form $\Id(a)$, set $G_dB=0$ for $d<0$, put $\Bbd=\bigcup_{d\ge0}G_dB$, and write
\[
\sigma_d:G_dB\longrightarrow G_dB/G_{d-1}B,\qquad
j_a=\sigma_1(\Id(a))\in\operatorname{gr}_G\Bbd.
\]
The $j_a$ commute with one another and with the affine $\g$-part.

We record a cutoff lemma which will also justify the later use of completed tensor products.

\begin{lemma}\label{lem:continuous-pbw-coordinates}
Choose the normal PBW order in which negative-loop-degree basis elements
precede loop-degree-zero basis elements, which in turn precede
positive-loop-degree basis elements. For a normal monomial $M$, let
$\deg_{>0}(M)$ be the sum of its positive loop indices, as defined in \eqref{eq:positive-degree-of-pbw-monomial}.
Then
\begin{equation}
T_LB_0(q)=\operatorname{span}
\{M:\deg_\ell M=q,\ \deg_{>0}(M)>L\}.
\label{eq:normal-order-right-tail-description}
\end{equation}
Consequently every fixed PBW coefficient extends continuously to $B$,
and these coefficients separate $B$. The same assertions hold for finite
tensor powers with total loop grading, for $\operatorname{gr}_G\Bbd$,
and for its algebraic finite tensor powers. After fixing the number of
identity-matrix loop factors, passage to the $G$-symbol is therefore valid
coefficient by coefficient.
\end{lemma}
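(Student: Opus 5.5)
The plan is to reduce everything to Lemma~\ref{lem:degreewise-completion-separatedness}, applied to the graded Lie superalgebra
\[
\mathfrak a=\widehat{\mathfrak{gl}}(m|n)\big/(c-c_0,\,z-1),
\]
with its loop grading. Because $c$ and $z$ are central, one does not actually quotient a Lie algebra by a central element set to a nonzero value. Instead, $B_0$ is the enveloping algebra of the Lie superalgebra $\mathfrak{gl}(m|n)\otimes\mathbb C[t^{\pm1}]\oplus\mathbb C\mathbf 1$, where $\mathbf 1$ is a central element of degree zero identified with the unit. Concretely, I will check that PBW holds for $B_0$ with basis the normally ordered monomials in a homogeneous basis of $\mathfrak{gl}(m|n)\otimes\mathbb C[t^{\pm1}]$. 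This is the standard PBW theorem for a central extension specialized at a nonzero central character: $U(\widehat{\mathfrak{gl}})$ is free over $\mathbb C[c,z]$ on those monomials, so the quotient by $(c-c_0,z-1)$ keeps them as a basis.

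With this basis, the argument of Lemma~\ref{lem:degreewise-completion-separatedness} goes through verbatim and gives \eqref{eq:normal-order-right-tail-description}. The inclusion ``$\supseteq$'' comes from splitting off the positive block of a normal monomial. For ``$\subseteq$'', reordering within the positive block replaces a product by a bracket of the same total positive degree. The central cocycle only appears for $a+b=0$, so it never arises between two positive factors, and $\deg_{>0}$ cannot drop.

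Continuity of coefficients is then immediate. Fix a normal monomial $M_0$ with $\deg_{>0}(M_0)=L_0$. Its coefficient functional kills $T_{L}B_0(q)$ for every $L\ge L_0$, so it factors through $B_0(q)/T_{L_0}B_0(q)$ and hence through $\widehat B_0(q)$. Separation is proved the same way. Each $B_0(q)/T_LB_0(q)$ has as a basis the monomials with $\deg_{>0}\le L$, so an element of $\widehat B_0(q)$ all of whose coefficients vanish has zero image at every level $L$, and is therefore zero in the inverse limit.

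Next come the tensor powers. The $k$-fold tensor power $B_0^{\otimes k}$ is the enveloping algebra of $\mathfrak a^{\oplus k}$ (with super signs), graded by total loop degree. Its normal monomials are tensor products of normal monomials, with $\deg_{>0}$ adding up. Take the completed tensor power defined by right tails $\sum_{r>L}B_0^{\otimes k}(q-r)B_0^{\otimes k}(r)$. Then the same lemma applies to $\mathfrak a^{\oplus k}$, after ordering its basis by loop degree first and tensor slot second. I expect the main obstacle to be matching this completion with the completed tensor product used later for coproducts. If that is defined by tails in each factor separately, I will show the two tail families are cofinal: a monomial with total $\deg_{>0}>kL$ has some factor with $\deg_{>0}>L$, and conversely.

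For $\operatorname{gr}_G\Bbd$, the first step is to check that the $G$-filtration is compatible with the normal basis. Reordering two factors creates either a bracket, which can only lower the number of $\Id$ factors, or a central scalar from \eqref{eq:identity-mode-heisenberg-relations}, which removes two of them. Hence $G_dB$ is the closed span of the basis monomials with at most $d$ identity factors, and $G_dB/G_{d-1}B$ has the monomials with exactly $d$ identity factors as a topological basis. So $\operatorname{gr}_G\Bbd$ is the degreewise completion of $U(\widehat{\mathfrak g}\oplus\bigoplus_a\mathbb C j_a)$ with the $j_a$ central. Applying the same lemma to this graded Lie superalgebra gives the corresponding statements for $\operatorname{gr}_G\Bbd$, and by the previous paragraph for its finite algebraic tensor powers.

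The final assertion follows by comparing coefficients. For $x\in G_dB$, each coefficient of $\sigma_d(x)$ on a monomial with exactly $d$ identity factors equals the corresponding coefficient of $x$. Both are continuous functionals, so symbols can be computed coefficient by coefficient and are separated by those coefficients.
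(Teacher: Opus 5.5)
Your proposal is correct and follows essentially the same route as the paper: a PBW basis of $B_0$ from the freeness of $U(\widehat{\mathfrak{gl}}(m|n))$ over $\mathbb C[c,z]$, the right-tail estimate obtained by splitting off and reordering the positive block, continuity and separation of fixed coefficients at each cutoff, additivity of $\deg_{>0}$ over tensor factors, and the relation \eqref{eq:identity-mode-heisenberg-relations} to see that reordering never raises the number of $\Id(a)$ factors. Two points of phrasing are harmless: $\operatorname{gr}_G\Bbd$ is only the bounded-$G$-degree part of the completion you describe, which is enough for the coefficient arguments, and the cofinality digression is unnecessary because $B_d$ is already defined by total-loop-degree tails.
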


\begin{proof}
Choose a homogeneous basis of $\widehat{\mathfrak{gl}}(m|n)$ containing
the two central basis elements. Order the central elements first and
the remaining basis elements by the signs of their loop degrees. The
ordinary PBW theorem makes the enveloping algebra free over the
polynomial algebra in the two central elements. After imposing $c=c_0$
and $z=1$, the normal monomials in the remaining basis elements therefore
form a basis of $B_0$.

Take $y\in B_0(q-r)$ and $z\in B_0(r)$ with $r>L$. By linearity it is
enough to take $y$ and $z$ to be normal PBW monomials. Factor the second
monomial according to loop degree:
\begin{equation}
z=z_{<0}z_0z_{>0}.
\label{eq:normal-order-loop-degree-blocks}
\end{equation}
Since
\begin{equation}
r=\deg_\ell z=\deg_\ell z_{<0}+\deg_\ell z_{>0}
\qquad\text{and}\qquad \deg_\ell z_{<0}\le0,
\label{eq:normal-order-right-factor-degree}
\end{equation}
we have $\deg_\ell z_{>0}\ge r>L$. Regroup the product as
\begin{equation}
yz=(yz_{<0}z_0)z_{>0}.
\label{eq:normal-order-product-factorization}
\end{equation}
Expand the parenthesized factor in the normal PBW basis. The appended
factor $z_{>0}$ is already to the right of every factor of nonpositive
loop degree. Restoring the chosen order can therefore require only
interchanges among factors of positive loop degree. The bracket of
elements of positive loop degrees $a$ and $b$ has loop degree $a+b$;
replacing their product by this bracket preserves the sum of positive
loop degrees. Hence every normal monomial occurring in $yz$ has
$\deg_{>0}$ at least $\deg_\ell z_{>0}>L$. This proves the inclusion
``$\subseteq$'' in \eqref{eq:normal-order-right-tail-description}.

Conversely, if a normal monomial has $\deg_{>0}(M)>L$, split off the
product of all its positive-loop-degree factors as the right factor.
That factor has loop degree $\deg_{>0}(M)>L$, so $M\in T_LB_0(q)$.
This proves \eqref{eq:normal-order-right-tail-description}. The coefficient of a fixed $M$ therefore factors
through every cutoff $L\ge\deg_{>0}(M)$, while \eqref{eq:normal-order-right-tail-description} also shows that
all such coefficients together separate the inverse limit.

The same normal-ordering estimate applies to a finite tensor power,
where $\deg_{>0}$ is computed by summing the positive degrees over all
tensor factors. Finally, \eqref{eq:identity-mode-heisenberg-relations} shows that reordering two identity-matrix
loop factors can only lower their number by two and never raise it.
Hence extraction of a fixed $G$-homogeneous PBW coefficient is compatible
with each finite cutoff. This proves the last assertion.
\end{proof}

Introduce the formal current
\begin{equation}
\mathcal J(z)=\sum_{q\in\mathbb Z}j_qz^{-q}.
\label{eq:identity-symbol-generating-series}
\end{equation}
We use the homogeneous central basis \eqref{eq:central-differential-basis}, whose derivation was given there.

In \eqref{eq:theta-current-formula}--\eqref{eq:theta-central-basis-images}, choose the lifts
$:\mathrm{Id}(q_1)\cdots\mathrm{Id}(q_r)X(b):$ of
$j_{q_1}\cdots j_{q_r}X(b)$, omitting $X(b)$ for central terms.
Colons select the corresponding normally ordered PBW monomial of
Lemma~\ref{lem:continuous-pbw-coordinates}. Extend this convention linearly. Sum these normally ordered
lifts in $B$ before taking their $G$-symbols; \eqref{eq:normal-order-right-tail-description} ensures local
finiteness at every cutoff.

Define $\Theta:\Tplus\to\operatorname{gr}_G\Bbd$ by
\begin{equation}
\Theta(Xz^au^r)=\lambda^r\sum_{q_1,\ldots,q_r\in\mathbb Z}
j_{q_1}\cdots j_{q_r}X(a-q_1-\cdots-q_r),
\label{eq:theta-current-formula}
\end{equation}
\begin{equation}
\Theta\!\left(\overline{P\,dz+Q\,du}\right)
=c_0\operatorname{Res}_{z=0}
\left(P(z,\lambda\mathcal J(z))
+Q(z,\lambda\mathcal J(z))\frac{d}{dz}(\lambda\mathcal J(z))\right)dz.
\label{eq:theta-differential-formula}
\end{equation}
For $r=0$ the sum in \eqref{eq:theta-current-formula} is understood as the single empty tuple.
Formula \eqref{eq:theta-differential-formula} is the residue of the pullback of a differential form
under $u\mapsto\lambda\mathcal J(z)$; hence it kills exact forms.
In particular,
\begin{equation}
\begin{aligned}
\Theta(\kappa_{0,0})&=c_0,\\
\Theta(\kappa_{k,r})&=c_0\lambda^r[z^{-k}]\mathcal J(z)^r.
\end{aligned}
\label{eq:theta-central-basis-images}
\end{equation}

\begin{lemma}\label{lem:evaluation-symbol-embedding}
The normally ordered lifts of \eqref{eq:theta-current-formula}--\eqref{eq:theta-central-basis-images} converge in \eqref{eq:evaluation-target-completion}.
Their $G$-symbols define an injective Lie superalgebra homomorphism
$\Theta$, graded by the $u$-degree on $\Tplus$ and the $G$-degree in the target.
\end{lemma}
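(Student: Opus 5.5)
The plan is to reduce everything to a computation in the associated graded $\operatorname{gr}_G\Bbd$, where the identity-matrix modes become commuting variables. By \eqref{eq:identity-mode-heisenberg-relations}, reordering two factors $\Id(a),\Id(b)$ produces only a scalar, which drops the $G$-degree by two. Since $\Id(a)$ commutes with $\g(b)$, the symbols $j_q$ are central in $\operatorname{gr}_G\Bbd$. Moreover, by the PBW basis of Lemma~\ref{lem:continuous-pbw-coordinates}, monomials $j_{q_1}\cdots j_{q_r}\,(\text{normal }\g\text{-monomial})$ are linearly independent there. Thus $\operatorname{gr}_G\Bbd$ behaves like a completed tensor product of a polynomial algebra in the $j_q$ with the completed enveloping algebra of the level-$c_0$ affine $\g$ (the term $\str(X)\str(Y)$ vanishes on $\g$). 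Heuristically, $\Theta$ is then the current map induced by the commutative substitution $u\mapsto\lambda\mathcal J(z)$, and everything follows from that picture.

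\emph{Step 1 (convergence).} I will fix $a$, $r$, $X$ and consider the lifts $:\Id(q_1)\cdots\Id(q_r)X(a-\sum q_i):$. All of them have loop degree $a$. If the tuples have bounded $\deg_{>0}$, then all positive $q_i$ and the positive part of $a-\sum q_i$ are bounded. Since the sum is fixed, the negative entries are bounded as well, so only finitely many such tuples occur. By \eqref{eq:normal-order-right-tail-description}, all but finitely many lifts therefore lie in $T_LB_0(a)$, and the sum converges. The same argument applies to the central formulas \eqref{eq:theta-central-basis-images}. The $G$-symbol is taken coefficientwise, as permitted by the last assertion of Lemma~\ref{lem:continuous-pbw-coordinates}. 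Well-definedness on $\Omega^1/dA_+$ is already noted: \eqref{eq:theta-differential-formula} is the residue of a pullback, so it kills exact forms. The grading and parity statements are immediate, because a term with $u^r$ carries exactly $r$ factors $j$.

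\emph{Step 2 (homomorphism).} I will compute $[\Theta(Xz^au^r),\Theta(Yz^bu^s)]$ termwise, which is justified by continuity of the product in each fixed PBW coefficient. Since the $j$'s are central, each pair of terms contributes $j_{q}\cdots j_{q'}\,[X(\alpha),Y(\beta)]$. The Lie part resums to $\Theta([X,Y]z^{a+b}u^{r+s})$. The cocycle part equals $c_0\str(XY)\,\alpha\,\delta_{\alpha+\beta,0}$ times a product of $j$'s. Writing $f=z^a(\lambda\mathcal J)^r$ and $g=z^b(\lambda\mathcal J)^s$ as formal series, this sum equals $c_0\str(XY)\operatorname{Res}_z g\,\partial_z f\,dz$. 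Using the chain rule, this is exactly $\Theta(\str(XY)\overline{g\,df})$ computed via \eqref{eq:theta-differential-formula}, where $df$ includes both the $dz$ and the $du$ components. The central images are scalars or polynomials in the $j$'s, hence central. This verifies the bracket \eqref{eq:universal-central-extension-bracket}.

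\emph{Step 3 (injectivity).} Because $\Theta$ is graded, it suffices to work in a fixed $u$-degree $r$ and a fixed loop degree. For a current element $\sum_X c_X Xz^au^r$, I extract the coefficient of the linearly independent monomial $j_0^r X(a)$, which equals $\lambda^r c_X$. Images of central elements contain no $\g$-factor, so they cannot cancel current images. For the central element $\kappa_{k,r}$ with $r\ge1$, the coefficient of $j_0^{r-1}j_{-k}$ in $[z^{-k}]\mathcal J^r$ is a positive integer, and distinct $(k,r)$ land in distinct bidegrees. Finally, $\kappa_{0,0}\mapsto c_0\neq0$.

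The main obstacle is Step 2: making the formal resummation rigorous, so that brackets of infinite convergent sums in $B$ are computed termwise and the symbol map commutes with them. The first thing I would try is to do all computations with fixed PBW coefficients at a fixed cutoff, using Lemma~\ref{lem:continuous-pbw-coordinates}, so that each identity becomes a finite one.
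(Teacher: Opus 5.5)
Your proposal is correct and follows essentially the same route as the paper: convergence by local finiteness at each cutoff via the tail description; the bracket computed coefficientwise in $\operatorname{gr}_G$ with the $j_q$ central (a Cauchy product for the current part, residue pairing and chain rule for the cocycle); and injectivity via the coordinates of $j_0^rX(a)$ together with nonvanishing of the central images in separate bidegrees. One slip to fix: since $[z^{-k}]\mathcal J(z)^r=\sum_{q_1+\cdots+q_r=k}j_{q_1}\cdots j_{q_r}$, the monomial to use is $j_kj_0^{r-1}$ (coefficient $r$ for $k\neq0$), not $j_{-k}j_0^{r-1}$, which has weight $-k$ and does not occur when $k\neq0$.
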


\begin{proof}

Fix a cutoff $M$. A summand of \eqref{eq:theta-current-formula} can survive only if
\[
q_i\le M\quad(1\le i\le r),\qquad
a-q_1-\cdots-q_r\le M.
\]
Consequently $a-rM\le q_i\le M$, so only finitely many tuples survive.
For \eqref{eq:theta-central-basis-images}, the conditions are $q_i\le M$ and
$q_1+\cdots+q_r=k$, which similarly give
$k-(r-1)M\le q_i\le M$. Thus the normally ordered lifts are locally
finite in each quotient of \eqref{eq:evaluation-target-completion}.

It remains to verify explicitly that $\Theta$ preserves the bracket.
By bilinearity it is enough to take homogeneous $X,Y\in\g$ and monomials
\begin{equation}
f=z^au^r,\qquad g=z^bu^s,\qquad
a,b\in\mathbb Z,\quad r,s\in\mathbb Z_{\ge0}.
\label{eq:monomial-current-test-functions}
\end{equation}
After the substitution $u=\lambda\mathcal J(z)$, put
\begin{equation}
f_J(z)=z^a(\lambda\mathcal J(z))^r,\qquad
g_J(z)=z^b(\lambda\mathcal J(z))^s
\label{eq:formal-current-substitutions}
\end{equation}
and write
\begin{equation}
f_J(z)=\sum_{k\in\mathbb Z}f_kz^k,\qquad
g_J(z)=\sum_{l\in\mathbb Z}g_lz^l.
\label{eq:formal-current-laurent-expansions}
\end{equation}
With the empty-product convention when $r=0$ or $s=0$, the coefficients are
\begin{equation}
\begin{aligned}
f_k&=\lambda^r\sum_{q_1+\cdots+q_r=a-k}j_{q_1}\cdots j_{q_r},\\
g_l&=\lambda^s\sum_{v_1+\cdots+v_s=b-l}j_{v_1}\cdots j_{v_s}.
\end{aligned}
\label{eq:formal-current-laurent-coefficients}
\end{equation}
Consequently, \eqref{eq:theta-current-formula} is equivalently
\begin{equation}
\Theta(X\otimes f)=\sum_{k\in\mathbb Z}f_kX(k),\qquad
\Theta(Y\otimes g)=\sum_{l\in\mathbb Z}g_lY(l).
\label{eq:theta-laurent-expansions}
\end{equation}

All calculations below are coefficientwise: after a cutoff and an output
PBW coefficient are fixed, Lemma~\ref{lem:continuous-pbw-coordinates} and the preceding local-finiteness
estimate leave only finitely many summands.

We first justify treating $f_k$ and $g_l$ as commuting central coefficients
in $\operatorname{gr}_G\Bbd$. Before taking the $G$-symbol, each $j_q$
is represented by $\mathrm{Id}(q)$. Equations \eqref{eq:identity-mode-heisenberg-relations} and \eqref{eq:evaluation-target-filtration} give
\begin{equation}
[\mathrm{Id}(q),\mathrm{Id}(v)]
=q\delta_{q+v,0}\frac{\eta^2}{2}\in G_0B,
\qquad [\mathrm{Id}(q),X(k)]=0\quad(X\in\g).
\label{eq:identity-filtration-commutators}
\end{equation}

The first commutator in \eqref{eq:identity-filtration-commutators} has $G$-degree zero, two less than the product of its factors. Hence all $j_q$ commute in $\operatorname{gr}_G$, and they commute with every loop element $X(k)$, $X\in\g$. Formula \eqref{eq:theta-differential-formula} therefore sends every differential class to a central element; brackets involving the central summand of $\Tplus$ are preserved.

Since $X,Y\in\mathfrak{sl}(m|n)$, $\str(X)=\str(Y)=0$, so the second cocycle in \eqref{eq:evaluation-central-quotient-bracket} vanishes. In $G$-degree $r+s$ that bracket and \eqref{eq:theta-laurent-expansions} give
\begin{equation}
\begin{aligned}
[\Theta(X\otimes f),\Theta(Y\otimes g)]
={}&\sum_{k,l\in\mathbb Z}f_kg_l[X,Y](k+l)\\
&+c_0\str(XY)\sum_{k,l\in\mathbb Z}k\delta_{k+l,0}f_kg_l.
\end{aligned}
\label{eq:theta-expanded-bracket}
\end{equation}
Commutators among representatives of the form $\Id(q)$ have $G$-degree at most $r+s-2$ by \eqref{eq:identity-filtration-commutators}, so they do not occur in \eqref{eq:theta-expanded-bracket}.

For the noncentral part, the Cauchy product gives
\begin{equation}
\sum_{k+l=n}f_kg_l
=[z^n]\bigl(f_J(z)g_J(z)\bigr)
=[z^n]\bigl(z^{a+b}(\lambda\mathcal J(z))^{r+s}\bigr),
\label{eq:formal-current-coefficient-convolution}
\end{equation}
and therefore
\begin{equation}
\sum_{k,l\in\mathbb Z}f_kg_l[X,Y](k+l)=\Theta\bigl([X,Y]\otimes fg\bigr).
\label{eq:theta-noncentral-bracket}
\end{equation}
For the central part,
\begin{equation}
\operatorname{Res}_{z=0}g_J(z)d_zf_J(z)
=\sum_{k,l\in\mathbb Z}k\delta_{k+l,0}f_kg_l
=\sum_{k\in\mathbb Z}kf_kg_{-k}.
\label{eq:formal-current-residue-pairing}
\end{equation}
A term multiplied by $r$ is understood to be absent when $r=0$. Differentiating the substituted monomial explicitly gives
\begin{equation}
\begin{aligned}
d_zf_J(z)=\Bigl(&az^{a-1}(\lambda\mathcal J(z))^r\\
&+rz^a(\lambda\mathcal J(z))^{r-1}\frac d{dz}(\lambda\mathcal J(z))\Bigr)dz,
\end{aligned}
\label{eq:formal-current-chain-rule}
\end{equation}
and hence
\begin{equation}
\begin{aligned}
g_J(z)d_zf_J(z)=\Bigl(&az^{a+b-1}(\lambda\mathcal J(z))^{r+s}\\
&+rz^{a+b}(\lambda\mathcal J(z))^{r+s-1}\frac d{dz}(\lambda\mathcal J(z))\Bigr)dz.
\end{aligned}
\label{eq:formal-current-cocycle-product}
\end{equation}
On the other hand, the differential in $A_+$ is
\begin{equation}
g\,df=az^{a+b-1}u^{r+s}\,dz+rz^{a+b}u^{r+s-1}\,du.
\label{eq:monomial-differential-cocycle}
\end{equation}
Applying \eqref{eq:theta-differential-formula} to the last formula and comparing with \eqref{eq:formal-current-cocycle-product} yields
\begin{equation}
\Theta\bigl(\overline{g\,df}\bigr)
=c_0\operatorname{Res}_{z=0}g_J(z)d_zf_J(z).
\label{eq:theta-cocycle-residue}
\end{equation}
Thus the central part of \eqref{eq:theta-expanded-bracket} is
\begin{equation}
c_0\str(XY)\sum_{k\in\mathbb Z}kf_kg_{-k}
=\str(XY)\Theta\bigl(\overline{g\,df}\bigr).
\label{eq:theta-central-bracket}
\end{equation}
Combining the noncentral and central calculations,
\begin{equation}
[\Theta(X\otimes f),\Theta(Y\otimes g)]
=\Theta\bigl([X,Y]\otimes fg+\str(XY)\overline{g\,df}\bigr).
\label{eq:theta-lie-homomorphism}
\end{equation}
The expression in parentheses is the bracket \eqref{eq:universal-central-extension-bracket}, so $\Theta$ is a Lie homomorphism.

It remains to prove injectivity as a linear map. First split a finite element of $\Tplus$ by its $u$-degree, which becomes its $G$-degree. In fixed degree $r$, apply the continuous PBW-coordinate functionals indexed by the monomials $j_0^rX(a)$, where $X$ ranges over a fixed homogeneous basis of $\g$ and $a\in\mathbb Z$. On the image of $\sum_aX_a\otimes z^au^r$, these coordinates are precisely the coordinates of $\lambda^r\sum_aj_0^rX_a(a)$. The direct-sum decomposition $\g\otimes\mathbb C[t^{\pm1}]=\bigoplus_{a\in\mathbb Z}\g\otimes t^a$, together with the ordinary PBW theorem at the fixed level, shows that its scalar PBW coordinates vanish only when every $X_a$ vanishes. The image of the central summand is a series involving only the commuting elements $j_q$ and has no factor $X(a)$; it therefore cannot contribute to these coordinates.

For the central part of fixed $u$-degree $r\ge1$, set $P_{k,r}=[z^{-k}]\mathcal J(z)^r$. The grading $\operatorname{wt}(j_q)=q$ gives $P_{k,r}$ weight $k$, so different $k$ cannot cancel. Moreover, $P_{0,r}$ contains $j_0^r$, while for $k\ne0$ the coefficient of $j_kj_0^{r-1}$ is $r$; hence every $P_{k,r}$ is nonzero. Finally, $\kappa_{0,0}$ maps to the nonzero scalar $c_0$. The PBW coordinates used in these projections are legitimate and separating by Lemma~\ref{lem:continuous-pbw-coordinates}. Therefore $\Theta$ is injective.
\end{proof}

By the universal property of the enveloping superalgebra, we use the same symbol $\Theta$ for the induced unital algebra homomorphism
\begin{equation}
U(\Tplus)\longrightarrow\operatorname{gr}_G\Bbd.
\label{eq:theta-enveloping-homomorphism}
\end{equation}
The tensor-power separation argument for this enveloping map is given in Lemma~\ref{lem:tensor-power-separation}.

We use the evaluation homomorphism constructed in \cite[Theorem~5.1, formulas~(5.2)--(5.4)]{UedaConstruction}, specialized to
\(\varepsilon_1=\varepsilon_2=\tfrac12\), \(z=1\), and
\(c=c_0=(n-m)/2\). The level condition in that theorem is satisfied by
this specialization. For \(\alpha_{\mathrm{ev}}\in\mathbb C\), write
\begin{equation}
  \ev_{\alpha_{\mathrm{ev}}}\colon Y_\hbar\longrightarrow B.
  \label{eq:evaluation-homomorphism}
\end{equation}

\begin{proposition}\label{prop:evaluation-leading-symbols}
The map~\eqref{eq:evaluation-homomorphism} sends \(F_rY_\hbar\) into \(G_rB^{\mathrm{bd}}\), and
\begin{equation}
\begin{aligned}
  \sigma_r\ev_{\alpha_{\mathrm{ev}}}(x^\pm_{i,r})
    &=\Theta\rho(E^\pm_{i,r}),\\
  \sigma_r\ev_{\alpha_{\mathrm{ev}}}(h_{i,r})
    &=\Theta\rho(H_{i,r}).
\end{aligned}
\label{eq:evaluation-principal-symbols}
\end{equation}
\end{proposition}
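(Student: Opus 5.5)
The proof rests on the fact that the degree-zero and degree-one generators generate $Y_\hbar$, together with Ueda's explicit formulas. It has four steps.

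\emph{Step 1: degree zero.} Ueda's formulas (5.2)--(5.4) in \cite[Theorem~5.1]{UedaConstruction} send $x^\pm_{i,0}$ and $h_{i,0}$ to the Chevalley elements of \eqref{eq:affine-chevalley-normalization}, viewed in $B$. At the affine node $h_{0,0}$ goes to $-E_{11}(0)-E_{NN}(0)+c_0$. These images involve no $\Id(a)$ beyond what the splitting \eqref{eq:special-linear-scalar-splitting} produces. For the Chevalley elements that splitting cancels, since $D_i$ and $X_i^\pm$ lie in $\g$. Hence the images lie in $G_0B$. Their $G_0$-symbols equal $\Theta\rho$ of the degree-zero generators: by \eqref{eq:theta-current-formula} with $r=0$ one has $\Theta(X z^a)=X(a)$, and $\Theta(\kappa_{0,0})=c_0$, which matches the central term at the affine node.

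\emph{Step 2: the degree-one Cartan generators.} Ueda's formula for $\ev(h_{i,1})$ has the form
\[
\ev(h_{i,1})=\alpha_{\mathrm{ev}}\,\ev(h_{i,0})+\text{(explicit constant)}\cdot\ev(h_{i,0})+\sum_{s\in\mathbb Z}\sum_{a,b}\pm{:}E_{ab}(-s)E_{ba}(s){:}\text{-type quadratic terms}.
\]
Split each $E_{ab}$ by \eqref{eq:special-linear-scalar-splitting}. In a product of two $\mathfrak{gl}$ loop elements, the only terms that can reach $G$-degree one are those containing exactly one factor $\Id(q)$. A product of two $\g$-loop elements lies in $G_0$, and the $\Id(q)\Id(-q)$ terms have $G$-degree two. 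I must check that these degree-two terms cancel or are absent. My plan is to use the diagonal structure of the quadratic term: it pairs $E_{ab}$ with $E_{ba}$, so when $a\ne b$ no $\Id$ appears at all, and only the diagonal pairs $E_{aa}E_{bb}$ contribute.

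\emph{Step 3: the central computation.} This is the main obstacle. I will collect the coefficient of $\Id(q)\,X(-q)$ in the normally ordered sum and show it equals $\lambda\sum_q j_q D_i(-q)$, which is $\Theta\rho(H_{i,1})$ by \eqref{eq:theta-current-formula} with $r=1$. At node $0$ the extra term is $\Theta(\kappa_{0,1})=c_0\lambda j_0$, and it must come from the $c_0$ and $\alpha_{\mathrm{ev}}$ shifts. I expect that the $\Id\Id$ terms cancel because $\sum_a(-1)^{p(a)}d_i(a)=\str(D_i)=0$. The factor $1/\eta=\lambda$ should arise from \eqref{eq:special-linear-scalar-splitting}. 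The reordering corrections needed to bring the expression to normal order have strictly lower $G$-degree, by \eqref{eq:identity-filtration-commutators}. Local finiteness of the infinite sums, and the legitimacy of taking symbols coefficientwise, come from Lemma~\ref{lem:continuous-pbw-coordinates}. I would treat $\ev(x^\pm_{i,1})$ the same way, or avoid it by using $x^\pm_{i,1}=\pm a_{ii}^{-1}[h_{i,1},x^\pm_{i,0}]+\cdots$ modulo $F_0$; the resulting symbol then matches by the Lie property of $\Theta$.

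\emph{Step 4: induction on $r$.} Relation \eqref{eq:cartan-current-difference-relation} gives, for any $j$ with $a_{ij}\ne0$ (take $j=i$ for even nodes; for odd nodes use an adjacent $h_{j,1}$),
\[
x^\pm_{i,r+1}=\pm a_{ji}^{-1}[h_{j,1},x^\pm_{i,r}]+(\text{elements of }F_{r}Y_\hbar).
\]
Similarly $h_{i,r}=[x^+_{i,r},x^-_{i,0}]$. Since $\ev$ is an algebra map and $G$ is multiplicative, the inclusion $\ev(F_rY_\hbar)\subseteq G_r\Bbd$ follows by induction on $r$: $F_r$ is spanned by words in the generators, and by this recursion each generator of weight $r$ is, modulo $F_{r-1}$, a bracket of lower-weight ones. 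The principal symbols then satisfy
\[
\sigma_{r+1}\ev(x^\pm_{i,r+1})=\pm a_{ji}^{-1}\bigl[\Theta\rho(H_{j,1}),\Theta\rho(E^\pm_{i,r})\bigr]=\Theta\rho(E^\pm_{i,r+1}),
\]
using the Lie property from Lemma~\ref{lem:evaluation-symbol-embedding} and the relations \eqref{eq:classical-current-relations} in $\mathfrak s$. The Cartan case follows from the bracket $[x^+_{i,r},x^-_{i,0}]$ in the same way.
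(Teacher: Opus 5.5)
Your Steps~1 and~4 are sound and match the paper. The paper makes the recurrence exact with $\widetilde h_{k,1}=h_{k,1}-\tfrac12h_{k,0}^2$, while you keep the correction $-\tfrac12\{h_{j,0},x^\pm_{i,r}\}\in F_r$; the two are equivalent. The gap is the degree-one base case. That is the real content of the proposition, and Steps~2--3 only announce it.

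\emph{Wrong starting formula.} The cited formulas (5.3)--(5.4) of Ueda's Theorem~5.1 give $\ev(x^\pm_{i,1})$, not $\ev(h_{i,1})$. An expression for $\ev(h_{i,1})$ of the shape you write would first have to be computed as $[\ev(x^+_{i,1}),\ev(x^-_{i,0})]$.

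\emph{Unproved $G$-bound.} Even granting an expression with $E_{ab}(-s)E_{ba}(s)$-type terms, its diagonal summands are $E_{aa}(-s)E_{aa}(s)$, with two diagonal factors. So $\Id(-s)\Id(s)$ terms really occur, with total coefficient $\eta^{-2}\sum_a c_a$ for diagonal coefficients $c_a$. The bound $\ev(h_{i,1})\in G_1B$ needs this to vanish, and you only expect it. Your reason, $\str(D_i)=0$, presupposes $c_a=(-1)^{p(a)}d_i(a)$, which you have not derived. With $c_a=d_i(a)$ the count gives $2\eta^{-2}$ at $i=m$.

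\emph{Node-$0$ central term.} The term $c_0\lambda j_0$ cannot come from an $\alpha_{\mathrm{ev}}$-shift, since $\alpha_{\mathrm{ev}}\ev(h_{0,0})\in G_0B$. The degree-one symbol is independent of $\alpha_{\mathrm{ev}}$.

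The missing idea is to do the degree-one computation on $x^\pm_{i,1}$, where it is clean:
\begin{enumerate}[(i)]
\item In each quadratic sum of (5.3)--(5.4) at most one factor is diagonal: $k=i$ or $k=i+1$ at finite nodes, $k=N$ or $k=1$ at node~$0$. No $\Id\Id$ term can arise, so the bound $G$-degree $\le1$ is immediate.
\item Apply the splitting $E_{aa}=E^0_{aa}+(-1)^{p(a)}\eta^{-1}\Id$. The two resulting $\Id$-parts carry the same coefficient, namely $\lambda$ up to the sign appearing in $\rho(E^\pm_{i,1})$.
\item Their index ranges, $q\le0$ and $q\ge1$ for $x^+$ (shifted by one for $x^-$), partition $\mathbb Z$. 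This gives, for example, $\sigma_1\ev(x^+_{i,1})=\lambda\sum_q j_qE_{i,i+1}(-q)=\Theta\rho(E^+_{i,1})$.
\item Then $\sigma_1\ev(h_{i,1})=[\Theta\rho(E^+_{i,1}),\Theta\rho(E^-_{i,0})]=\Theta\rho(H_{i,1})$, because $\Theta$ is a Lie homomorphism.
\item At node~$0$ the central term $c_0\lambda j_0$ comes from the cocycle at $q=0$ in $\bigl[\lambda\sum_qj_qE_{N1}(1-q),-E_{1N}(-1)\bigr]$, and it matches $\Theta(\kappa_{0,1})$.
\end{enumerate}
With this base case in place, your Step~4 completes the proof.
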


\begin{proof}
For \(r=0\), the values in \cite[(5.2)]{UedaConstruction} coincide term by term with
\(\Theta\rho\), using \eqref{eq:finite-generator-assignment}--\eqref{eq:affine-generator-assignment}: formula~\eqref{eq:theta-current-formula} gives
\(\Theta(Xz^a)=X(a)\), while \(\Theta(\kappa_{0,0})=c_0\).

We calculate the degree-one symbols explicitly. From \eqref{eq:special-linear-scalar-splitting},
\begin{equation}
  E_{aa}(q)=E^0_{aa}(q)
     +\frac{(-1)^{p(a)}}{\eta}\Id(q),
  \qquad \lambda=\eta^{-1}.
  \label{eq:evaluation-diagonal-splitting}
\end{equation}
Consider first \(x^+_{i,1}\), where \(1\leq i<N\), using the finite-node
case of \cite[(5.3)]{UedaConstruction}. Its linear term has \(G\)-degree zero. In the first
quadratic sum, with matrix factors \(E_{ik}(-s)E_{k,i+1}(s)\) and
\(1\leq k\leq i\), only \(k=i\) gives a diagonal factor. Its part
containing \(\Id\) is
\begin{equation}
  \frac{((-1)^{p(i)})^2}{\eta}
       \sum_{s\geq0}\Id(-s)E_{i,i+1}(s)
   =\lambda\sum_{s\geq0}\Id(-s)E_{i,i+1}(s).
  \label{eq:positive-finite-diagonal-evaluation-term}
\end{equation}
In the second quadratic sum, with matrix factors
\(E_{ik}(-s-1)E_{k,i+1}(s+1)\) and \(i+1\leq k\leq N\), only
\(k=i+1\) gives a diagonal factor. Its coefficient is again
\(\eta^{-1}\), and its part containing \(\Id\) is
\begin{equation}
  \lambda\sum_{s\geq0}\Id(s+1)E_{i,i+1}(-s-1).
  \label{eq:positive-finite-shifted-diagonal-term}
\end{equation}
Here \(\Id(s+1)\) commutes with the off-diagonal loop element by \eqref{eq:identity-mode-heisenberg-relations}.
The ranges \(q=-s\leq0\) and \(q=s+1\geq1\) partition \(\mathbb Z\).
All remaining summands have \(G\)-degree zero, and no quadratic summand
contains two diagonal factors. Thus the evaluation has \(G\)-degree at
most one and
\begin{equation}
  \sigma_1\ev_{\alpha_{\mathrm{ev}}}(x^+_{i,1})
   =\lambda\sum_{q\in\mathbb Z}j_qE_{i,i+1}(-q)
   =\Theta\rho(E^+_{i,1}).
  \label{eq:positive-finite-evaluation-symbol}
\end{equation}

\begingroup\emergencystretch=2em
For \(x^-_{i,1}\), use the finite-node case of \cite[(5.4)]{UedaConstruction}. The first
quadratic sum has factors \(E_{i+1,k}(-s)E_{ki}(s)\), with
\(1\leq k\leq i\); its diagonal index is \(k=i\), in the second
factor. The second sum has factors
\(E_{i+1,k}(-s-1)E_{ki}(s+1)\), with \(i+1\leq k\leq N\);
its diagonal index is \(k=i+1\), in the first factor. Including the
outer factor \((-1)^{p(i)}\) in the cited formula, the coefficients of
the resulting \(\Id\)-terms are respectively\par\endgroup
\begin{equation}
  \frac{((-1)^{p(i)})^3}{\eta}=(-1)^{p(i)}\lambda,
  \qquad
  \frac{(-1)^{p(i)}((-1)^{p(i+1)})^2}{\eta}
      =(-1)^{p(i)}\lambda.
  \label{eq:negative-finite-evaluation-coefficients}
\end{equation}
The corresponding ranges are \(q=s\geq0\) and
\(q=-s-1\leq-1\). They partition \(\mathbb Z\), and every other term
has \(G\)-degree zero. Consequently,
\begin{equation}
  \sigma_1\ev_{\alpha_{\mathrm{ev}}}(x^-_{i,1})
   =(-1)^{p(i)}\lambda
       \sum_{q\in\mathbb Z}j_qE_{i+1,i}(-q)
   =\Theta\rho(E^-_{i,1}).
  \label{eq:negative-finite-evaluation-symbol}
\end{equation}

\begingroup\emergencystretch=2em
At the affine vertex, the quadratic sum in the \(i=0\) case of
\cite[(5.3)]{UedaConstruction} has factors \(E_{Nk}(-s)E_{k1}(s+1)\).
Only \(k=N\) and \(k=1\) produce diagonal factors, respectively
\(E_{NN}(-s)\) and \(E_{11}(s+1)\). After \eqref{eq:evaluation-diagonal-splitting}, their coefficients
are both \(\lambda\), and their ranges are \(q=-s\leq0\) and
\(q=s+1\geq1\). Hence\par\endgroup
\begin{equation}
  \sigma_1\ev_{\alpha_{\mathrm{ev}}}(x^+_{0,1})
   =\lambda\sum_{q\in\mathbb Z}j_qE_{N1}(1-q)
   =\Theta\rho(E^+_{0,1}).
  \label{eq:positive-affine-evaluation-symbol}
\end{equation}
Similarly, the quadratic sum in the \(i=0\) case of \cite[(5.4)]{UedaConstruction} has
factors \(E_{1k}(-s-1)E_{kN}(s)\), with an outer minus sign.
The diagonal indices \(k=1\) and \(k=N\) give the ranges
\(q=-s-1\leq-1\) and \(q=s\geq0\), both with coefficient
\(-\lambda\). Thus
\begin{equation}
  \sigma_1\ev_{\alpha_{\mathrm{ev}}}(x^-_{0,1})
   =-\lambda\sum_{q\in\mathbb Z}j_qE_{1N}(-1-q)
   =\Theta\rho(E^-_{0,1}).
  \label{eq:negative-affine-evaluation-symbol}
\end{equation}
The linear terms and all quadratic terms without a diagonal factor
have \(G\)-degree zero, so these computations also establish the
filtration bound at the affine vertex.

The defining relation \(h_{i,1}=[x^+_{i,1},x^-_{i,0}]\) and
multiplicativity of \(\ev_{\alpha_{\mathrm{ev}}}\) now give
\begin{equation}
\begin{aligned}
  \sigma_1\ev_{\alpha_{\mathrm{ev}}}(h_{i,1})
    &=[\Theta\rho(E^+_{i,1}),\Theta\rho(E^-_{i,0})]\\
    &=\Theta\rho(H_{i,1}).
\end{aligned}
\label{eq:cartan-degree-one-evaluation-symbol}
\end{equation}
For \(i=0\), the central term is visible explicitly:
\begin{equation}
\begin{aligned}
 \left[\lambda\sum_{q\in\mathbb Z}j_qE_{N1}(1-q),
          -E_{1N}(-1)\right]
  &=\lambda\sum_{q\in\mathbb Z}j_q(-E_{11}-E_{NN})(-q)
       +c_0\lambda j_0.
\end{aligned}
\label{eq:affine-cartan-central-evaluation-term}
\end{equation}
The cocycle is nonzero only for \(q=0\), and
\(\str(E_{N1}(-E_{1N}))=1\). The right-hand side is
\begin{equation}
  \Theta\rho(H_{0,1})
    =\Theta\bigl((-E_{11}-E_{NN})\otimes u+\kappa_{0,1}\bigr),
  \label{eq:affine-cartan-theta-image}
\end{equation}
because \(\Theta(\kappa_{0,1})=c_0\lambda j_0\).

We next derive the recurrence needed for all higher current degrees
directly from the defining relations. Set
\begin{equation}
  \widetilde h_{k,1}=h_{k,1}-\frac12h_{k,0}^2.
  \label{eq:corrected-degree-one-cartan}
\end{equation}
We claim that
\begin{equation}
  [\widetilde h_{k,1},x^\pm_{j,r}]
     =\pm a_{kj}x^\pm_{j,r+1}.
  \label{eq:yangian-higher-current-recurrence}
\end{equation}
Taking the first current index to be zero in \eqref{eq:cartan-current-difference-relation}, with
\(\hbar=1\), gives
\begin{equation}
  [h_{k,1},x^\pm_{j,r}]-[h_{k,0},x^\pm_{j,r+1}]
       =\pm\frac{a_{kj}}2\{h_{k,0},x^\pm_{j,r}\}.
  \label{eq:degree-one-cartan-difference-relation}
\end{equation}
The third relation in \eqref{eq:yangian-basic-relations}, with current index \(r+1\), is
\begin{equation}
  [h_{k,0},x^\pm_{j,r+1}]=\pm a_{kj}x^\pm_{j,r+1}.
  \label{eq:zero-mode-cartan-action-higher-current}
\end{equation}
Substitution into \eqref{eq:degree-one-cartan-difference-relation} yields
\begin{equation}
  [h_{k,1},x^\pm_{j,r}]
    =\pm a_{kj}x^\pm_{j,r+1}
       \pm\frac{a_{kj}}2\{h_{k,0},x^\pm_{j,r}\}.
  \label{eq:degree-one-cartan-action-expanded}
\end{equation}
Since \(h_{k,0}\) is even, the super-Leibniz rule introduces no extra
sign. Using \eqref{eq:yangian-basic-relations} again gives
\begin{equation}
\begin{aligned}
 [h_{k,0}^2,x^\pm_{j,r}]
   &=h_{k,0}[h_{k,0},x^\pm_{j,r}]
       +[h_{k,0},x^\pm_{j,r}]h_{k,0}\\
   &=\pm a_{kj}\bigl(h_{k,0}x^\pm_{j,r}
                     +x^\pm_{j,r}h_{k,0}\bigr)\\
   &=\pm a_{kj}\{h_{k,0},x^\pm_{j,r}\}.
\end{aligned}
\label{eq:cartan-square-commutator}
\end{equation}
Consequently, \eqref{eq:corrected-degree-one-cartan}, \eqref{eq:degree-one-cartan-action-expanded}, and \eqref{eq:cartan-square-commutator} imply
\[
 [\widetilde h_{k,1},x^\pm_{j,r}]
   =[h_{k,1},x^\pm_{j,r}]
      -\frac12[h_{k,0}^2,x^\pm_{j,r}]
   =\pm a_{kj}x^\pm_{j,r+1},
\]
proving \eqref{eq:yangian-higher-current-recurrence}.

For every \(j\in I\), choose a finite node \(k\) with
\(a_{kj}\ne0\): take \(k=j\) for a non-isotropic finite node and a
finite neighbour for \(j=0\) or \(j=m\). Formula~\eqref{eq:yangian-higher-current-recurrence} gives
\begin{equation}
 \ev_{\alpha_{\mathrm{ev}}}(x^\pm_{j,r+1})
   =\frac{\pm1}{a_{kj}}
       [\ev_{\alpha_{\mathrm{ev}}}(\widetilde h_{k,1}),
        \ev_{\alpha_{\mathrm{ev}}}(x^\pm_{j,r})].
 \label{eq:higher-root-evaluation-recurrence}
\end{equation}
Because \(\ev_{\alpha_{\mathrm{ev}}}(h_{k,0}^2)\in G_0B\),
formula~\eqref{eq:cartan-degree-one-evaluation-symbol} gives
\begin{equation}
 \sigma_1\ev_{\alpha_{\mathrm{ev}}}(\widetilde h_{k,1})
       =\Theta\rho(H_{k,1}).
 \label{eq:corrected-cartan-evaluation-symbol}
\end{equation}
Suppose the assertion for \(x^\pm_{j,r}\) is known in degree \(r\).
The filtered multiplication and commutator in the target show from
\eqref{eq:higher-root-evaluation-recurrence} that its next evaluation lies in \(G_{r+1}B\). Taking its symbol,
using \eqref{eq:corrected-cartan-evaluation-symbol}, and applying the Lie homomorphism \(\Theta\rho\), we obtain
\begin{equation}
\begin{aligned}
 \sigma_{r+1}\ev_{\alpha_{\mathrm{ev}}}(x^\pm_{j,r+1})
   &=\frac{\pm1}{a_{kj}}
        [\Theta\rho(H_{k,1}),\Theta\rho(E^\pm_{j,r})]\\
   &=\Theta\rho(E^\pm_{j,r+1}).
\end{aligned}
\label{eq:higher-root-evaluation-symbol}
\end{equation}
Here \([H_{k,1},E^\pm_{j,r}]=\pm a_{kj}E^\pm_{j,r+1}\) in
\(\mathfrak s\), by \eqref{eq:classical-current-relations}. This proves both the symbol formula and the
filtration bound for every \(x^\pm_{j,r}\). Finally,
\begin{equation}
 h_{i,r}=[x^+_{i,r},x^-_{i,0}]
 \label{eq:cartan-as-opposite-root-bracket}
\end{equation}
proves the same assertions for every \(h_{i,r}\). Multiplicativity then
gives \(\ev_{\alpha_{\mathrm{ev}}}(F_rY_\hbar)\subseteq
G_rB^{\mathrm{bd}}\), as required.
\end{proof}

\section{Coproduct in the affine super Yangian}
\label{sec:coproduct}
The coproduct construction for ordinary affine Yangians was developed by Guay, Nakajima, and Wendlandt \cite[Theorem~4.9 and Proposition~5.18]{GNW}. The corresponding coproduct for the affine super Yangian considered here was constructed by Ueda \cite[Theorem~4.3, formula~(4.4)]{UedaConstruction}, following that framework. We first specify the completed tensor product in which its values lie.

\subsection{The completed tensor product}
\label{subsec:completed-tensor-product}
For this section write $Y=Y_{\hbar}$. The completion used in the coproduct comes from the affine loop grading:
\begin{equation}
\deg_{\ell}h_{i,r}=0,\qquad
\deg_{\ell}x^{\pm}_{i,r}=\begin{cases}\pm1,&i=0,\\0,&i\ne0.\end{cases}
\label{eq:yangian-affine-loop-grading}
\end{equation}
For $d\ge1$, put $A_d=Y^{\otimes d}$ with total loop grading and define
\begin{equation}
T_MA_d(q)=\sum_{s>M}A_d(q-s)A_d(s),\qquad
\widehat{Y^{\otimes d}}=\bigoplus_{q\in\mathbb Z}\varprojlim_M A_d(q)/T_MA_d(q).
\label{eq:completed-tensor-power}
\end{equation}
These are vector-space quotients; the tails need not be ideals. Give $A_d$ the total current filtration
\begin{equation}
F^{\mathrm{tot}}_pA_d=\sum_{\substack{p_1,\ldots,p_d\ge0\\p_1+\cdots+p_d\le p}}
F_{p_1}Y\otimes\cdots\otimes F_{p_d}Y
\label{eq:total-current-tensor-filtration}
\end{equation}
and denote its closure in \eqref{eq:completed-tensor-power} by $\widehat F_pA_d$. Equivalently,
\begin{equation}
(\widehat F_pA_d)(q)=\varprojlim_M\operatorname{im}\bigl(F^{\mathrm{tot}}_pA_d(q)\longrightarrow A_d(q)/T_MA_d(q)\bigr).
\label{eq:closed-current-tensor-filtration}
\end{equation}

Continuity of multiplication gives
\begin{equation}
  \widehat F_aA_d\,\widehat F_bA_d\subseteq\widehat F_{a+b}A_d,
  \qquad
  [\widehat F_aA_d,\widehat F_bA_d]\subseteq\widehat F_{a+b}A_d.
  \label{eq:completed-filtration-multiplicativity}
\end{equation}

We use the coassociative, loop-graded coproduct constructed in
\cite[Definition~4.1, Theorem~4.3, formula~(4.4)]{UedaConstruction}, specialized to
$\varepsilon_1=\varepsilon_2=\tfrac12$, and denote it by
\begin{equation}
  \Delta\colon Y\longrightarrow\widehat{Y\otimes Y}.
  \label{eq:completed-yangian-coproduct}
\end{equation}
Its tensor extensions and iterates are taken in the total-loop-degree
completions \eqref{eq:completed-tensor-power}. All factors in the cited correction have current degree zero.
The root sum converges in the stated completion: if a root vector
$x^a_\alpha$ has positive loop degree $q$, its paired tensor satisfies
\begin{equation}
  x^a_{-\alpha}\otimes x^a_\alpha
  =(x^a_{-\alpha}\otimes1)(1\otimes x^a_\alpha)
  \in A_2(-q)A_2(q).
  \label{eq:root-sum-tail-convergence}
\end{equation}
Here the root vectors denote their current-degree-zero images in $Y$, as in
the cited formula. The displayed tensor vanishes modulo $T_MA_2(0)$ for
$q>M$, and only finitely many affine roots, counted with multiplicity,
occur at each fixed loop degree. Thus the sum is Cauchy, and its limit
belongs to the closed subspace $\widehat F_0A_2$.

Put $\widetilde h_{i,1}=h_{i,1}-\tfrac12h_{i,0}^2$.
The cited formula gives the following local notation and filtration property:
\begin{equation}
  \begin{aligned}
    P_2(a)&=a\otimes1+1\otimes a,\\
    R_i&:=\Delta(\widetilde h_{i,1})-P_2(\widetilde h_{i,1})
          \in\widehat F_0A_2.
  \end{aligned}
  \label{eq:coproduct-cartan-correction}
\end{equation}

\begin{lemma}
\label{lem:coproduct-leading-term}
For $A_{i,r}=x^\pm_{i,r}$ or $h_{i,r}$,
\begin{equation}
  \Delta(A_{i,r})=P_2(A_{i,r})+Q_{i,r},
  \qquad Q_{i,r}\in\widehat F_{r-1}A_2,
  \label{eq:coproduct-lower-current-correction}
\end{equation}
where $\widehat F_{-1}A_2=0$. If
\begin{equation}
  P_d(a)=\sum_{\nu=1}^d
    1^{\otimes(\nu-1)}\otimes a\otimes1^{\otimes(d-\nu)},
  \label{eq:primitive-tensor-sum}
\end{equation}
then
\begin{equation}
  \Delta^{(d-1)}(A_{i,r})
  =P_d(A_{i,r})\pmod{\widehat F_{r-1}A_d}.
  \label{eq:iterated-coproduct-principal-part}
\end{equation}
Thus the top current-degree operation is the ordinary primitive coproduct
$\Delta_0$.
\end{lemma}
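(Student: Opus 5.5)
We argue by induction on the current index $r$, starting from the degree-zero and degree-one generators where the coproduct formula of \cite[Theorem~4.3, formula~(4.4)]{UedaConstruction} is explicit. For $r=0$ all generators $x^\pm_{i,0},h_{i,0}$ are primitive in the cited formula, so $Q_{i,0}=0$, consistent with $\widehat F_{-1}A_2=0$. For $\widetilde h_{i,1}$ the statement is exactly \eqref{eq:coproduct-cartan-correction}: $R_i\in\widehat F_0A_2$. Since $h_{i,0}^2$ has $\Delta(h_{i,0}^2)=P_2(h_{i,0})^2=P_2(h_{i,0}^2)+2h_{i,0}\otimes h_{i,0}$, and the last term lies in $F^{\mathrm{tot}}_0A_2$, we get $\Delta(h_{i,1})=P_2(h_{i,1})+Q_{i,1}$ with $Q_{i,1}\in\widehat F_0A_2$.

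For the root generators we use the recurrence \eqref{eq:yangian-higher-current-recurrence}: for each $j$ fix a finite node $k$ with $a_{kj}\ne0$, so $x^\pm_{j,r+1}=\pm a_{kj}^{-1}[\widetilde h_{k,1},x^\pm_{j,r}]$. Applying $\Delta$, which is a continuous algebra homomorphism into $\widehat{Y\otimes Y}$, gives
\[
\Delta(x^\pm_{j,r+1})=\pm a_{kj}^{-1}\bigl[P_2(\widetilde h_{k,1})+R_k,\;P_2(x^\pm_{j,r})+Q_{j,r}\bigr].
\]
The bracket of the two primitive parts is $P_2([\widetilde h_{k,1},x^\pm_{j,r}])$, because the cross terms $[a\otimes1,1\otimes b]$ vanish in the super tensor product; this yields $P_2(x^\pm_{j,r+1})$. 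The remaining three brackets lie in $\widehat F_{r}A_2$ by \eqref{eq:completed-filtration-multiplicativity}, using $P_2(\widetilde h_{k,1})\in\widehat F_1A_2$, $R_k\in\widehat F_0A_2$, $P_2(x^\pm_{j,r})\in F^{\mathrm{tot}}_rA_2$, and the inductive hypothesis $Q_{j,r}\in\widehat F_{r-1}A_2$. This gives the claim for $x^\pm_{j,r+1}$. The Cartan generators then follow from $h_{i,r}=[x^+_{i,r},x^-_{i,0}]$ by the same bracket estimate, since $x^-_{i,0}$ is primitive.

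For the iterated statement \eqref{eq:iterated-coproduct-principal-part} we induct on $d$, writing $\Delta^{(d)}=(\Delta\otimes\Id^{\otimes(d-1)})\circ\Delta^{(d-1)}$. The operator $\Delta\otimes\Id^{\otimes(d-1)}$ maps $\widehat F_pA_d$ into $\widehat F_pA_{d+1}$. To see this, first show from the generator case that $\Delta(F_pY)\subseteq\widehat F_pA_2$, using multiplicativity and the fact that $F_pY$ is spanned by words. Then tensor with the identity and extend by continuity; this requires that $\Delta\otimes\Id$ respects the tail subspaces $T_M$, which holds because $\Delta$ preserves the loop grading. Applied to $P_d(A_{i,r})$, the first tensor slot contributes $P_2(A_{i,r})\otimes1^{\otimes(d-1)}$ modulo $\widehat F_{r-1}A_{d+1}$, and the other slots are unchanged. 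This gives $P_{d+1}(A_{i,r})$.

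The main obstacle is the continuity bookkeeping rather than the algebra. One must check that the extension of $\Delta\otimes\Id$ to the completion is well defined, that it maps closures of filtration pieces into closures, and that brackets of completed elements stay in the completed filtration. The first thing to try is a reduction to finite cutoffs: compute modulo $T_MA_d(q)$, where every element is a finite sum. Grading preservation ensures that the tails $T_M$ map into tails, so the filtration estimates at each cutoff pass to the inverse limit by \eqref{eq:closed-current-tensor-filtration}.
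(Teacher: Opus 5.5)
Your proposal is correct and follows essentially the same route as the paper: induction on $r$ through the recurrence with $\widetilde h_{k,1}$, the same three-term bracket estimate in $\widehat F_rA_2$, the Cartan generators via $h_{i,r}=[x^+_{i,r},x^-_{i,0}]$, and induction on $d$ using $\Delta(F_pY)\subseteq\widehat F_pA_2$ and continuity. The only differences are cosmetic: you deduce $h_{i,1}$ from $R_i$ and $h_{i,0}\otimes h_{i,0}$ instead of reading it directly off Ueda's formula, and you fold the $r=0$ step into the uniform induction.
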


Here and below,
\begin{equation}
  \Delta_0\colon U(\Tplus)\longrightarrow U(\Tplus)^{\otimes2},
  \qquad\Delta_0(x)=x\otimes1+1\otimes x\quad(x\in\Tplus),
  \label{eq:classical-primitive-coproduct}
\end{equation}
is the unique algebra homomorphism determined by primitive values on the
Lie generators; $\Delta_0^{(d-1)}$ denotes its iterated form.

\begin{proof}
For $r=0$ the assertion follows from \cite[formula~(4.4)]{UedaConstruction}. The same formula
gives the assertion for $h_{i,1}$. For fixed $j$, choose $k$ with
$a_{kj}\ne0$. In every formula below, the upper or lower signs are taken
consistently. Applying $\Delta$ to
\begin{equation}
  [\widetilde h_{k,1},x^\pm_{j,r}]
  =\pm a_{kj}x^\pm_{j,r+1}
  \label{eq:coproduct-current-recurrence}
\end{equation}
at $r=0$ gives
\begin{equation}
  \Delta(x^\pm_{j,1})
  =P_2(x^\pm_{j,1})
    \pm\frac{1}{a_{kj}}[R_k,P_2(x^\pm_{j,0})].
  \label{eq:coproduct-degree-one-root-formula}
\end{equation}
The correction lies in $\widehat F_0A_2$ by \eqref{eq:completed-filtration-multiplicativity} and \eqref{eq:coproduct-cartan-correction}.
All brackets and tensor products here use the super sign conventions.

Suppose
$\Delta(x^\pm_{j,r})=P_2(x^\pm_{j,r})+Q^\pm_{j,r}$ with
$Q^\pm_{j,r}\in\widehat F_{r-1}A_2$.
Applying $\Delta$ to \eqref{eq:coproduct-current-recurrence} and subtracting the primitive part gives the
exact recurrence
\begin{equation}
  \begin{aligned}
    \pm a_{kj}Q^\pm_{j,r+1}
    ={}&[P_2(\widetilde h_{k,1}),Q^\pm_{j,r}]\\
       &+[R_k,P_2(x^\pm_{j,r})]+[R_k,Q^\pm_{j,r}].
  \end{aligned}
  \label{eq:coproduct-correction-recurrence}
\end{equation}
The three terms have filtration degrees at most $r$, $r$, $r-1$,
respectively, by \eqref{eq:completed-filtration-multiplicativity}. Therefore
$Q^\pm_{j,r+1}\in\widehat F_rA_2$. Finally,
\begin{equation}
  \Delta(h_{i,r})-P_2(h_{i,r})
  =[Q^+_{i,r},P_2(x^-_{i,0})]\in\widehat F_{r-1}A_2.
  \label{eq:coproduct-cartan-filtration-bound}
\end{equation}
This proves \eqref{eq:coproduct-lower-current-correction} and also
$\Delta(F_pY)\subseteq\widehat F_pA_2$.
Continuity, coassociativity of the cited coproduct, and induction on $d$
then give \eqref{eq:iterated-coproduct-principal-part}.
\end{proof}

\subsection{Continuity of tensor evaluation}\label{subsec:tensor-evaluation-continuity}

Let
\begin{equation}
B_d=\widehat{B_0^{\otimes d}}
\label{eq:completed-evaluation-tensor-target}
\end{equation}
be the analogous total-loop-degree completion; thus $B_1=B$. On $B_0^{\otimes d}$ let $G_p^{\mathrm{alg}}$ be the span of tensor PBW monomials containing at most $p$ factors of the form $\Id(a)$ in total, and define its closure by
\begin{equation}
(G_p^{\mathrm{tot}}B_d)(q)
=\varprojlim_M\operatorname{im}\bigl(G_p^{\mathrm{alg}}(q)\longrightarrow
B_0^{\otimes d}(q)/T_MB_0^{\otimes d}(q)\bigr).
\label{eq:total-identity-tensor-filtration}
\end{equation}
Write
\begin{equation}
\sigma_p^{(d)}:G_p^{\mathrm{tot}}B_d\longrightarrow
G_p^{\mathrm{tot}}B_d/G_{p-1}^{\mathrm{tot}}B_d.
\label{eq:tensor-evaluation-symbol-map}
\end{equation}
There is a canonical graded linear map
\begin{equation}
(\operatorname{gr}_G\Bbd)^{\otimes d}\longrightarrow
\bigoplus_{p\ge0}G_p^{\mathrm{tot}}B_d/G_{p-1}^{\mathrm{tot}}B_d
\label{eq:graded-algebraic-tensor-inclusion}
\end{equation}
which, on homogeneous representatives $b_\nu\in G_{p_\nu}\Bbd$, sends
\begin{equation}
\sigma_{p_1}(b_1)\otimes\cdots\otimes\sigma_{p_d}(b_d)
\longmapsto\sigma_{p_1+\cdots+p_d}^{(d)}(b_1\otimes\cdots\otimes b_d).
\label{eq:graded-tensor-inclusion-on-symbols}
\end{equation}
This is well defined because replacing one representative by an element of $G_{p_\nu-1}\Bbd$ lowers the total $G$-degree. It is injective: a nonzero element of the algebraic tensor product on the left has a nonzero tensor PBW coordinate, and Lemma~\ref{lem:continuous-pbw-coordinates} shows that the same coordinate is defined and has the same value on the right. We henceforth use \eqref{eq:graded-algebraic-tensor-inclusion} to regard $(\operatorname{gr}_G\Bbd)^{\otimes d}$ as a graded subspace of the right-hand side.

If $f:C\to D$ is a loop-graded algebra homomorphism, then
\begin{equation}
f(C(q-s)C(s))\subseteq D(q-s)D(s),\qquad
f(T_MC(q))\subseteq T_MD(q),
\label{eq:loop-graded-homomorphism-tail-continuity}
\end{equation}
so it extends continuously to standard completions. Ueda's coproduct and evaluation assignments are loop-graded. The coproduct tails are locally finite by \eqref{eq:root-sum-tail-convergence}. In the quadratic evaluation sums \cite[(5.3)--(5.4)]{UedaConstruction}, the pairs of factor degrees are
\begin{equation}
(-s,s),\quad(-s-1,s+1),\quad(-s,s+1),\quad(-s-1,s).
\label{eq:quadratic-evaluation-loop-degree-pairs}
\end{equation}
The last two pairs are the affine-node cases. In every pair the right-hand factor has loop degree $s$ or $s+1$. Hence, for a fixed cutoff $M$, every summand with $s>M$ belongs to $T_M$, and only the finitely many indices $0\le s\le M$ can survive modulo $T_M$. This explicitly proves the required local finiteness and continuity.

Proposition~\ref{prop:evaluation-leading-symbols} and multiplicativity show that, after the canonical map from the algebraic tensor product $B^{\otimes d}$ to its total-loop-degree completion $B_d$,
\begin{equation}
\ev_{\alpha_{\mathrm{ev}}}^{\otimes d}(F_p^{\mathrm{tot}}A_d)
\subseteq G_p^{\mathrm{tot}}B_d.
\label{eq:tensor-evaluation-filtration-bound}
\end{equation}
Indeed, each evaluated factor of current filtration degree $p_\nu$ belongs to $G_{p_\nu}\Bbd$ and is, at every cutoff, represented by a finite linear combination of PBW monomials with at most $p_\nu$ factors of the form $\Id(a)$. A tensor product with $p_1+\cdots+p_d\le p$ is therefore represented at every cutoff by tensor PBW monomials with at most $p$ such factors in total. This is exactly membership in the right-hand side of \eqref{eq:tensor-evaluation-filtration-bound}.

The space $G_p^{\mathrm{tot}}B_d$ is closed by its inverse-limit definition \eqref{eq:total-identity-tensor-filtration}, and tensor evaluation is continuous by the preceding cutoff calculation. Taking the closure of $F_p^{\mathrm{tot}}A_d$ in \eqref{eq:closed-current-tensor-filtration} therefore proves
\begin{equation}
\widehat{\ev_{\alpha_{\mathrm{ev}}}^{\otimes d}}(\widehat F_pA_d)
\subseteq G_p^{\mathrm{tot}}B_d.
\label{eq:completed-tensor-evaluation-filtration-bound}
\end{equation}
Consequently,
\begin{equation}
\Psi_d=\widehat{\ev_{\alpha_{\mathrm{ev}}}^{\otimes d}}\circ\Delta^{(d-1)}:
Y\longrightarrow B_d
\label{eq:iterated-coproduct-evaluation}
\end{equation}
is well defined. Lemma~\ref{lem:coproduct-leading-term} and \eqref{eq:completed-tensor-evaluation-filtration-bound} also give
\begin{equation}
\Psi_d(F_pY)\subseteq G_p^{\mathrm{tot}}B_d.
\label{eq:tensor-evaluation-current-filtration}
\end{equation}
We use fixed continuous PBW-coordinate functionals to compute the graded symbols below.

\begin{lemma}\label{lem:iterated-evaluation-symbols}
Let $v\in U(\Tplus)$ be homogeneous of current degree $r$, and choose a homogeneous polynomial lift $\widetilde v$ in the Yangian generators. For every tensor PBW coefficient $c_M$ whose indexing monomial $M$ has total $G$-degree $r$,
\begin{equation}
c_M\bigl(\sigma_r^{(d)}\Psi_d(\widetilde v)\bigr)
=c_M\bigl(\Theta^{\otimes d}\Delta_0^{(d-1)}(v)\bigr).
\label{eq:tensor-evaluation-leading-coefficients}
\end{equation}
In particular, the right side is independent of the chosen lift.
\end{lemma}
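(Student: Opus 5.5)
The plan is to reduce to the case where $v$ is a product of Lie generators and then use multiplicativity of every map involved. By Proposition~\ref{prop:classical-presentation-isomorphism}, $U(\Tplus)=U(\sfr)$ is generated by the $E^\pm_{i,r},H_{i,r}$. A homogeneous element $v$ of current degree $r$ is therefore a finite linear combination of words $w=g_1\cdots g_k$ in these generators with $\sum\deg g_\nu=r$. The homogeneous polynomial lift $\widetilde v$ is a combination of the corresponding Yangian words $\widetilde w=\widetilde g_1\cdots\widetilde g_k$, where $\widetilde g_\nu\in\{x^\pm_{i,r_\nu},h_{i,r_\nu}\}$. Both sides of \eqref{eq:tensor-evaluation-leading-coefficients} are linear in the lift. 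For the left side this uses that $\Psi_d(F_rY)\subseteq G^{\mathrm{tot}}_rB_d$ by \eqref{eq:tensor-evaluation-current-filtration}, so $\sigma^{(d)}_r$ is defined and linear. It therefore suffices to treat a single word.

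For one generator $\widetilde g=A_{i,s}$, Lemma~\ref{lem:coproduct-leading-term} gives $\Delta^{(d-1)}(A_{i,s})=P_d(A_{i,s})$ modulo $\widehat F_{s-1}A_d$. Applying \eqref{eq:completed-tensor-evaluation-filtration-bound}, the error lands in $G^{\mathrm{tot}}_{s-1}B_d$. Hence $\sigma^{(d)}_s\Psi_d(A_{i,s})$ equals $\sigma^{(d)}_s$ of $\sum_\nu 1\otimes\cdots\otimes\ev_{\alpha_{\mathrm{ev}}}(A_{i,s})\otimes\cdots\otimes1$. Proposition~\ref{prop:evaluation-leading-symbols} identifies the symbol of each tensor slot with $\Theta\rho(g)$. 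Under the injection \eqref{eq:graded-algebraic-tensor-inclusion}, this is $\Theta^{\otimes d}(P_d(g))=\Theta^{\otimes d}\Delta_0^{(d-1)}(g)$. For a word, $\Psi_d$ is an algebra homomorphism, being a composite of the continuous extensions of algebra maps, and the closed filtration $G^{\mathrm{tot}}$ is multiplicative. So $\sigma^{(d)}_r\Psi_d(\widetilde w)=\prod_\nu\sigma^{(d)}_{s_\nu}\Psi_d(\widetilde g_\nu)$ in the associated graded of $B_d$. The corresponding product of $\Theta^{\otimes d}\Delta_0^{(d-1)}(g_\nu)$ is $\Theta^{\otimes d}\Delta_0^{(d-1)}(w)$, because $\Theta^{\otimes d}$ and $\Delta_0^{(d-1)}$ are algebra homomorphisms. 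Independence of the lift then follows because the right side depends only on $v$.

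The main obstacle is that the symbols live in a completion. Products of infinite sums, and the multiplicativity of $\sigma^{(d)}$ on $G^{\mathrm{tot}}B_d$, must be justified. The identification of the product in $\bigoplus_pG^{\mathrm{tot}}_p/G^{\mathrm{tot}}_{p-1}$ with the product in $(\operatorname{gr}_G\Bbd)^{\otimes d}$ must also be justified. This is why the statement is phrased coefficientwise. I would fix a tensor PBW coefficient $c_M$ with $M$ of total $G$-degree $r$, together with a cutoff $L\ge\deg_{>0}(M)$. Lemma~\ref{lem:continuous-pbw-coordinates} shows that $c_M$ factors through the quotient modulo $T_L$. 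At that cutoff every factor is a finite combination of normal monomials, by the local finiteness of \eqref{eq:quadratic-evaluation-loop-degree-pairs} and \eqref{eq:root-sum-tail-convergence}. The comparison is then a finite computation in $B_0^{\otimes d}$. There, reordering identity-matrix factors only lowers the $G$-count by two, by \eqref{eq:identity-mode-heisenberg-relations}. So the top-$G$-degree coefficients of a product are computed from top-degree coefficients of the factors exactly as in $\operatorname{gr}_G$. Finally, the super signs in the graded tensor product must match those in $\Delta_0$, and I would check this parity bookkeeping slot by slot.
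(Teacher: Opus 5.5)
Your proposal is correct and follows essentially the paper's route. The paper likewise fixes $c_M$, uses Lemma~\ref{lem:continuous-pbw-coordinates} to reduce to a finite calculation at a cutoff, and then combines Lemma~\ref{lem:coproduct-leading-term} with Proposition~\ref{prop:evaluation-leading-symbols}, discarding terms of $G$-degree at most $r-1$; your reduction to single Yangian words and your explicit use of multiplicativity of $\sigma^{(d)}$ and of the inclusion \eqref{eq:graded-algebraic-tensor-inclusion} simply spell out what the paper uses implicitly when it asserts that the current-degree-$r$ component of the iterated coproduct is $\Delta_0^{(d-1)}(v)$.
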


\begin{proof}
Fix $M$. By Lemma~\ref{lem:continuous-pbw-coordinates}, $c_M$ is continuous. Expand a fixed polynomial representative of $v$, apply the coproduct and evaluation formulas, and then apply $c_M$.

Continuity of multiplication in the standard degreewise completion and the normal-ordering estimate \eqref{eq:normal-order-right-tail-description} imply that, for this fixed output coefficient, only bounded values of $\deg_{>0}$ can contribute. At every such bound the sums in the evaluation formulas and the affine-root sums in the coproduct contain only finitely many contributing loop indices. Thus, after applying $c_M$, the calculation is an ordinary finite calculation.

\begingroup\tolerance=1000\emergencystretch=3em
Lemma~\ref{lem:coproduct-leading-term} shows that the component of current degree $r$ in the iterated coproduct is $\Delta_0^{(d-1)}(v)$. Proposition~\ref{prop:evaluation-leading-symbols} shows that evaluation of this component has $G$-symbol $\Theta^{\otimes d}\Delta_0^{(d-1)}(v)$. Every remaining term has $G$-degree at most $r-1$. Applying $c_M$ proves \eqref{eq:tensor-evaluation-leading-coefficients}.\par\endgroup
\end{proof}

\begin{lemma}\label{lem:tensor-power-separation}
Let $\mathfrak a$ be a Lie superalgebra, $C$ an associative superalgebra, and $\vartheta:\mathfrak a\to C$ an injective even Lie homomorphism. Then the maps
\begin{equation}
U(\mathfrak a)\longrightarrow C^{\otimes d},\qquad
v\longmapsto\vartheta^{\otimes d}\Delta_0^{(d-1)}(v),\qquad
d=0,1,2,\ldots,
\label{eq:tensor-powers-joint-separation}
\end{equation}
are jointly injective; for $d=0$ the map is the counit.
\end{lemma}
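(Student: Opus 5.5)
We reduce the statement to a filtration argument on $U(\mathfrak a)$ and a symmetrization identity.

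\emph{Setup.} Let $U_k$ denote the PBW filtration of $U(\mathfrak a)$ by the number of Lie factors. We prove, by induction on $k$, that the restriction of the joint map $(\vartheta^{\otimes d}\Delta_0^{(d-1)})_{d\le k}$ to $U_k$ is injective. Since $U(\mathfrak a)=\bigcup_k U_k$, this suffices. Fix a homogeneous ordered basis of $\mathfrak a$. By the super PBW theorem, the symmetrization map $\beta:S(\mathfrak a)\to U(\mathfrak a)$ is a linear isomorphism with $\beta(S^k\mathfrak a)\oplus U_{k-1}=U_k$. Here $S$ means the supersymmetric algebra, exterior in odd directions. So every $v\in U_k$ can be written $v=\beta(s_k)+v'$ with $s_k\in S^k\mathfrak a$ and $v'\in U_{k-1}$.

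\emph{Key computation.} For $x_1,\dots,x_j\in\mathfrak a$, the iterated primitive coproduct gives
\[
\Delta_0^{(d-1)}(x_1\cdots x_j)=\sum_{\phi:\{1,\dots,j\}\to\{1,\dots,d\}}\pm\,(\text{ordered products of the }x_i\text{ with }\phi(i)=\nu)_{\nu=1,\dots,d}.
\]
Compose with the projection $\pi_d:C^{\otimes d}\to\vartheta(\mathfrak a)^{\otimes d}$ that keeps only tensor slots of the form $\vartheta(x)$. To make this well defined, choose a linear complement $C=\mathbb C1\oplus\vartheta(\mathfrak a)\oplus W$; if $1\in\vartheta(\mathfrak a)$, simply replace $C$ by $C\times\mathbb C$ via $c\mapsto(c,\epsilon)$, where $\epsilon$ is a character; it is simpler to first adjoin a formal unit so that $\vartheta(\mathfrak a)\cap\mathbb C1=0$.

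Only the maps $\phi$ that are bijections survive when $j=d$. So, for $v\in U_k$ and $d=k$,
\[
\pi_k\vartheta^{\otimes k}\Delta_0^{(k-1)}(v)=\vartheta^{\otimes k}\bigl(\mathrm{sym}(s_k)\bigr).
\]
Here $\mathrm{sym}:S^k\mathfrak a\to\mathfrak a^{\otimes k}$ is the super-symmetrization embedding. This holds because elements of $U_{k-1}$ produce at least one slot equal to $1$, or a product in a slot, and the projection kills such terms.

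The main obstacle is exactly this cleanliness: products of two or more $\vartheta$'s inside one slot may land partly in $\vartheta(\mathfrak a)$. To handle it, project instead through the quotient by $U_{\ge2}$-type terms in a lexicographic leading-term sense. The alternative I would try first is to avoid projection and use the counit-like splitting $U_{(1)}\subset U(\mathfrak a)$ only. Equivalently, note that $\Delta_0^{(k-1)}(v)$ followed by $(p_1)^{\otimes k}$, where $p_1:U(\mathfrak a)\to\mathfrak a$ is the PBW projection onto $\beta(S^1)$, equals $\mathrm{sym}(s_k)$, killing $U_{k-1}$. Then I need the statement that $\vartheta^{\otimes k}\Delta_0^{(k-1)}(v)=0$ and lower maps vanish together force this projection to vanish. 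I would achieve this by the induction: once $\vartheta^{\otimes d}\Delta_0^{(d-1)}(v)=0$ for all $d<k$, the lower-degree data vanish as well.

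\emph{Finish.} Since $\vartheta$ is injective, $\vartheta^{\otimes k}$ is injective on the algebraic tensor power. Since $\mathrm{sym}$ is injective in characteristic zero, vanishing of the degree-$k$ image forces $s_k=0$. Hence $v\in U_{k-1}$, and induction concludes. For $d=0$, the counit detects $U_0=\mathbb C$.
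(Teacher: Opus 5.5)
Your proposal has a genuine gap, and it sits in the case you defer to ``adjoin a formal unit''. Detecting the top symbol from the single map $T_k=\vartheta^{\otimes k}\Delta_0^{(k-1)}$ by a slotwise projection needs a linear map $p\colon C\to E$ with $p(1)=0$ and $p\circ\vartheta$ injective. Such a $p$ exists exactly when $1\notin\vartheta(\mathfrak a)$.

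In that case your argument does go through. The ``cleanliness'' obstacle you name is not a real one: for a monomial of length $j\le k$, every non-bijective assignment $\phi\colon\{1,\dots,j\}\to\{1,\dots,k\}$ leaves some slot empty. That term is therefore killed by $p(1)=0$, whatever the products in the other slots are.

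The real obstacle is $1\in\vartheta(\mathfrak a)$, and the paper needs precisely this case, since $\Theta(\kappa_{0,0})=c_0$ is a nonzero scalar. There $T_k$ alone cannot see the top symbol. Take $\mathfrak a=\mathbb C c$ even, $C=\mathbb C$ and $\vartheta(c)=1$; then $T_d(P(c))=P(d)$. So $v=c^k-k^k$ has $T_k(v)=0$, although its top symbol $c^k$ is nonzero.

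Your proposed remedies are not carried out. The map into $C\times\mathbb C$ via a ``character'' is not generally available, and in any case it changes the maps. For the unitization $C^+=\mathbb C e\oplus C$ you never explain why vanishing of all $T_d(v)$ in $C^{\otimes d}$ forces vanishing of the corresponding maps into $(C^+)^{\otimes d}$. Likewise, the sentence ``once $T_d(v)=0$ for $d<k$, the lower-degree data vanish as well'' is unsupported. Your induction hypothesis concerns elements of $U_{k-1}$, and you never produce such an element from $v$.

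The missing idea is to combine all the lower maps at once by inclusion--exclusion, as the paper does. For $S\subseteq\{1,\dots,k\}$, let $T_S(v)$ be $T_{|S|}(v)$ with units inserted in the positions outside $S$, and put $D_k(v)=\sum_{S}(-1)^{k-|S|}T_S(v)$. Expand $\Delta_0$ as a sum over slot assignments $\phi$. The $\phi$-term then carries the coefficient $\sum_{S\supseteq\operatorname{im}\phi}(-1)^{k-|S|}$, which is $1$ if $\phi$ is surjective and $0$ otherwise.

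Hence $D_k$ kills $U_{k-1}$ and sends $v$ to $\vartheta^{\otimes k}\mathrm{sym}(s_k)$, with no projection and no hypothesis on $1$. Since $D_k(v)$ is a linear combination of the $T_d(v)$ with $d\le k$, joint vanishing gives $s_k=0$ directly, with no induction needed. This is exactly what a correctly executed unitization would compute: the component of $(\vartheta^+)^{\otimes k}\Delta_0^{(k-1)}(v)$ with every slot in $C$ is $D_k(v)$. In the example above, $D_k$ is the $k$-th finite difference and gives $k!$ on $c^k$.
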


\begin{proof}
Let $0\ne v$ have top PBW degree $d$; the scalar case is detected by the counit. For $S\subseteq\{1,\ldots,d\}$ apply the map with $|S|$ tensor factors and insert units in the complementary positions; call the result $T_S(v)$. Consider
\begin{equation}
D_d(v)=\sum_{S\subseteq\{1,\ldots,d\}}(-1)^{d-|S|}T_S(v).
\label{eq:inclusion-exclusion-tensor-operator}
\end{equation}
Expanding the primitive coproduct assigns every factor of a PBW monomial to a tensor position. Inclusion--exclusion kills all assignments which do not use all $d$ positions. Hence \eqref{eq:inclusion-exclusion-tensor-operator} vanishes on PBW degree $<d$, while on the top symbol it is
\begin{equation}
x_1\cdots x_d\longmapsto\sum_{\sigma\in\mathfrak S_d}
\epsilon(\sigma;x)\vartheta(x_{\sigma(1)})\otimes\cdots\otimes\vartheta(x_{\sigma(d)}).
\label{eq:super-symmetrization-top-symbol}
\end{equation}
Here $\epsilon(\sigma;x)$ is the Koszul sign obtained by permuting the homogeneous elements $x_1,\ldots,x_d$. Before applying $\vartheta^{\otimes d}$, \eqref{eq:super-symmetrization-top-symbol} is the canonical super-symmetrization $s_d:S_{\mathrm s}^d(\mathfrak a)\to\mathfrak a^{\otimes d}$. If $q_d$ is the quotient map in the other direction, then $q_ds_d=d!\operatorname{id}$. Thus $s_d$ is injective in characteristic zero. The map $\vartheta^{\otimes d}$ is injective as a linear map. Consequently \eqref{eq:inclusion-exclusion-tensor-operator} detects the nonzero top PBW symbol of $v$, proving joint injectivity.
\end{proof}

\begin{proof}[Proof of Theorem~\ref{thm:pbw}]
\emph{Injectivity of $\Phi$.} Surjectivity of \eqref{eq:canonical-pbw-surjection} was established in Proposition~\ref{prop:graded-surjection}, after the identification of Proposition~\ref{prop:classical-presentation-isomorphism}. It remains to prove injectivity. Suppose that $0\ne v\in\ker\Phi$. Since $\Phi$ is graded, we may take $v$ homogeneous of current degree $r$. Choose a homogeneous polynomial lift $\widetilde v$ in the Yangian generators. The equality $\Phi(v)=0$ means exactly that
\begin{equation}
\widetilde v\in F_{r-1}\Y.
\label{eq:kernel-lift-filtration-drop}
\end{equation}
The standard augmentations send every Lie or Yangian generator to zero and commute with $\Phi$. Hence the counit of $v$ is zero. The separating tensor power in the next paragraph may therefore be taken with $d\ge1$.

Apply Lemma~\ref{lem:tensor-power-separation} to the injective Lie map $\Theta:\Tplus\to\operatorname{gr}_G\Bbd$. There is a $d$ such that
\begin{equation}
w=\Theta^{\otimes d}\Delta_0^{(d-1)}(v)\ne0.
\label{eq:nonzero-separated-classical-tensor}
\end{equation}
By Lemma~\ref{lem:continuous-pbw-coordinates}, tensor PBW coordinates separate $(\operatorname{gr}_G\Bbd)^{\otimes d}$. Since $w$ is homogeneous of total $G$-degree $r$, some tensor PBW monomial $M$ of total $G$-degree $r$ satisfies $c_M(w)\ne0$.

On the other hand, \eqref{eq:kernel-lift-filtration-drop} and \eqref{eq:tensor-evaluation-current-filtration} imply that $\Psi_d(\widetilde v)$ belongs to $G_{r-1}^{\mathrm{tot}}B_d$. Its degree-$r$ symbol is therefore zero. Lemma~\ref{lem:iterated-evaluation-symbols} now gives
\[
0=c_M\bigl(\sigma_r^{(d)}\Psi_d(\widetilde v)\bigr)=c_M(w),
\]
contradicting the choice of $c_M$. Thus $\ker\Phi=0$. Together with Proposition~\ref{prop:graded-surjection}, this proves \eqref{eq:pbw-graded-isomorphism}.

\emph{Spanning (completeness) of the ordered lifts.} The ordinary PBW theorem for Lie superalgebras gives the ordered supermonomial basis of $U(\Tplus)$. Let $y\in F_r\Y$. By surjectivity of $\Phi$, the degree-$r$ symbol of $y$ is a finite linear combination of the symbols of ordered lifted monomials of total current degree $r$. Subtracting this combination from $y$ leaves an element of $F_{r-1}\Y$. Induction on $r$, with $F_{-1}\Y=0$, gives a finite expansion of $y$ in the monomials \eqref{eq:ordered-filtered-lift-monomials}. Thus these monomials span $\Y$.

\emph{Linear independence of the ordered lifts.} Suppose that a finite linear combination of the monomials \eqref{eq:ordered-filtered-lift-monomials} vanishes, and choose the maximal total current degree occurring in that combination. Passing to the corresponding graded component and using injectivity of $\Phi$ gives a linear relation among distinct classical ordered PBW monomials of that degree. Their coefficients therefore vanish. Descending induction on current degree proves that every coefficient vanishes. Consequently, \eqref{eq:ordered-filtered-lift-monomials} is a basis.
\end{proof}

\appendix
\renewcommand{\theequation}{\thesection.\arabic{equation}}
\section{An explicit order and the PBW argument for \texorpdfstring{$\widehat{\mathfrak{sl}}(3|2)$}{affine sl(3|2)}}
\label{app:order}
We specialize to $m=3$, $n=2$ and $\hbar=1$. Thus
\[
p(1)=p(2)=p(3)=0,\qquad p(4)=p(5)=1,\qquad
\eta=\lambda=1,\qquad c_0=-\tfrac12.
\]
The ordered basis below makes the construction and the separation argument
of Theorem~\ref{thm:pbw} explicit in this case.

\subsection{A homogeneous basis and its order}\label{subsec:appendix-homogeneous-basis-order}
Put
\begin{equation}\label{app:diagonal}
\begin{aligned}
D_1&=E_{11}-E_{22},&D_2&=E_{22}-E_{33},\\
D_3&=E_{33}+E_{44},&D_4&=-E_{44}+E_{55}.
\end{aligned}
\end{equation}
The twenty off-diagonal matrix units and these four diagonal elements form
a homogeneous basis $\mathcal B_{\rm fin}$ of $\mathfrak{sl}(3|2)$.
There are twelve even and twelve odd elements: $E_{ab}$ is odd exactly
when one index lies in $\{1,2,3\}$ and the other in $\{4,5\}$.
Choose the following order of finite types and assign the labels $t=1,\ldots,24$:
\[
\underbrace{E_{ab}\ (a>b)}_{\text{lexicographic in }(a,b)},\quad
D_1,D_2,D_3,D_4,\quad
\underbrace{E_{ab}\ (a<b)}_{\text{lexicographic in }(a,b)}.
\]
Assign the label $t=25$ to the central type. By \eqref{eq:universal-central-extension-model} and \eqref{eq:central-differential-basis}, a basis of
$\Tplus$ is
\begin{equation}\label{app:basis}
\begin{aligned}
\mathcal B={}&\{Xz^ku^r:X\in\mathcal B_{\rm fin},\ k\in\mathbb Z,\ r\ge0\}\\
&\sqcup\{c\}\sqcup\{\kappa_{k,r}:k\in\mathbb Z,\ r\ge1\},\\
c&=\kappa_{0,0}=\overline{z^{-1}dz},\qquad
\kappa_{k,r}=\overline{z^{k-1}u^r\,dz}.
\end{aligned}
\end{equation}
All central elements are even. Give $Xz^ku^r$ and $\kappa_{k,r}$ the key
\begin{equation}\label{app:orderkey}
\bigl(\operatorname{sgn}(k),\ r,\ |k|,\ t\bigr),
\qquad -1<0<1,
\end{equation}
and give $c$ the key $(0,0,0,25)$. Compare keys lexicographically.
This specifies a total order on every element of $\mathcal B$.
Negative loop degrees precede zero loop degree, which precedes positive
loop degrees; within each sign, current degree, absolute loop degree and
the finite-type label resolve the order. For example,
\[
E_{43}z^{-1}u<E_{34}z,\qquad
E_{43}<E_{34}u,\qquad D_1u<D_1z.
\]
The ordinary super PBW basis of $U(\Tplus)$ consists of the finite products
\begin{equation}\label{app:monomials}
b_1^{d_1}\cdots b_s^{d_s},\qquad b_1<\cdots<b_s,
\end{equation}
with $d_i\ge0$ for even $b_i$ and $d_i\in\{0,1\}$ for odd $b_i$.
Their current degree is $\sum_i d_i\deg_u b_i$.

\subsection{Reordering, central terms and filtered lifts}\label{subsec:appendix-reordering-and-lifts}
For $K=k+\ell$ and $R=r+s>0$, the exact differential $d(z^Ku^R)$ gives
\begin{equation}\label{app:cocycle}
\begin{aligned}
\overline{z^Ku^{R-1}\,du}&=-\frac K R\kappa_{K,R},\\
\overline{z^\ell u^s\,d(z^ku^r)}
&=\frac{ks-r\ell}{r+s}\kappa_{k+\ell,r+s}.
\end{aligned}
\end{equation}
For $r=s=0$ the latter class is $k\delta_{k+\ell,0}c$.
Consequently \eqref{eq:universal-central-extension-bracket} computes every bracket between the basis elements in
\eqref{app:basis}. The rule in the enveloping algebra is
\[
ab=(-1)^{|a||b|}ba+[a,b].
\]
In the odd example $A=E_{34}z$ and $B=E_{43}z^{-1}u$, we have
$B<A$, $[E_{34},E_{43}]=D_3$ and $\str(E_{34}E_{43})=1$. Therefore
\begin{equation}\label{app:oddreorder}
[A,B]=D_3u+\kappa_{0,1},\qquad
AB=-BA+D_3u+\kappa_{0,1}.
\end{equation}
The product $BA$ is ordered, and the correction consists of basis elements
of current degree one. For an even example, $\str(D_1^2)=2$ and
$\overline{u\,dz}=\kappa_{1,1}$, so
\begin{equation}\label{app:evenreorder}
[D_1z,D_1u]=2\kappa_{1,1},\qquad
(D_1z)(D_1u)=(D_1u)(D_1z)+2\kappa_{1,1}.
\end{equation}
These calculations exhibit both the super sign and the differential-form
central correction used in the classical presentation.

Use the identification $\rho:\mathfrak s\to\Tplus$ of Proposition~\ref{prop:classical-presentation-isomorphism} to
express each element of $\mathcal B$ as a finite homogeneous Lie polynomial
in the classical generators. Replacing these generators by the Yangian
generators gives a filtered lift $\widehat b\in F_{\deg_u b}Y_\hbar$.
For instance,
\begin{equation}\label{app:lifts}
\widehat{E_{34}u}=x^+_{3,1},\quad
\widehat{E_{43}}=x^-_{3,0},\quad
\widehat{\kappa_{0,r}}=\sum_{i=0}^4h_{i,r}\quad(r\ge0).
\end{equation}
The last equality specifies a choice of lift: under $\rho$, the sum of the
five classical Cartan generators is $\kappa_{0,r}$.
The products $\widehat b_1^{d_1}\cdots\widehat b_s^{d_s}$ with the order and
exponents in \eqref{app:monomials} are the Yangian basis asserted by
Theorem~\ref{thm:pbw}. The following calculations illustrate its independence proof.

\subsection{Evaluation symbols and tensor separation}\label{subsec:appendix-evaluation-and-separation}
With $\mathcal J(z)=\sum_qj_qz^{-q}$, formulas \eqref{eq:theta-current-formula}--\eqref{eq:theta-central-basis-images} become
\begin{equation}\label{app:theta}
\begin{aligned}
\Theta(Xz^ku^r)
&=\sum_{q_1,\ldots,q_r\in\mathbb Z}
 j_{q_1}\cdots j_{q_r}X(k-q_1-\cdots-q_r),\\
\Theta(c)&=-\tfrac12,\qquad
\Theta(\kappa_{k,r})=-\tfrac12[z^{-k}]\mathcal J(z)^r.
\end{aligned}
\end{equation}
The sums use the normally ordered lifts prescribed before \eqref{eq:theta-current-formula}, with the
order in Lemma~\ref{lem:continuous-pbw-coordinates} on the evaluation algebra. For the degree-zero matrix
and identity modes, fix the identity mode before the matrix modes within
loop degree zero. In degree one,
\begin{equation}\label{app:symbols}
\begin{aligned}
\sigma_1\ev(x^+_{3,1})&=\sum_qj_qE_{34}(-q),\\
\sigma_1\ev(h_{0,1})
&=\sum_qj_q(-E_{11}-E_{55})(-q)-\tfrac12j_0,\\
\Theta(\kappa_{k,1})&=-\tfrac12j_k.
\end{aligned}
\end{equation}
Here $\ev=\operatorname{ev}_{\alpha_{\rm ev}}$, with arbitrary evaluation
parameter. For a cutoff $M$, a term in the first sum can survive only if
$q\le M$ and $-q\le M$, so only $-M\le q\le M$ contribute. This is the
local finiteness argument of Lemma~\ref{lem:evaluation-symbol-embedding} in a concrete instance.
The image of the bracket in \eqref{app:oddreorder} is
\[
\Theta(D_3u+\kappa_{0,1})=\sum_qj_qD_3(-q)-\tfrac12j_0,
\]
and that of $2\kappa_{1,1}$ in \eqref{app:evenreorder} is $-j_1$.
Thus the central corrections survive in the evaluation symbols with their
specified signs.

The coefficient of $j_0^rX(k)$ in $\Theta(Xz^ku^r)$ is $1$.
For the central image the coefficient of $j_0^r$ when $k=0$ is $-1/2$;
when $k\ne0$, the coefficient of $j_kj_0^{r-1}$ is $-r/2$.
The central images involve only the $j$ modes, and their weights
$\operatorname{wt}(j_q)=q$ distinguish the index $k$. These coordinates
show explicitly how Lemma~\ref{lem:evaluation-symbol-embedding} separates finite linear combinations of
our Lie basis.

Write $T_d=\Theta^{\otimes d}\Delta_0^{(d-1)}$ for $d\ge1$ and let $T_0$
be the counit. Primitivity of $c$ gives
\begin{equation}\label{app:centralpowers}
T_d(P(c))=P(-d/2)\,1^{\otimes d}\quad(d\ge1).
\end{equation}
For $P(c)=c(c+1/2)$, we obtain $T_0(P)=T_1(P)=0$ and
$T_2(P)=\tfrac12\,1\otimes1$. In general, if $P$ has degree $\ell$ and
leading coefficient $a_\ell$, the inclusion--exclusion operator in
Lemma~\ref{lem:tensor-power-separation} gives
\begin{equation}\label{app:finitedifference}
D_\ell(P(c))=
\left(\sum_{k=0}^\ell(-1)^{\ell-k}\binom\ell kP(-k/2)\right)
1^{\otimes\ell}
=a_\ell\ell!(-\tfrac12)^\ell1^{\otimes\ell}.
\end{equation}
Thus tensor separation detects every nonzero polynomial in the
current-degree-zero central generator.

For a mixed odd example, take $f=E_{43}$ and $e=E_{34}u$. Since $f<e$,
$v=fe$ is an ordered monomial of current degree one and PBW length two.
The primitive super coproduct gives
\begin{equation}\label{app:oddseparation}
\begin{aligned}
D_2(v)&=\Theta(f)\otimes\Theta(e)-\Theta(e)\otimes\Theta(f)\\
&=E_{43}(0)\otimes\sum_qj_qE_{34}(-q)
 -\sum_qj_qE_{34}(-q)\otimes E_{43}(0).
\end{aligned}
\end{equation}
The minus sign is the Koszul sign for interchanging two odd factors.
The tensor PBW coefficient indexed by
\begin{equation}\label{app:coordinate}
M=E_{43}(0)\otimes j_0E_{34}(0)
\end{equation}
is $1$, and its total $G$-degree is one. In $T_2(v)$ the two additional
terms $\Theta(fe)\otimes1$ and $1\otimes\Theta(fe)$ have zero $M$-coefficient,
so $c_M(T_2(v))=1$ as well. For the lift
$\widetilde v=x^-_{3,0}x^+_{3,1}$, Lemma~\ref{lem:iterated-evaluation-symbols} therefore yields
\begin{equation}\label{app:yangiancheck}
c_M\bigl(\sigma^{(2)}_1\Psi_2(\widetilde v)\bigr)=1.
\end{equation}
Lemma~\ref{lem:coproduct-leading-term} and Proposition~\ref{prop:evaluation-leading-symbols} give
$\Psi_2(F_0Y_\hbar)\subseteq G^{\rm tot}_0B_2$.
Equation \eqref{app:yangiancheck} consequently proves
$\widetilde v\notin F_0Y_\hbar$.

The general proof uses the same steps for a finite homogeneous polynomial
$v\in U(\Tplus)$ of current degree $r$. Its maximal PBW length $\ell$
selects the operator $D_\ell$ of Lemma~\ref{lem:tensor-power-separation}, whose top symbol is the injective
super-symmetrization. Some $T_d(v)$ therefore has a nonzero tensor PBW
coefficient of total $G$-degree $r$. For a putative kernel element the
counit vanishes, so $d$ can be chosen positive. If $\Phi(v)=0$, a polynomial lift lies
in $F_{r-1}Y_\hbar$; Lemma~\ref{lem:iterated-evaluation-symbols} identifies that coefficient with the
corresponding coefficient of $\sigma_r^{(d)}\Psi_d(\widetilde v)$, whereas
Lemma~\ref{lem:coproduct-leading-term} and Proposition~\ref{prop:evaluation-leading-symbols} put its evaluation in $G^{\rm tot}_{r-1}B_d$.
This is the contradiction proving injectivity of $\Phi$. Finally, the
ordinary super PBW basis \eqref{app:monomials} and induction on current
filtration degree give the ordered lifted basis for the chosen order.

\par\medskip
\noindent\textbf{Acknowledgement.} This work was supported by the Russian Science Foundation, grant \mbox{26-11-00115}, and carried out at the Center of Fundamental Mathematics, Moscow Institute of Physics and Technology.

\end{document}